\newtheorem{theorem}{Theorem}
\newtheorem{corollary}{Corollary}
\newtheorem{lemma}{Lemma}
\newtheorem{proposition}{Proposition}
\newtheorem{condition}{Condition}
\newtheorem{remark}{Remark}
\newcommand{\R}{\mathbb{R}}
\newcommand{\G}{\mathscr{G}}
\newcommand{\sS}{\mathscr{S}}
\newcommand{\Z}{\mathbb{Z}}
\newcommand{\N}{\mathbb{N}}
\newenvironment{enumerate*}%
\let\dolarger\relsize} 
\def\dolarger#1{\larger[#1]}} 
\newcommand*\@@bigtimes[2]{\vphantom{\prod} 
  \vcenter{\hbox{\dolarger{4}$\m@th#1\mkern-2mu\times\mkern-2mu$}}} 
\newcommand*\bigtimes{\mathop{\mathpalette\@@bigtimes\relax}\displaylimits} 
\begin{document}

\title{\textsc{Bernstein--von Mises theorems for \\ time evolution equations}}

\author{\\ {Richard Nickl} \\ \\  \textsc{University of Cambridge}}

\maketitle

\begin{abstract}
We consider a class of infinite-dimensional dynamical systems driven by non-linear parabolic partial differential equations with initial condition $\theta$ modelled by a Gaussian process `prior' probability measure. Given discrete samples of the   state of the system evolving in space-time, one obtains updated `posterior' measures on a function space containing all possible trajectories. We give a general set of conditions under which these non-Gaussian posterior distributions are approximated, in Wasserstein distance for the supremum-norm metric, by the law of a Gaussian random function. We demonstrate the applicability of our results to periodic non-linear reaction diffusion equations 
\begin{align*}
\frac{\partial}{\partial t} u - \Delta u &= f(u) \\
u(0) &= \theta
\end{align*}
where $f$ is any smooth and compactly supported reaction function. In this case the limiting Gaussian measure can be characterised  as the solution of a time-dependent Schr\"odinger equation with `rough' Gaussian initial conditions whose covariance operator we describe.
\end{abstract}

\tableofcontents

\section{Introduction}

\subsection{Bayesian inference in dynamical systems}

Denote by $L^2(\Omega)$ the Hilbert space of square-integrable vector fields over a bounded domain $\Omega \subset \R^d$. For  initial condition $\theta = u(0)$, consider states $u_\theta(t)$ at times $t>0$ of a dynamical system evolving in $L^2(\Omega)$. Examples we have in mind include the \textit{Navier-Stokes}, \textit{reaction-diffusion} or \textit{McKean-Vlasov} systems \cite{T97, R01, CF88, S91}, where $u_\theta$ models the velocity field of an incompressible fluid, the concentration of a chemical substance, or the distribution of interacting particles, respectively. The infinitesimal dynamics of such systems are described by a non-linear partial differential equation (PDE)
\begin{equation}\label{genpde}
\frac{\partial u}{\partial t} = \Delta u  +F(u)
\end{equation}
where $\Delta$ is the Laplacian and $F$ is a functional modelling the underlying non-linearity.

\smallskip

In scientific applications, the initial condition $\theta$ is uncertain and moreover, physical observations are necessarily discrete and include statistical error. For instance, following the paradigm of probabilistic numerics \cite{D88, CDRS09, S10, BOGOS19}, an experimenter may sample the state $u_\theta$ of the system at positions $(t_i, \omega_i)_{i=1}^N$ drawn uniformly and independently at random from the time-space cylinder $[0,T] \times \Omega$, with $T>0$ a fixed time horizon. One thus observes the vector $Z^{(N)}=(Y_i,t_i, \omega_i)_{i=1}^N$ from the regression equation
\begin{equation}\label{data}
Y_i = u_\theta(t_i, \omega_i) + \varepsilon_i, ~i=1, \dots, N,~N \in \mathbb N,
\end{equation}
where the $\varepsilon_i \sim \mathcal N(0,I)$ are independent and identically distributed (iid) Gaussian random noise. The infinite product probability measure describing the law of the sequence $Z^{(\infty)}$ arising from initial condition $\theta$ will be denoted by $P_{\theta}^\mathbb N$.

\smallskip

The uncertainty about the state $\theta$ of the system before measurements are taken can be updated to a \textit{posterior distribution} on the $u_\theta$'s given $Z^{(N)}$, by applying the rules of conditional probability. This follows seminal ideas of Laplace (Chapter VI in \cite{L1812}) and nowadays is called \textit{Bayesian inference} \cite{GV17} or sometimes more specifically -- when considering dynamical systems in applied sciences -- `data assimilation' or `filtering' \cite{EVvL22, RC15, LSZ15, MH12, K03}, where (\ref{data}) specifically corresponds to `deterministic' data assimilation without `model error'. In infinite-dimensional settings where $\theta$ is a function, a typical approach is to represent the uncertainty about the initial condition by a Gaussian random field $(\theta(x): x \in \Omega)$ whose mean function is based on past knowledge (or in absence of it, equals zero) and whose covariance is an inverse power of the Laplace operator on $\Omega$. If we denote by $\Pi$ the induced `prior' probability distribution on $\theta$, the updating step (i.e., Bayes' theorem) generates the posterior Gibbs' probability distribution for $\theta|Z^{(N)}$ of the form
 \begin{equation} \label{post}
d\Pi(\theta|Z^{(N)}) \propto \exp \Big\{-\frac{1}{2} \sum_{i=1}^N|Y_i-u_\theta(t_i, \omega_i)|^2 \Big\} d\Pi(\theta),~~\theta \in L^2(\Omega).
\end{equation}
We also obtain an update for the whole dynamical system $u_\theta(t)$ whose marginal time distributions on $L^2(\Omega)$ are given by the image measures
\begin{equation} \label{imagemeasure}
\hat \Pi_{t,N} = Law (u_\theta(t)), ~\theta \sim \Pi(\cdot|Z^{(N)}), ~t \ge 0.
\end{equation}
For the last observation times $t_{(N)} = \max_{i \le N} t_i \in (0,T],$ the preceding laws are sometimes called \textit{filtering distributions}; for $t>t_{(N)}$ they can be used to forecast future states of the system, while for $t<t_{(N)}$ the are referred to as `smoothing distributions', see, e.g., p.xii in \cite{LSZ15} or p.173 in \cite{RC15} for this terminology from the data assimilation literature. Various iterative algorithms exist that aim to compute these posterior measures by numerical (e.g., MCMC, or particle filter) methods \cite{S10, CRSW13, HSV14, RC15, LSZ15, CHSV24, GW24}. The resulting inferences are widely used in scientific application areas, but statistical theory for the performance of such methods in non-linear settings remains elusive.

\smallskip

The distributions $\hat \Pi_{t,N}$ are random probability measures in function space that involve the non-linearity of the underlying evolution operator both in the Gibbs' measure (\ref{post}) and in the push-forward (\ref{imagemeasure}), and have no simple closed form representation.  A first sanity check for this Bayesian approach would be to establish its `posterior consistency' \cite{GV17}; namely that $\hat \Pi_{t,N}$ converges in an appropriate sense to Dirac measure $\delta_{u_{\theta_0}}$ at the state $u_{\theta_0}(t)$ arising from `ground truth' initial condition $\theta_0$, at least as the signal to noise ratio increases (i.e., as sample size $N \to \infty$), and with high $P_{\theta_0}^\mathbb N$-probability. For the $2d$-Navier-Stokes model such a result was proved recently in \cite{NT23} -- see Remark \ref{template} for more discussion.

In this article we aim to further advance our understanding of the statistical behaviour of the random measures $\hat \Pi_{t,N}$ for large sample sizes $N \to \infty$. Specifically, following another idea of Laplace, we investigate whether the non-Gaussian posterior measures $\hat \Pi_{t,N}$ are perhaps \textit{approximately} Gaussian, a phenomenon that goes by the name of Bernstein-von Mises (BvM) theorem in the mathematical statistics literature. Such results hold in `regular' finite-dimensional statistical models  \cite{vdvaart1998} but are more intricate in high- or infinite-dimensional settings \cite{F99, CN13, CN14, GV17, NP23}. We will devise a template tailored to dynamical systems of parabolic type (\ref{genpde}) that allows to show that in a sense to be made precise, uniformly in fixed time windows, and with high $P_{\theta_0}^\mathbb N$-probability,
\begin{equation} \label{pitg}
\hat \Pi_{t,N} \approx \mathcal N\big(u_{\tilde \theta_N}, \mathcal C_t/N\big), t>0,
\end{equation}
where $\tilde \theta_N$ is the posterior mean vector and where the limit law $\mathcal N(0,\mathcal C_t)$ can be characterised by the solution at time $t$ of a \textit{linear stochastic} PDE with certain `rough' Gaussian initial conditions.  We demonstrate how the theory works for a concrete nonlinear reaction-diffusion system and discuss structurally similar non-linear parabolic PDEs (\ref{genpde}) in Remark \ref{template}.

These Gaussian approximations will be shown to hold in the uniform norm over $\Omega$, and at the  convergence rate $1/\sqrt N$. This is possible by first proving a Gaussian approximation in weak `Schwartz'-type topologies for the initial conditions $\theta$ (partly following ideas from \cite{CN13, CN14, N20} in different models), and then using strong smoothing properties of the semigroup underlying (\ref{genpde}) at positive times $t>0$. Some discussion of the last point and its relationship to non-existence results \cite{F99} for infinite-dimensional Bernstein-von Mises theorems can be found in Remark \ref{clever}.

\subsection{Asymptotics for posterior measures in reaction-diffusion equations}

We let $\Omega = [0,1]^d$ denote the $d$-dimensional torus and consider periodic solutions $u=u_\theta$ to the  PDE
\begin{align} \label{evol} 
\frac{\partial}{\partial t} u &= \Delta u + f(u), ~~\text{ on } (0,\infty) \times \Omega, \\
u(0)&=\theta,~~\text{ on } \Omega, \notag
\end{align}
where $\Delta$ is the Laplacian and $f$ is a given non-linear `reaction' term that is smooth (infinitely differentiable), compactly supported and time-independent, acting on $u$ by composition $f(u)=f \circ u$ point-wise, $(f \circ u)(t,\omega) = f(u(t,\omega)), t>0, \omega \in \Omega$. The unique solutions of this PDE induce a dynamical system $(u_\theta(t,\cdot): t>0)$ evolving in $L^2(\Omega)$, see \cite{T97, R01}. In applications the initial condition $\theta$ may represent the distribution of some substance and (\ref{evol}) describes its evolution over time where the non-linear reaction term models the amount of the substance that is created or destroyed in dependence $f(u(t))$ of the current state $u(t)$. For the proofs we will assume the dimension of the state space $\Omega=[0,1]^d$ to be $d \le 3$ and we also only consider scalar $u$ taking values in $\R$ rather than general vector fields $u: \Omega \to \R^d$. Extensions to $d \ge 4$ are mostly of a technical nature (involving Schauder theory as in Sec.~5.2 of \cite{M89}, rather than energy estimates) while coupled systems of equations could be dealt with by mostly notational changes as long as  $f =\nabla F$ is a smooth and compactly supported gradient vector field.

\smallskip

We model the uncertainty about the initial condition $\theta$ of the system by a $\gamma$-regular Gaussian random field $(\theta(x): x \in \Omega)$ whose prior $Law(\theta)$ on the space $L^2_0(\Omega) = L^2(\Omega) \cap \{h: \int_\Omega h=0\}$ arises from
\begin{equation}\label{prior}
\theta \sim \Pi = \mathcal N(0, \rho^2(-\Delta)^{-\gamma}),~~ \gamma>1+d/2,~ \rho>0,
\end{equation}
see Condition \ref{priorgen} for details. The choice of $\rho$ provides necessary prior regularisation to prevent the nonlinearities inherent in the posterior Gibbs measure (\ref{post}) to behave erratically at `low temperatures' $N \to \infty$. A way that achieves this exhibited in \cite{MNP21} takes the form
\begin{equation}\label{ronacher}
\rho=\rho_N=1/(\sqrt N \delta_N), \text{ where } \delta_N \simeq N^{-\gamma/(2\gamma+d)}.
\end{equation}
The posterior distribution $\Pi(\theta|Z^{(N)})$ and its pushforward $\hat \Pi_{t,N}$ then arise from data (\ref{data}) as in (\ref{post}) and (\ref{imagemeasure}), with dynamical system described by the PDE (\ref{evol}) for any fixed $f$ that is infinitely differentiable and compactly supported. 

\smallskip

Our main Theorem \ref{ganzwien} will show that the (non-Gaussian) posterior measures $\hat \Pi_{t,N}$ are approximated by a Gaussian law, as announced in (\ref{pitg}). This approximation will hold as $N \to \infty$, and in $P_{\theta_0}^\mathbb N$-probability, with ground truth initial condition $\theta_0$ that generated the data (\ref{data}). The most delicate step in the identification of the limit distribution is the  construction of a mean zero Gaussian random field  $(\vartheta(x): x \in \Omega)$ whose covariance is related to an appropriate inverse of the underlying `Fisher information' operator of the statistical model (see, \cite{vdvaart1998} or Ch.3 in \cite{N23} for more on this notion). We will show that this process indeed exists for the reaction-diffusion system (\ref{evol}) and that the law $\mathcal N_{\theta_0}$ it induces defines a Gaussian Borel measure in the negative Sobolev space $H^{-a}$ whenever $a>1+d/2$. Then let $u_{\theta_0}$ be the solution of (\ref{evol}), write $f'=df/dx$, and consider the Gaussian process $U$ over $(0,T] \times \Omega$ obtained from the unique weak solution of the following \textit{linear} time-dependent Schr\"odinger equation with random initial condition:
\begin{align} \label{linshow}
\frac{\partial}{\partial t} U(t,\cdot) - \Delta U(t, \cdot) - f'(u_{\theta_0}(t,\cdot)) U(t, \cdot) &= 0 \text{ on } (0,\infty) \times \Omega
\notag \\
U(0, \cdot) &= \vartheta \sim \mathcal N_{\theta_0}.
\end{align}
Even though the initial condition $\vartheta$ is not point-wise defined as a function (almost surely), our proofs will imply that the weak solutions $U(t, \cdot)$ exist and almost surely define continuous functions on $\Omega$ for $t >0$. This is related to the time-smoothing properties of the parabolic solution operator underlying (\ref{linshow}) and the fact that the null space of the Schr\"odinger operator $\Delta + f'(\theta_0)$ governing the `explosion' at time $t=0$ will be seen to be finite-dimensional, see also Remark \ref{clever}. 

\smallskip

We will assume that the true initial condition $\theta_0$ (but not the prior model) is smooth to simplify the statement of the following result. But this restriction is not necessary and can be weakened to sufficient Sobolev-regularity of $\theta_0$. Fix any time window $0<t_{min} < t_{max}<\infty$ and define the Banach space 
\begin{equation} \label{calc}
\mathscr C := C\big([t_{\min}, t_{\max}], C(\Omega)\big),~~\|v\|_{\mathscr C} := \sup_{t\in[t_{\min}, t_{\max}], x \in \Omega} |v(t,x)|,
\end{equation}
of continuous maps on $[t_{\min}, t_{\max}] \times \Omega$. Denote by $\mathscr W_1$ (equal to $W_{1, \mathscr C}$ in (\ref{wassdef}) below) the $1$-Wasserstein distance on the space of probability measures on $\mathscr C$. 

\begin{theorem}\label{ganzwien}
Let $\mu_{N}= \mu(\cdot|Z^{(N)})$ be the conditional law on $\mathscr C$ of the stochastic process $$\Big\{\sqrt N (u_\theta(t,x) - u_{\tilde \theta_N}(t,x))|Z^{(N)}: t \in [t_{\min}, t_{\max}], x \in \Omega\Big\}$$ where $\theta \sim \Pi(\cdot|Z^{(N)})$ arises from posterior (\ref{post}) with data (\ref{data}) in the reaction-diffusion system (\ref{evol}) for $f: \R \to \R$ that is infinitely differentiable and compactly supported, prior $\Pi=\Pi_N$ in (\ref{prior}) for $\rho$ as in (\ref{ronacher}), integer $\gamma>2+3d$, $d \le 3$, and where $\tilde \theta_N = E^\Pi[\theta|Z^{(N)}]$ is the posterior mean in $L_0^2(\Omega)$. Denote by $\mu$ the law in $\mathscr C$ of the Gaussian random function arising from the unique weak solution $U$ to the PDE (\ref{linshow}) with initial condition $\vartheta \sim \mathcal N_{\theta_0}$ and smooth $\theta_0 \in L^2_0$. Then we have as $N \to \infty$
$$\mathscr W_1(\mu_{N}, \mu) \to^{P_{\theta_0}^\mathbb N} 0 ~~\text{ as well as }~~\sqrt N (u_{\tilde \theta_N} - u_{\theta_0}) \to_{\mathscr C}^d \mu.$$
\end{theorem}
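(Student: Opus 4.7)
The plan is to reduce the first claim to a Bernstein--von Mises statement for the marginal posterior on the initial condition $\theta$ in a weak (negative Sobolev) topology, and then to transport this approximation through the linearised parabolic solution operator, using the smoothing of (\ref{linshow}) on $[t_{\min},t_{\max}]$ to upgrade the conclusion from $H^{-a}$ to the uniform norm on $\mathscr C$. The second claim will then follow by the same linearisation applied to a standard expansion of the posterior mean via the efficient influence function.

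First, I would establish that, for some $a>1+d/2$,
\[W_{1,H^{-a}}\big(\mathcal L(\sqrt N(\theta-\tilde\theta_N)\,|\,Z^{(N)}),\,\mathcal N_{\theta_0}\big)\ \to\ 0\quad\text{in }P_{\theta_0}^{\mathbb N}\text{-probability.}\]
Following the template of \cite{CN13,CN14,N20}, this combines (i) posterior contraction at rate $\delta_N$, already available from \cite{MNP21} under the scaling (\ref{ronacher}); (ii) a second-order expansion of the log-likelihood $\ell_N(\theta)=-\tfrac12\sum_i|Y_i-u_\theta(t_i,\omega_i)|^2$ around $\theta_0$ whose quadratic part is the empirical Fisher information operator $\mathbb I_{\theta_0}$ built from $D_{\theta_0}u$; and (iii) a Gaussian change-of-measure argument aligning the prior covariance with the mode of the quadratic approximation. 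The inverse operator $\mathbb I_{\theta_0}^{-1}$, applied to the i.i.d.\ score sum, produces the Gaussian law $\mathcal N_{\theta_0}$ prescribed as initial datum in (\ref{linshow}).

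Next, I would linearise the forward map. The Fréchet derivative $V=V_\theta[h]$ of $\tilde\theta\mapsto u_{\tilde\theta}$ at $\theta$ in direction $h$ solves
\[\partial_t V - \Delta V - f'(u_\theta(t,\cdot))V = 0,\qquad V(0,\cdot)=h,\]
so Taylor expansion gives $u_\theta-u_{\tilde\theta_N}=V_{\tilde\theta_N}[\theta-\tilde\theta_N]+R_N(\theta)$. Standard parabolic smoothing and the spectral theory of the self-adjoint Schrödinger operator $\Delta+f'(u_{\theta_0})$ (discrete spectrum, finite-dimensional kernel) imply that $h\mapsto V_{\theta_0}[h]$ extends to a bounded linear map $H^{-a}\to\mathscr C$. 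Posterior contraction gives $\tilde\theta_N\to\theta_0$ in a Sobolev norm strong enough that $f'(u_{\tilde\theta_N})\to f'(u_{\theta_0})$ uniformly on $[t_{\min},t_{\max}]\times\Omega$, whence $V_{\tilde\theta_N}[\cdot]$ is close to $V_{\theta_0}[\cdot]$ in operator norm on $H^{-a}\to\mathscr C$. Pushing the $H^{-a}$-BvM forward through this linear map yields the desired $\mathscr W_1$-approximation of $\mathcal L(\sqrt N V_{\tilde\theta_N}[\theta-\tilde\theta_N]\mid Z^{(N)})$ by $\mu$ in $\mathscr C$.

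The main technical obstacle is uniform control of the quadratic remainder: one must show $\sqrt N\,\|R_N(\theta)\|_{\mathscr C}=o_{P}(1)$ under the posterior. A second-order analysis of $\theta\mapsto u_\theta$ based on the variational equation with source $f''(u)V^2$ gives $\|R_N\|_{\mathscr C}\lesssim\|\theta-\tilde\theta_N\|_{H^s}^2$ for an index $s$ large enough to dominate the pointwise non-linearity via Sobolev embedding. Interpolating between the $L^2$-contraction rate $\delta_N$ and the Sobolev ball bound $\|\theta\|_{H^\sigma}=O_P(1)$ (a by-product of the contraction proof) yields $\|\theta-\tilde\theta_N\|_{H^s}=O_P(\delta_N^\kappa)$ for suitable $\kappa<1$, and the assumption $\gamma>2+3d$ is precisely what forces $\sqrt N\delta_N^{2\kappa}\to 0$ in the relevant regime. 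Combined with the previous step this proves the first assertion. For the second assertion, the same Taylor expansion around $\theta_0$ together with the efficient-influence-function expansion $\tilde\theta_N-\theta_0\approx \mathbb I_{\theta_0}^{-1}\bigl(N^{-1}\sum_i\varepsilon_i D_{\theta_0}u(t_i,\omega_i)\bigr)$ gives a CLT for $\sqrt N(\tilde\theta_N-\theta_0)$ in $H^{-a}$ with limit $\mathcal N_{\theta_0}$, and applying the continuous linear map $V_{\theta_0}[\cdot]:H^{-a}\to\mathscr C$ transfers this to $\sqrt N(u_{\tilde\theta_N}-u_{\theta_0})\to^d\mu$ in $\mathscr C$.
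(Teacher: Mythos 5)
Your overall plan --- a Bernstein--von Mises theorem for $\theta$ in a negative Sobolev topology, followed by a push-forward through the linearised parabolic solution operator using smoothing at positive times --- matches the paper's structure, and the treatment of the CLT for the posterior mean at the end is also essentially the same. However, there is a genuine gap in the way you transport the weak-topology BvM to $\mathscr C$.

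You Taylor-expand $u_\theta-u_{\tilde\theta_N}=V_{\tilde\theta_N}[\theta-\tilde\theta_N]+R_N$ around the \emph{random} point $\tilde\theta_N$ and then assert that $V_{\tilde\theta_N}\to V_{\theta_0}$ in operator norm from $H^{-a}\to\mathscr C$ because $f'(u_{\tilde\theta_N})\to f'(u_{\theta_0})$ uniformly. That implication is not valid: the gain of roughly $a+d/2$ derivatives needed to carry $H^{-a}$ (with $a=k+2>3d+2$) into $C(\Omega)$ relies on the bootstrapping of Lemma~\ref{bootcamp}, which iterates Proposition~\ref{fwdsmooth} and requires the potential to possess many bounded space derivatives (the paper imposes $V\in C^{1,\infty}$). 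A Duhamel argument for the difference $V_{\tilde\theta_N}-V_{\theta_0}$ with source $(f'(u_{\tilde\theta_N})-f'(u_{\theta_0}))V_{\theta_0}[h]$ needs $f'(u_{\tilde\theta_N})-f'(u_{\theta_0})$ to be small not just in $L^\infty$ but in a space $C^b$ with $b$ of order $a+d/2$. Since $\tilde\theta_N$ only lies in $H^{\bar\gamma}$ with $\bar\gamma<\gamma-d/2$, and $\gamma$ is only assumed $>2+3d$, the regularity of $u_{\tilde\theta_N}$ falls short of the $a+d/2\gtrsim 7d/2$ derivatives one would need, so the operator-norm convergence does not follow and cannot obviously be repaired under the stated hypotheses.

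The paper avoids this problem altogether by linearising directly around the fixed, smooth $\theta_0$: $Du_{\theta_0}$ is a deterministic, $N$-independent operator with potential $f'(u_{\theta_0})\in C^{1,\infty}$, so it is continuous from $H^{-k-2}$ to $\mathscr C$ for any $k$, and the entire $\mathscr W_1$-approximation step reduces to bounding a single quadratic remainder
$\sqrt N\,E^\Pi\|u_\theta-u_{\tilde\theta_N}-Du_{\theta_0}[\theta-\tilde\theta_N]\|_{\mathscr C}$.
That remainder is then controlled by interpolating (in the $u$-variable) between an $L^2$-quadratic Taylor bound (Theorem~\ref{linrob}) and an $H^{\bar\gamma}$-bound from posterior contraction, followed by the posterior moment bounds (\ref{meanreg}); your alternative $\sup$-norm second-order estimate is plausible but would still have to be integrated against the posterior, which is essentially the same interpolation. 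So the fix is simply to expand around $\theta_0$ from the start --- there is then no need to compare $V_{\tilde\theta_N}$ and $V_{\theta_0}$ as operators, and the key smoothing lemma applies verbatim.

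Finally, on the first stage: your sentence ``The inverse operator $\mathbb I_{\theta_0}^{-1}$ ... produces the Gaussian law $\mathcal N_{\theta_0}$'' glosses over the fact that $\mathbb I_{\theta_0}^*\mathbb I_{\theta_0}$ is not invertible on $L^2$; the paper inverts the modified operator $\mathcal I=\Delta\,\mathbb I_{\theta_0}^*\mathbb I_{\theta_0}$ (Condition~\ref{gemol}F and Theorem~\ref{raf}), which is what makes $\mathcal N_{\theta_0}$ well defined on $H^{-a}$ for $a>1+d/2$ but not on $L^2$. This is not a gap in your Theorem~\ref{ganzwien} argument per se, but it is an essential point for the weak-topology BvM step that you cite.
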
 

From convergence in law $\to ^d$ in the space $\mathscr C$ one deduces in particular that $\sqrt N(u_{\tilde \theta_N}-u_{\theta_0})$ is uniformly tight in $\mathscr C$, so the theorem implies that the updated posterior mean estimates $u_{\tilde \theta_N}(t)$ are concentrated near the `true'  dynamical system $u_{\theta_0}(t)$, and that their uniform $\|\cdot\|_{\mathscr C}$ fluctuations at a $\sqrt N$ scale are approximately Gaussian.

\smallskip

Theorem \ref{ganzwien} has various important applications that we only outline briefly here. One is to uncertainty quantification and the frequentist validity of posterior credible bands which are random subsets $C_N \subset \mathscr C$ of posterior probability $1-\alpha$ for some fixed $0<\alpha<1$.  Theorem \ref{ganzwien} and arguments as in the proof of Theorem 7.3.21 in \cite{GN16} can be combined to show that appropriate $C_N$ are of diameter $O_{P_{\theta_0}^\mathbb N}(1/\sqrt N)$ and satisfy 
\begin{equation} \label{cbandlim}
P_{\theta_0}^\mathbb N\big((u_{\theta_0}(t,x): t \in [t_{\min}, t_{\max}], x \in \Omega) \in C_N\big) \to 1-\alpha,~\text{ as } N \to \infty,
\end{equation}
see Corollary 1.3 in \cite{KN25} for how to prove such a result in a related setting.

\smallskip

Another interesting application our our results concerns guarantees for posterior computation: When Gaussian approximation theorems such as Theorem \ref{ganzwien} hold, they furnish a strategy to prove the existence of polynomial-time computational algorithms based on MCMC \cite{BC09, HSV14, NW24, N23}, overcoming potential computational hardness barriers \cite{BMNW23}. We note that the hypothesis of a Gaussian approximation is also used in recent work on convergence guarantees for Kalman-type filters in non-linear settings \cite{CHSV24}.

\subsection{Notation and preliminaries}

For real numbers $a,b$, we write $a \lesssim b$ whenever $a \le Cb$ for some fixed constant $C>0$, and $a \simeq b$ if $a \lesssim b, b \lesssim a$. We write $Z \sim \mu$ when a random variable $Z$ has law $\mu$, write $\to^P$ for convergence in probability, $\to^d$ for convergence in distribution, and use the standard $O_P, o_P$ notation for stochastic orders of magnitude, see \cite{vdvaart1998}.

For $1 \le p \le \infty$ and $(\mathscr Z, \mathcal A, \mu)$ a measure space, the $L^p(\mu)$-spaces are defined in the usual way as all measurable functions $H$ on $\mathscr Z$ such that $|H|^p$ is $\mu$-integrable, or essentially bounded if $p=\infty$. When $\mathscr Z$ is a subset of $\R^d$ we take $\mu$ to be Lebesgue measure on the Borel sets, unless specified otherwise. For $X$ a normed linear space, we define the function space $L^p([0,T],X)$ of measurable maps $H$ from $[0,T] \to X$ such that $\|H(t)\|_{X}$ lies in $L^p([0,T])$. We also define the space $C([0,T], X)$ of continuous maps from $[0,T] \to X$ normed by the supremum norm $\sup_{0<t<T}\|H(t)\|_X$ as well as the spaces of weakly differentiable maps
\begin{equation*}
C^1([0,T], X) = \Big\{H: \|H\|_{C^1([0,T], X)}:=\sup_{0<t<T}\|H(t)\|_{X} +  \textrm{ess}\sup_{0<t<T}\|H'(t)\|_{X}]<\infty \Big\},
\end{equation*}
\begin{equation*}
H^1([0,T], X) = \Big\{H: \|H\|^2_{H^1([0,T], X)}:=\int_0^T\|H(t)\|^2_{X}dt + \int_0^T\|H'(t)\|^2_{X}dt<\infty \Big\}.
\end{equation*}
For $\Omega = [0,1]^d$ the $d$-dimensional torus (with opposite points identified), $C(\Omega)$ denotes the Banach space of bounded continuous functions defined on $\Omega$, equipped with supremum norm $\|\cdot\|_\infty$, and $C^\gamma(\Omega)$ denotes the usual spaces of functions defined on $\Omega$ whose partial derivatives up to order $\gamma \in \mathbb N$ are bounded functions; for $\gamma \notin \mathbb N$ these are the H\"older spaces, and $C^\infty(\Omega) = \cap_{\gamma >0} C^\gamma(\Omega)$ denotes the set of smooth (infinitely-differentiable) functions defined on $\Omega$. We also introduce the spaces
\begin{equation} \label{notsosmooth}
C^{1,\infty}(\Omega) \equiv \cap_{b>0} C^1([0,T], C^b(\Omega)).
\end{equation}
Further define the Sobolev spaces $H^\gamma(\Omega), \gamma \ge 0,$ of functions whose weak partial derivatives up to order $\gamma$ lie in $L^2(\Omega)$. The norms $\|\cdot\|_{C^\gamma}, \|\cdot\|_{H^\gamma}$ on $C^\gamma, H^\gamma, \gamma \in \mathbb N,$ are then given by $\|H\| + \sum_{|\alpha|=\gamma} \|D^\alpha H\|,$ where $D^\alpha$ is the (weak) partial differential operator for multi-index $\alpha$ and the norms $\|\cdot\|$ equal $\|\cdot\|_\infty$ or $\|\cdot\|_{L^2}$, respectively. 

The Sobolev spaces $H^\gamma$ carry equivalent sequence space norms 
\begin{equation}\label{seqnorm}
\|h\|_{h^\gamma}^2 = \sum_{j \ge 0} (1+\lambda_j)^{\gamma} |\langle e_j, h \rangle_{L^2}|^2,
\end{equation}
where $\langle \cdot, \cdot \rangle_{L^2}$ is the inner product of $L^2(\Omega)$, and the $e_j$ are the $L^2(\Omega)$-orthonormal eigenfunctions of the periodic Laplacian $\Delta$ for (negative) eigenvalues $\lambda_j$. Concretely we have $e_0=1, \lambda_0=0$, and
\begin{equation} \label{weyl}
\Delta e_j = - \lambda_j e_j, ~j \ge 1,~~ 0<\lambda_{j} \le \lambda_{j+1} \simeq j^{2/d} ~\text{ as }~j \to \infty,
\end{equation}
where $j=0, 1, 2, \dots$ is an enumeration of the integer lattice $\Z^d$ and
\begin{equation}\label{eftrig}
e_j(x) \propto e^{2\pi i k_j \cdot x},~x \in \Omega,~k_j \in \Z^d.
\end{equation}
An equivalent norm is obtained by replacing $(1+\lambda_j)^\gamma$ by $(1+\lambda_j^\gamma)$ and this norm is further equivalent to the graph norm of the image of $L^2$ under the differential operator $(id-\Delta^{\gamma/2})$ when $\gamma/2 \in \N$.

We define topological dual spaces $H^{-\gamma}(\Omega) = (H^\gamma(\Omega))^*$ with norm
\begin{equation}\label{dual}
\|f\|_{H^{-\gamma}} = \sup_{\psi \in H^{\gamma}: \|\psi\|_{H^{\gamma}} \le 1} \big| \langle f , \psi \rangle_{L^2}|,
\end{equation}
where the supremum may be restricted to $\psi \in C^\infty(\Omega)$. If we understand $\langle h, e_j \rangle_{L^2}$ as the action $T_h(e_j)$ of periodic Schwartz distributions $T_h$ on smooth test functions $e_j \in C^\infty(\Omega)$, then the $h^{-\gamma}$ norms from (\ref{seqnorm}) are equivalent to these dual norms also for negative $-\gamma<0$, and (\ref{dual}) is valid for all $\gamma \in \R$. We define closed subspaces $$H^\gamma_0 =H^\gamma \cap \{h: \langle h, 1\rangle_{L^2}=0\}$$ and note that $H^0_0(\Omega) = L^2_0(\Omega)$. On $H^\gamma_0(\Omega)$ we have equivalent norms as in (\ref{seqnorm}) but using only $\lambda_j^{\gamma}$ in place of $(1+\lambda_j)^\gamma$, taking note of the Poincar\'e inequality  $\lambda_1 \ge 1/2\pi$. 

If we set $B^a=H^a$ for $a>d/2$ and $B^a=C^a$ for $0 \le a \le d/2$, then the multiplier inequality for Sobolev norms is
\begin{equation}\label{multbasic}
\|fg\|_{H^a} \lesssim \|f\|_{H^a} \|g\|_{B^a}.
\end{equation}
For $a<0$ we can use (\ref{dual}) and (\ref{multbasic}) to obtain
\begin{equation}\label{multbasicn}
\|fg\|_{H^a} \lesssim \|f\|_{H^a} \|g\|_{B^{|a|}.}
\end{equation}
We also need the following interpolation inequality for Sobolev norms (see (A.16), p.473, in \cite{TI} and Lemma 3.27 in \cite{R01})
 \begin{equation}\label{interpol}
 \|h\|_{h^\xi} \le \|h\|^{1-m}_{h^{\bar \gamma}} \|h\|^{m}_{h^{-b}}, ~~m= \frac{\bar \gamma - \xi}{\bar \gamma+b}, ~-\infty<-b \le \xi \le \bar \gamma<\infty.
 \end{equation}

To metrise the distance between two probability measures $\mu, \nu$ on a metric space $(X,d)$, we will use the transportation Wasserstein-$1$ distance,
\begin{equation} \label{wassdef}
W_1(\mu, \nu) = W_{1,(X,d)}(\mu, \nu) = \sup_{F: X \to \R, \|F\|_{Lip}\le 1} |EF(Z_1)-EF(Z_2)|
\end{equation}
 where $Z_1 \sim \mu, Z_2 \sim \nu$ and $$\|F\|_{Lip} = \sup_{x \neq y; x,y \in X} \frac{|F(x)-F(y)|}{d(x,y)}.$$ See Chapter 6 in \cite{V09} for equivalent definitions and further properties.

\section{Functional Bernstein--von Mises theorems}

\subsection{A general result for non-linear parabolic PDEs}

Let $\Omega = [0, 1]^d, d \in \mathbb N$, be the $d$-dimensional torus, and let $W$ be a finite-dimensional vector space. If each entry $u_k$ of a $W$-valued vector field $u : \Omega \to W$ lies in a function space $X(\Omega)$ we write $u \in X(\Omega,W)$, and the corresponding norm is denoted by $\|u\|^2_{X(\Omega, W)} =\sum_{k} \|u_k\|_{X(\Omega)}^2$. Often we will just write $X(\Omega)$ for $X(\Omega, W)$. We consider a general forward map 
\begin{equation} \label{gmap}
\theta \mapsto \G(\theta),~~\G: H^1(\Omega,W) \to L^2([0,T], L^2(\Omega, W)),~~T>0.
\end{equation}
When $W=\R$ this accommodates the solution map of the reaction-diffusion system (\ref{evol}), but further allows dynamical systems of vector fields, such as coupled systems of PDEs. The measurement model (\ref{data}) is then a special case of the  random design regression equation 
\begin{equation} \label{modelG}
Y_i = \G(\theta)(X_i) + \varepsilon_i, ~~i=1, \dots, N,~~\varepsilon_i \sim^{iid} \mathcal N(0, Id_W),
\end{equation}
where the $X_i$ are drawn iid from the uniform distribution $\lambda = dt dx/T $ on $\mathcal X = [0,T] \times \Omega$, independently of the Gaussian noise vectors $\varepsilon_i$. Just as after (\ref{data}) we write $Z^{(N)} := (Y_i,X_i)_{i=1}^N$, denote the product measure describing the law of the infinite observation vector $(Y_i, X_i)_{i=1}^\infty$ by $P_{\theta}^\mathbb N$, and occasionally write $L^2_\lambda(\mathcal X)$ for $L^2([0,T], L^2(\Omega))$ -- these are the same spaces but $L^2_\lambda(\mathcal X)$ carries an inner product $$\langle g,h \rangle_{L^2_\lambda(\mathcal X)} = \frac{1}{T}\int_0^T \int_\Omega g(t,x)h(t,x) dtdx$$ scaled by $1/T$ which is a minor difference relevant in some adjoint calculations in the proofs. This random design regression setting permits to borrow from the theory developed in \cite{N23} as well as the use of tools from concentration of product measures in infinite-dimensions (e.g., Ch.3 in \cite{GN16}). Modelling explicit dependence structures in the design complicates the development significantly but is in principle possible, for instance as in \cite{N24}.

\smallskip

We now turn to the prior probability measure for the parameter $\theta \in H^1$ -- we refer to Ch.2 in \cite{GN16} for standard background from the theory of Gaussian processes, such as the definition of their reproducing kernel Hilbert spaces (RKHS).

\begin{condition}\label{priorgen} Consider the centred Gaussian Borel probability measure $\Pi'=\Pi'_\gamma$ defined on $L^2_0(\Omega,W)$ with RKHS $\mathcal H=H^\gamma_0(\Omega,W)$ for some $\gamma>1+d/2$. Then for $\theta' \sim \Pi'$ take as prior $\Pi=\Pi_{\gamma, N}$ the law of 
$$\theta = \frac{1}{\sqrt N \delta_N} \theta' \text{ where }\delta_N = N^{-\frac{\gamma}{2\gamma+d}},$$ with resulting RKHS-norm $\|\cdot\|_{\mathcal H_N} = \sqrt N \delta_n \|\cdot\|_\mathcal H$.
\end{condition}

By the Karhunen-Lo\`eve theorem, for $h_j$ any orthonormal basis of the RKHS $H_0^\gamma(\Omega, W)$, the prior $\Pi'_\gamma$ can be represented by the law of a Gaussian random series 
 \begin{equation} \label{gaussser}
\theta'(x) =  \sum_{j=1}^\infty g_{j} h_j,~~\text{ where } g_{j} \sim^{iid} N(0,1),
\end{equation}
augmented to independent such copies in each of its coordinates whenever $dim(W)>1$. Shrinking the prior towards zero ensures a degree of a priori regularisation and has been used throughout proofs in the non-linear inverse problems literature since \cite{MNP21}. By the theory of radonifying maps (as in Thm B.1.3 in \cite{N23}), $\Pi'_\gamma$ and $\Pi$ define centred Gaussian Borel probability measures on the separable Hilbert space $$ H^{\bar \gamma}_0(\Omega, W),~\text{for any}~1 \le \bar \gamma<\gamma-d/2,$$ which serves as the natural `parameter' space charged almost surely by prior draws. The posterior law of $\theta|Z^{(N)}$ then arises from a dominated model and is given by
\begin{equation}\label{postlog}
d\Pi(\theta|Z^{(N)}) \propto e^{\ell_N(\theta)} d\Pi(\theta),~~\ell_N(\theta) = -\frac{1}{2} \sum_{i=1}^N |Y_i - \G(\theta)(X_i)|_W^2,~~\theta \in H^1_0(\Omega, W),
\end{equation}
see \cite{GV17} or Sec.1.2.3 in \cite{N23}.

\smallskip

We now formulate some analytical conditions on $\G$ -- these are designed to be verifiable for solution maps $\theta \mapsto \G(\theta)$ of non-linear parabolic PDEs as in (\ref{genpde}).   We denote the balls in $H^r_0$ by 
\begin{equation} \label{rball}
U(r, B)=\big\{u \in H^r_0(\Omega, W): \|u\|_{H^r} \le B \big\},~B>0,
\end{equation}
 and the regularity properties will be required to hold uniformly on such balls with constants that may depend on $B$ and the fixed ground truth $\theta_0$. The constants may depend on further parameters $T,W, d, \bar \gamma, \gamma, \zeta$ which we however suppress in the notation.

\begin{condition}\label{gemol}
Let $\theta_0 \in H^{\gamma}_0(\Omega, W)$ for some $\gamma>1+d$. Suppose a forward map $\G$ as in (\ref{gmap}) satisfies the following hypotheses for some $\bar \gamma, \zeta$ such that $1 \le \bar \gamma \le \gamma -d/2$, $d/2 < \zeta < \gamma -d/2$, respectively:

\smallskip

A) For every $B>0$ and some $u=u(B)>0$,
\begin{equation} \label{ubd}
\sup_{\theta \in U(\bar \gamma, B), 0 < t \le T, x \in \Omega} |\G(\theta)(t,x)| \le u <\infty.
\end{equation}

\smallskip

B) For every $B>0$, there exists $c=c(B)>0$ such that  $$\|\G(\theta)-\G(\vartheta)\|_{L^2([0,T], L^2(\Omega))} \le c \|\theta - \vartheta\|_{L^2(\Omega)}~~\forall \theta, \vartheta \in U(1, B).$$ 

\smallskip

C) [Linear approximation:] There exist a continuous linear operator 
\begin{equation}\label{linmap}
\mathbb I_{\theta_0} \equiv D\G_{\theta_0} : L^2(\Omega) \to L^2_\lambda(\mathcal X)
\end{equation}
and constant $ c=c(\theta_0, B)>0,$ such that for all $h \in U(\bar \gamma, B), B>0,$ one has $$\|\G(\theta_0+h) - \G(\theta_0) - \mathbb I_{\theta_0}[h]\|_{L^2([0,T], L^2(\Omega))} \le c \|h\|^{2}_{L^2(\Omega)}.$$

\smallskip

D) [$L^\infty$-mapping properties:] Suppose that for all $B>0$ there exists $c=c(\theta_0, B)>0$ such that
\begin{equation}\label{zetabus}
\|\G(\theta) - \G(\theta_0)\|_{L^\infty([0,T]), L^\infty(\Omega))} \le c \|\theta - \theta_0\|_{H^{\zeta}},~~\forall \theta \in U(\zeta, B) \cap H^1.
\end{equation}
Suppose also that $\mathbb I_{\theta_0}$ from (\ref{linmap}) is continuous from $H^{\zeta} \to L^\infty([0,T],L^\infty(\Omega))$.

\smallskip

E) [Stability:] Suppose that for all $B>0$ there exists a  constant $c=c(\theta_0, B)>0$ such that $$\|\G(\theta) - \G(\theta_0)\|_{L^2([0,T], L^2(\Omega))} \ge c\|\theta-\theta_0\|_{H^{-1}},~~ \forall \theta \in U(\bar \gamma, B).$$

\smallskip

F) [Inverse information:] Take $\mathbb I_{\theta_0}$ from (\ref{linmap}) with Hilbert space adjoint operator $$\mathbb I_{\theta_0}^* : L_\lambda^2(\mathcal X) \to L^2(\Omega),$$ and consider the information operator $\mathbb I_{\theta_0}^*\mathbb I_{\theta_0}$ acting on $H^\eta_0 (\Omega) \subset L^2(\Omega)$. For $\Delta$ the Laplacian, some $\eta_0 \ge 0$ and all $\eta \ge \eta_0$, assume that 
\begin{equation}\label{ical}
\mathcal I \equiv \Delta \mathbb I_{\theta_0}^*\mathbb I_{\theta_0}
\end{equation}
defines a continuous linear homeomorphism from $H^\eta_0 \to H^\eta_0$.
\end{condition}

\begin{remark}[Priors on subspaces] \label{subprior} \normalfont In some applications it is of interest to consider $\theta'$ in (\ref{gaussser}) projected onto a closed subspace $\mathbb H$ of $H^\gamma_0(\Omega, W)$, for instance to enforce the divergence free constraint in the $2d$-Navier-Stokes model with $W=\R^2$ -- see \cite{NT23, KN25}. If this subspace $\mathbb H$ is compatible with the standard periodic Sobolev scale (as is the case with periodic divergence free vector fields), we can invoke Condition \ref{gemol} to hold with $H^\gamma_0(\Omega, W)$ replaced everywhere by $\mathbb H$, and the proof of Theorem \ref{mainbvm} below still applies after mostly notational adjustments, see \cite{KN25}, in particular Section 2.5.
\end{remark}

\begin{remark} [Condition \ref{gemol} and parabolic PDE] \normalfont \label{template}
Conditions A)-D) can be expected to follow from regularity estimates for `dissipative' systems of the form (\ref{genpde}), see e.g., \cite{R01, E10}. We will demonstrate in Sec.\ref{pdean} how this works for periodic reaction-diffusion equations (\ref{evol}), but similar estimates exist for McKean-Vlasov systems  \cite{NPR24} and for $2d$-Navier-Stokes equations  \cite{NT23}. In contrast, Conditions E) and F) are more subtle and related to identifiability properties of the dynamical system, and the question of statistical consistency of likelihood-based inference methods -- see \cite{NT23} and also \cite{N24, NPR24} for related settings. Notably, \cite{NT23} prove the consistency of the posterior distributions $\hat \Pi_{t,N}$ from (\ref{imagemeasure}) in the periodic $2d$-Navier-Stokes setting, based on a logarithmic stability estimate for $\theta \mapsto \G(\theta)$. The Lipschitz stability condition E) is, however, a stronger requirement, which we verify for reaction diffusion equations below. It is possible because we have access to \textit{time average} measurements near the origin $t=0$ in (\ref{modelG}) -- note that the logarithmic lower bounds in \cite{NT23} explicitly rule out such situations. Our proof relies on the symmetry of certain Schr\"odinger operators (\ref{sop}) appearing in the linearisation. This approach may not apply directly to general PDEs (\ref{genpde}), but Condition E) and the related injectivity part of Condition F) can still be verified for `dissipative' parabolic systems such as Navier-Stokes dynamics, as was shown recently in the follow up paper \cite{KN25}. See also Remark \ref{clever} for more discussion. \end{remark}

We now state a Bernstein-von Mises theorem for the posterior on $\theta$ in the `weak' norm topology of the space $H^{-k-2}$ for appropriate $k>0$. The posterior mean vector $\tilde \theta_N= E^\Pi[\theta|Z^{(N)}]$ exists as a Bochner integral in $L^2_0$ and hence also in $H^{-\kappa-2}$, while the Gaussian measure $\mathcal N_{\theta_0}$ is constructed in Proposition \ref{limmeas} as a tight Borel law on $H^{-k-2}$.

\begin{theorem}\label{mainbvm}
Suppose Conditions \ref{priorgen} and \ref{gemol} hold for some integer $\gamma>\max(3\zeta + (3d/2)+2, \eta_0)$ and $k> \max(\zeta + (5d/2), \eta_0)$. Let $\theta|Z^{(N)} \sim \Pi(\cdot|Z^{(N)})$ with posterior from (\ref{postlog}). For $W_1$ the Wasserstein distance (\ref{wassdef}) on $H^{-k-2}(\Omega)$ we have as $N \to \infty$
$$W_1(Law(\sqrt N (\theta - \tilde \theta_N)|Z^{(N)}), \mathcal N_{\theta_0}) \to^{P_{\theta_0}^\mathbb N} 0$$  as well as
$$\sqrt N (\tilde \theta_N - \theta_0) \to^d \mathcal N_{\theta_0} \text{ in } H^{-k-2}.$$
\end{theorem}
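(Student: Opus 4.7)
My plan is to adapt the nonparametric Bernstein--von Mises template of \cite{CN13, CN14, N20, N23} to the regression setting \eqref{modelG}: posterior localisation via contraction theory, a LAN-type expansion combined with the Gaussian prior's Cameron--Martin contribution, identification of the limit measure using the inverse-information operator of Condition \ref{gemol}(F), and a lifting step to Wasserstein-1 convergence on $H^{-k-2}$. The main obstacle will be the prior-penalty bookkeeping in Step~2.

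\textbf{Localisation.} First I would apply the contraction theory for rescaled Gaussian priors in nonlinear inverse problems (\cite{MNP21}, Ch.~2--3 of \cite{N23}): under \ref{gemol}(A,B) and the rate choice \eqref{ronacher}, the posterior concentrates on $\{\|\G(\theta) - \G(\theta_0)\|_{L^2(\mathcal X)} \le M\delta_N\}$ with $P_{\theta_0}^\mathbb N$-probability tending to $1$. Stability \ref{gemol}(E) converts this into a $\delta_N$-rate in $H^{-1}$; interpolating \eqref{interpol} against the deterministic prior tail $\|\theta\|_{H^{\bar\gamma}} = O_P(1)$ gives a polynomial rate in $H^\zeta$, which via \ref{gemol}(D) lifts to $\|\G(\theta) - \G(\theta_0)\|_{L^\infty(\mathcal X)} = o_P(1)$. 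This allows me to truncate all subsequent integrals to a shrinking $L^\infty$-neighbourhood of $\theta_0$.

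\textbf{LAN expansion, prior correction, and Gaussian limit.} Rescale $h = \sqrt N(\theta - \theta_0)$ and apply \ref{gemol}(C) to obtain
\begin{equation*}
\ell_N(\theta_0 + h/\sqrt N) - \ell_N(\theta_0) = \tfrac{1}{\sqrt N}\sum_{i=1}^N \langle \varepsilon_i, \mathbb I_{\theta_0}[h](X_i)\rangle_W - \tfrac{1}{2N}\sum_{i=1}^N |\mathbb I_{\theta_0}[h](X_i)|_W^2 + R_N(h),
\end{equation*}
where a uniform LLN makes the quadratic term close to $\tfrac12\|\mathbb I_{\theta_0} h\|^2_{L^2(\lambda)}$, the linear term is the mean-zero empirical score with limiting covariance $\mathbb I_{\theta_0}^*\mathbb I_{\theta_0}$, and $R_N(h) = o_P(1)$ uniformly on the localisation ball by \ref{gemol}(C). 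Since the Gaussian prior has no Lebesgue density, I would use Cameron--Martin: translating by $h/\sqrt N \in \mathcal H_N$ contributes a further log-factor $-\tfrac12 \|h/\sqrt N\|_{\mathcal H_N}^2 + \langle h/\sqrt N, \theta\rangle_{\mathcal H_N} = -\tfrac12\delta_N^2\|h\|^2_{H^\gamma} + \sqrt N\delta_N^2\langle h, \theta\rangle_{H^\gamma}$, which is $o_P(1)$ at the rate \eqref{ronacher} for $h$ of bounded $H^\gamma$-norm, with cross-terms involving the smooth $\theta_0$ absorbed by its regularity. Collecting everything, the effective log-density of $h$ under the localised posterior is, up to $o_P(1)$,
\begin{equation*}
-\tfrac12\langle \mathbb I_{\theta_0}^*\mathbb I_{\theta_0}(h - \bar h_N), h - \bar h_N\rangle_{L^2} + \mathrm{const},
\end{equation*}
where $\bar h_N = (\mathbb I_{\theta_0}^*\mathbb I_{\theta_0})^{-1}W_N$ is the efficient one-step correction built from the score $W_N$. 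Condition \ref{gemol}(F) makes this inverse rigorous: since $\mathcal I = \Delta \mathbb I_{\theta_0}^*\mathbb I_{\theta_0}$ is a bijection on $H^\eta_0$ for $\eta\ge\eta_0$, the operator $\mathcal I^{-1}\Delta$ defines a trace-class covariance on $H^{-k-2}$ via the Weyl asymptotics \eqref{weyl} provided $k > \max(\zeta + 5d/2, \eta_0)$, furnishing the tight centred Gaussian Borel measure $\mathcal N_{\theta_0}$.

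\textbf{Wasserstein convergence and posterior mean.} Using the dual formulation \eqref{wassdef}, it suffices to show $E^\Pi[F(\sqrt N(\theta - \tilde\theta_N))|Z^{(N)}] \to E F(Z)$ in $P_{\theta_0}^\mathbb N$-probability for every $1$-Lipschitz $F: H^{-k-2} \to \R$, with $Z \sim \mathcal N_{\theta_0}$. Combining uniform tightness in $H^{-k-2}$ (from the localisation step and the trace-class bound) with convergence of the Laplace functionals $\psi \mapsto E^\Pi[\exp(\langle\psi, \sqrt N(\theta-\tilde\theta_N)\rangle_{L^2})|Z^{(N)}]$ for $\psi\in H^{k+2}$ (a direct consequence of the LAN expansion applied to the $L^2$-pairing), standard Gaussian concentration arguments (Ch.~6 of \cite{V09}) yield $W_1$-convergence. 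The second assertion $\sqrt N(\tilde\theta_N - \theta_0)\to^d \mathcal N_{\theta_0}$ then follows by identifying $\sqrt N(\tilde\theta_N - \theta_0) = \bar h_N + o_P(1)$ via uniform integrability of the posterior (from the $H^{\bar\gamma}$-moment bound) and the CLT for $W_N$. The chief obstacle is the prior-penalty bookkeeping of Step~2: the $\sqrt N\delta_N^2$ cross-terms must be uniformly negligible across directions $h$ coupling large $H^\gamma$-norm (from $\bar h_N$) with large $L^2$-norm, and the strong quantitative hypotheses $\gamma > 3\zeta + 3d/2 + 2$ and $k > \zeta + 5d/2$ are chosen precisely so that the Sobolev multiplier inequalities \eqref{multbasic}--\eqref{multbasicn} absorb these.
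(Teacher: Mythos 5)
Your proposal captures the right broad template—localisation via contraction theory, a LAN/Cameron--Martin expansion, inversion of the information operator via Condition \ref{gemol}(F), and a Wasserstein lift—and this is indeed the strategy the paper pursues. But there is a genuine gap in the central LAN/prior-correction step. You rescale $h = \sqrt N(\theta - \theta_0)$ and apply Cameron--Martin, arguing that the prior penalty $-\tfrac12\delta_N^2\|h\|^2_{H^\gamma} + \sqrt N \delta_N^2\langle h,\theta\rangle_{H^\gamma}$ is $o_P(1)$ ``for $h$ of bounded $H^\gamma$-norm.'' This is not what happens: under the contraction event, posterior draws satisfy $\|\theta\|_{H^{\bar\gamma}} \le L$ only for $\bar\gamma < \gamma - d/2$, so $\|h\|_{H^\gamma}$ grows like $\sqrt N$ and the shift $h/\sqrt N$ is not of bounded RKHS norm. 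Worse, your completed-square centering $\bar h_N = (\mathbb I_{\theta_0}^*\mathbb I_{\theta_0})^{-1}W_N$ does not exist as written: $\mathbb I_{\theta_0}^*\mathbb I_{\theta_0}$ is a compact, non-surjective operator on $L^2$ whose inverse is unbounded, so the formal ``one-step correction'' has to be made sense of directionally. These are not bookkeeping issues to be absorbed by the multiplier inequalities; they are why the argument has to be restructured.

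The paper's fix is precisely to avoid rescaling the full parameter and instead perturb one smooth test functional at a time: for $\psi \in H^{\kappa+2}_0$ one sets $\bar\psi = \mathcal I^{-1}\Delta\psi$ (using Condition F, which inverts the \emph{composite} $\mathcal I = \Delta\,\mathbb I_{\theta_0}^*\mathbb I_{\theta_0}$ on a Sobolev scale, not $\mathbb I_{\theta_0}^*\mathbb I_{\theta_0}$ itself), projects $\bar\psi$ onto the trigonometric polynomials $E_{J_N}$ so the shift $\tfrac{t}{\sqrt N} p_N(\bar\psi)$ lies in the RKHS, and controls the Cameron--Martin cross term $\langle\theta, p_N(\bar\psi)\rangle_{\mathcal H_N}$ via an additional constraint built into the localisation event $\bar\Theta_N$ of Theorem \ref{postcont}, uniformly over $\psi \in U(\kappa,1)$. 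The centering $\hat\theta_N = \sum_j \hat\Psi_{N,j} e_j$ is then assembled Fourier-mode-by-mode and shown to converge only in $H^{-k-2}$ (display (\ref{momhat})), never in $L^2$; and the Wasserstein convergence requires a finite-dimensional truncation onto $E_J$ with explicit tail-variance bounds (\ref{fullbd1}), not merely tightness plus Laplace-functional convergence. Finally, your role for $k$ is misattributed: the tightness of $\mathcal N_{\theta_0}$ already holds on $H^{-a}$ for $a > 1+d/2$ (Proposition \ref{limmeas}); the stronger $k > \zeta + 5d/2$ is consumed by the empirical-process chaining in Lemma \ref{oldtimer} and by the $\kappa,\bar\kappa$ choices in (\ref{ks}).
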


The particular choice of $k$ is not important when combined in conjunction with a parabolic smoothing argument such as the one to be used in the proof of Theorem \ref{ganzwien} below. The lower bound on $\gamma$ could be improved slightly by introducing further technicalities (e.g., by Schauder- rather than just energy estimates in Condition \ref{gemol}D). 

The general proof strategy follows ideas developed in \cite{CN14, CR15, N20, MNP21a} who derive Bernstein-von Mises theorems in a variety of `non-conjugate' settings. Infinite  dimensional (`functional') such results have so far only been obtained in `direct' iid models in \cite{CN14, CvP21} and for non-linear inverse problems in \cite{N20, NS19} not covering Gaussian priors. Here we extend the `semi-parametric' proof in \cite{MNP21a} for fixed one-dimensional functionals $\langle \theta, \psi \rangle$ to hold uniformly in $\psi$ belonging to a unit ball of $H^{k+2}$, thereby obtaining a Gaussian approximation in the infinite-dimensional $\|\cdot\|_{H^{-k-2}}$ space. We also prove our results in the stronger Wasserstein distance rather than just in a metric for weak convergence, allowing to obtain the asymptotics of the posterior mean and first moments.

\subsection{Proof of Theorem \ref{mainbvm}}

Let us remark first that the hypotheses on $\gamma, \kappa$ we employ are typically `more than sufficient' to control the size of various approximation terms in the proofs that follow -- with the exception of `critical' terms that arise at the end of the proof of Lemma \ref{oldtimer}. To proceed we need to fix a few constants in advance: given $k$ from the theorem and $\zeta>d/2$ from Condition \ref{gemol}D, we take $\kappa$ satisfying 
\begin{equation} \label{ks}
2d+\zeta<\kappa < k-\frac{d}{2}.
\end{equation}
Further we take $\bar \gamma$ satisfying
\begin{equation}\label{gs}
3\zeta +d +2 <\bar \gamma<\gamma-\frac{d}{2},
\end{equation}
and assume without loss of generality that Condition \ref{gemol} holds for this $\bar \gamma$. [We note that if Condition \ref{gemol} holds for some $\bar \gamma'$, then it also holds for all $\bar \gamma \ge \bar \gamma'$ because $H_0^{\bar \gamma} \subset H_0^{\bar \gamma'}$.] 

\medskip

Define the span in $L^2(\Omega, W)$ of the ($dim(W)$-dimensional tensors of) trigonometric polynomials (\ref{eftrig}) as
\begin{equation}\label{ej}
E_J = span\{e_j: 0 \le j \le J\} \text{ with associated } L^2 \text{-projector } P_{E_J}: L^2 \to E_J.
\end{equation} 
Since the RKHS of the base prior from Condition \ref{priorgen} equals $\mathcal H = H^\gamma_0(\Omega)$ with equivalent norm $\|\cdot\|_{h^\gamma}$, and using (\ref{weyl}), we have for every $J \in \mathbb N$ and some $c>0$ the standard approximation bounds
\begin{equation}\label{approxh}
\|P_{E_J}\phi - \phi\|_{L^2} \le c \|\phi\|_{\mathcal H} J^{-\gamma/d},~~\|P_{E_J}\phi\|_{\mathcal H} \le c \max(1,J^{(\gamma - r)/d}) \|\phi\|_{H^r},~~\forall \phi \in L^2_0(\Omega),
\end{equation}
which we shall use repeatedly in the proofs.

\subsubsection{Posterior contraction, regularity, and localisation}

The first step is a global `consistency' result about the contraction of the posterior distribution (\ref{postlog}) towards the `ground truth' initial condition $\theta_0$. Proofs of this kind for `direct' regression models follow ideas laid out in \cite{GV17}, specifically for Gaussian priors see \cite{vdvaart2008}. In the context of (non-linear) inverse problems as relevant here, such proofs were developed in \cite{MNP21} and then, in a general setting in Chapter 2 in \cite{N23}, assuming `stability' estimates for the $\G$ map that we will verify using Condition \ref{gemol}E). A novel feature we require is uniform control of the interaction of the RKHS inner product $\langle \cdot, \cdot \rangle_{\mathcal H}$ of the prior with test functions exhausting the dual norm of $H^{-\kappa}$. 
 
 \smallskip

For $\gamma, \delta_N$ from Condition \ref{priorgen} and $\bar \gamma$ from (\ref{gs}), (\ref{ks}), define sequences
\begin{equation}\label{deltan}
\tilde \delta_N(\xi) = \delta_N^{(\bar \gamma-\xi)/(\bar \gamma+1)},~ 0 \le \xi \le \bar \gamma,~~ \tilde \delta_N(0) \equiv \tilde \delta_N,
\end{equation}
as well as
\begin{equation} \label{approxrkhs}
J_N \in \mathbb N,~ J_N \simeq N \delta_N^2,~~K_N =\sqrt {J_N} \max(1,J_N^{(\gamma-\kappa)/d}).
\end{equation}
In the following proof only Conditions \ref{gemol}A), B) and E) are used, and the hypothesis on $\gamma$ from Theorem \ref{mainbvm} could be somewhat weakened, as inspection of the proof shows. 

\begin{theorem} \label{postcont} For $\psi \in L_0^2(\Omega, W)$ define $L^2$-projections onto $E_J$ from (\ref{ej}) as
\begin{equation}\label{projpsi}
p_N(\psi)=P_{E_{J_N}}\psi
\end{equation}
Let $\bar \gamma, \kappa$ be as in (\ref{gs}), (\ref{ks}), the RKHS $\mathcal H_N$ as in Condition \ref{priorgen}, and for $L>0$ define measurable sets
\begin{align} \label{barthetan}
\bar \Theta_N &=\Bar \Theta_{N,L} := \Big\{\theta \in H^1_0: \|\theta\|_{H^{\bar \gamma}} \le L, ~ \sup_{\psi \in U(\kappa,1)} |\langle \theta, p_N(\psi) \rangle_{\mathcal H_N}| \le L \sqrt N \delta_N K_N  \Big\}  \\
&~~~~ \cap ~\Big\{\|\G(\theta)-\G(\theta_0)\|_{L^2([0,T], L^2(\Omega))} \le L \delta_N,~ \|\theta - \theta_0\|_{H^\xi} \le L \tilde \delta_N(\xi) ~\forall 0 \le \xi <\bar \gamma \Big\}. \notag
\end{align}
Let the posterior measure $\Pi(\cdot |Z^{(N)})$ be as in (\ref{postlog}) arising from data (\ref{modelG}) and prior $\Pi_{\gamma, N}$ from Condition \ref{priorgen}. Then for all $b>0$ we can choose $L$ large enough but finite such that
\begin{equation} \label{ratekey}
P_{\theta_0}^\mathbb N \big(\Pi(\bar \Theta_{N,L} |Z^{(N)}) \le 1 - e^{-bN\delta_N^2} \big) \to_{N \to \infty} 0.
\end{equation}
We further have for any $1 \le q<\infty, 0 \le \xi \le \bar\gamma,$ that
 \begin{align}\label{meanreg}
 \|E^\Pi[\theta|Z^{(N)}] - \theta_0\|_{H^\xi} &\le (E^\Pi[\|\theta-\theta_0\|^q_{H^\xi}|Z^{(N)}])^{1/q} = O_{P_{\theta_0}^\mathbb N}(\tilde \delta_N(\xi))
 \end{align}
\end{theorem}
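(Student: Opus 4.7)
The plan is to combine the Gaussian-prior contraction template for non-linear inverse problems (cf.~MNP21 and Ch.~2 in \cite{N23}) with a new uniform bound on the RKHS functional entering (\ref{barthetan}). Three ingredients are assembled in turn: (i) contraction in the prediction norm $\|\G(\theta)-\G(\theta_0)\|_{L^2(\mathcal X)}$ at rate $\delta_N$; (ii) stability-plus-interpolation converting (i) into Sobolev rates; and (iii) a uniform Gaussian bound on $\psi \mapsto \langle\theta, p_N(\psi)\rangle_{\mathcal H_N}$.

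\textbf{Contraction and interpolation.} For (i), Condition~\ref{gemol}B (Lipschitz continuity of $\G$ on $L^2$) transports a standard small-ball bound on the Gaussian prior into a prior-mass lower bound $\Pi\{\|\G(\theta)-\G(\theta_0)\|_{L^2(\mathcal X)} \le \delta_N\} \ge e^{-c_0 N\delta_N^2}$; the exponential cost is $N\delta_N^2$ because $\theta_0 \in \mathcal H = H^\gamma_0$, giving the $\delta_N$-optimal concentration function of the rescaled prior. Borell--TIS applied to $\Pi$ on the separable Hilbert space $H^{\bar\gamma}_0$ (feasible since $\bar\gamma < \gamma - d/2$ by (\ref{gs})) gives the sieve bound $\Pi\{\|\theta\|_{H^{\bar\gamma}} > L\} \le e^{-(c_0+b)N\delta_N^2}$ for $L$ large. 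Hellinger tests in the regression model, built from uniform $L^\infty$-control on the sieve via Condition~\ref{gemol}A, then allow a direct application of the general contraction theorem (Thm.~2.2.2 in \cite{N23}) to yield $\|\G(\theta)-\G(\theta_0)\|_{L^2([0,T],L^2(\Omega))} \le L\delta_N$ with posterior complement mass $\le e^{-bN\delta_N^2}$ on an event of full frequentist probability. Condition~\ref{gemol}E upgrades this to $\|\theta-\theta_0\|_{H^{-1}} \le L\delta_N$, and combining with the sieve, the interpolation inequality (\ref{interpol}) with endpoints $-b=-1$ and $\bar\gamma$ returns $\|\theta-\theta_0\|_{H^\xi} \lesssim \delta_N^{(\bar\gamma-\xi)/(\bar\gamma+1)} = \tilde\delta_N(\xi)$, matching the last constraint in (\ref{barthetan}).

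\textbf{RKHS functional, posterior transfer and moment bound.} Writing $\theta = \theta'/(\sqrt N\delta_N)$ with $\theta' \sim \Pi'$, one has $\langle\theta, p_N(\psi)\rangle_{\mathcal H_N} = \sqrt N\delta_N \langle\theta', p_N(\psi)\rangle_{\mathcal H}$. For $\psi \in U(\kappa,1) \subset U(\bar\kappa,1)$, (\ref{approxh}) gives $\|p_N(\psi)\|_{\mathcal H} \lesssim J_N^{\gamma-\bar\kappa}$, so each fixed $\psi$ produces a centred Gaussian of standard deviation $\lesssim J_N^{\gamma-\bar\kappa}$. Since $p_N(\psi) \in E_{J_N}$, the supremum over $U(\kappa,1)$ collapses to a Gaussian supremum over a finite-dimensional ellipsoid; Borell--TIS plus a polynomial-cardinality net of $U(\kappa,1)$ in the weaker $H^{\bar\kappa}$ norm (feasible thanks to the gap $\kappa - \bar\kappa > d/2$ from (\ref{ks})) delivers the bound $\le L K_{N,\bar\kappa}$ with prior probability $\ge 1 - e^{-3c_0 N\delta_N^2}$. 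The transfer to the posterior is standard: using $e^{\ell_N(\theta)-\ell_N(\theta_0)} \le $ empirical fluctuations together with the Bayes denominator lower bound $\int e^{\ell_N-\ell_N(\theta_0)}d\Pi \ge e^{-2c_0 N\delta_N^2}$ (from the prior-mass step), the posterior mass of the complement is $\lesssim e^{-bN\delta_N^2}$, and a union bound over the four constraints in (\ref{barthetan}) yields (\ref{ratekey}). For (\ref{meanreg}), Jensen reduces the posterior-mean error to the moment bound; splitting $E^\Pi[\|\theta-\theta_0\|^q_{H^\xi}|Z^{(N)}]$ on $\bar\Theta_{N,L}$ (bounded by $L^q \tilde\delta_N(\xi)^q$) and its complement, Cauchy--Schwarz against the exponentially small posterior mass and the finite unconditional Gaussian moments $E^{\Pi}[\|\theta\|_{H^\xi}^{2q}] < \infty$ (available because $\xi < \bar\gamma < \gamma-d/2$) gives an absorbed contribution from the complement.

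\textbf{Main obstacle.} The bottleneck is the uniform RKHS bound: the supremum is indexed by the \emph{infinite-dimensional} ball $U(\kappa,1)$, and the polynomial-in-$J_N$ rate $J_N^{\gamma-\bar\kappa}$ must be achieved without losing more than a constant factor. This relies crucially on the slack $\kappa - \bar\kappa > d/2$ built into (\ref{ks}), which makes the entropy of $U(\kappa,1)$ in $H^{\bar\kappa}$ small enough for a single-scale Borell argument, and on the finite-dimensionality of $p_N(\psi) \in E_{J_N}$, which is what reduces the functional to an effective finite-dimensional Gaussian supremum in the first place.
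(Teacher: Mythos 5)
Your proposal follows essentially the paper's route: contraction in the prediction norm via the general template in \cite{N23}, Condition \ref{gemol}E) plus interpolation (\ref{interpol}) for the Sobolev rates, a Gaussian-process concentration bound for the RKHS supremum term, and a Bayes-denominator transfer for (\ref{meanreg}). Two points should be tightened, however. First, the claim of a \emph{polynomial-cardinality} net of $U(\kappa,1)$ in $H^{\bar\kappa}$ is wrong: the $\eta$-covering numbers are of order $\exp\{(A/\eta)^{d/(\kappa-\bar\kappa)}\}$, and even after projecting onto $E_{J_N}$ the cover remains exponential in $J_N$. A ``single-scale Borell'' argument therefore does not deliver the stated bound; the paper instead uses Dudley's metric-entropy integral, which converges precisely because $\kappa-\bar\kappa>d/2$, to bound $E\sup_{\psi}|X(\psi)|\lesssim J_N^{\gamma-\bar\kappa}$, and only then applies the Borell--TIS concentration inequality around that mean. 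Second, in the moment bound (\ref{meanreg}), Cauchy--Schwarz leaves you with the \emph{posterior} moment $E^\Pi[\|\theta-\theta_0\|_{H^\xi}^{2q}|Z^{(N)}]$, which is not immediately controlled by the unconditional Gaussian moments: you still need the denominator lower bound $\int e^{\ell_N-\ell_N(\theta_0)}d\Pi\ge e^{-(A+2)N\delta_N^2}$ (on a high-probability event) together with Markov/Fubini and $E_{\theta_0}^{\mathbb N}[e^{\ell_N(\theta)-\ell_N(\theta_0)}]\le 1$ to transfer it to a prior moment. You mention the denominator bound earlier in the RKHS transfer, so the ingredient is present, but it should be invoked explicitly in the moment step.
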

\begin{proof}
We start with a preparatory inequality: Consider
\begin{align} \label{excesss}
\Pi(\Theta'_N) & \equiv \Pi \big(\theta:\sup_{\psi \in U(\kappa,1)} |\langle \theta, p_N(\psi) \rangle_{\mathcal H_N}| >L  \sqrt N \delta_N K_N \big) \notag \\
&= \Pi'\big(\theta':\sup_{\psi \in U(\kappa,1)} |\langle \theta', p_N(\psi) \rangle_{\mathcal H}| >L  K_N \big),
\end{align} 
with $\langle \cdot, \cdot \rangle_{\mathcal H_N} = N \delta_N^2 \langle \cdot, \cdot \rangle_\mathcal H$ and $\theta= \theta'/{\sqrt N \delta_N}$. From the series expansion (\ref{gaussser}) for $\theta' \sim \Pi'$ and Parseval's identity we know that $$\{X(\psi) = \langle \theta', p_N(\psi) \rangle_{\mathcal H}: \psi \in U(\kappa,1)\}$$ is a mean zero Gaussian process with covariance metric $$d^2_X(\psi, \psi') =  \|p_N(\psi)-p_N(\psi')\|_{\mathcal H}^2 \lesssim \max(1,J_N^{2(\gamma-\kappa)/d} )\|\psi-\psi'\|^2_{H^{\kappa}} \equiv \sigma_N^2 \|\psi-\psi'\|^2_{H^{\kappa}},$$ using (\ref{approxh}) in the last estimate.  The image of $U(\kappa,1)$ under the projection map $p_N$ onto $E_{J_N}$ thus has $\eta$-covering numbers for $d_X$-distance bounded by that of a ball of radius $\sigma_N$ in a $J_N+1$-dimensional vector space and is hence of the order $$N(U(\kappa, 1), d_X, \eta) \lesssim \Big(\frac{A\sigma_N}{\eta}\Big)^{J_N+1}, ~0<\eta<A\sigma_N,~A>0,$$ see Section 4.3.7 in \cite{GN16}. We apply Dudley's metric entropy bound, Theorem 2.3.7 in \cite{GN16}, with $t_0=\psi=0\in U(\kappa,1)$ and the substitution $v = \eta /\sigma_N$ to obtain
$$E\sup_{\psi \in U(\kappa,1)} |X(\psi)| \lesssim  \int_0^{2\sigma_N} \sqrt {J_N\log \Big(\frac{A\sigma_n}{\eta}\Big)}d\eta \lesssim \sqrt {J_N} \sigma_N.$$
Then using the concentration inequality Theorem 2.1.20 in \cite{GN16} for suprema of Gaussian processes we obtain for all $L$ large enough that the probability in (\ref{excesss}) is bounded as
 \begin{align}
 \Pi(\Theta'_N) &\le \Pi' \Big(\theta': \sup_{\psi \in U(\kappa,1)} |\langle \theta', p_N(\psi) \rangle_{\mathcal H}| - E\sup_{\psi \in U(\kappa,1)} |\langle \theta', p_N(\psi) \rangle_{\mathcal H}|  > (L/2)  K_N  \Big) \notag \\
 &\lesssim e^{-cL^2 K_N^2/\sigma_N^2}  \lesssim e^{-c'L^2 N \delta_N^2} \label{excessbd}
 \end{align}
 for some $c,c'>0$.
 
We now apply Theorem 2.2.2 in \cite{N23} with parameter space $\Theta = H^1_0$ and regularisation space $\mathcal R = H_0^{\bar \gamma}(\Omega)$ (as remarked on p.31 in \cite{N23}, the proof applies in our periodic setting just as well). We can verify Condition 2.1.1 in \cite{N23} for $\kappa=0, \mathcal X = [0,T] \times \Omega, V=W,$ in view of Condition \ref{gemol}A) and B), and Condition 2.2.1 in \cite{N23} with such $\mathcal R$ by Condition \ref{priorgen}. Thus Theorem 2.2.2 from \cite{N23} implies for all $L$ large enough,
\begin{equation} \label{fwdratebd}
\Pi\Big(\theta \in (\Theta'_N)^c: \|\theta\|_{H^{\bar \gamma}} \le L, \|\G(\theta)-\G(\theta_0)\|_{L^2([0,T], L^2(\Omega))} \le L \delta_N |Z^{(N)}\Big) \to^{P_{\theta_0}^\mathbb N} 1
\end{equation}
as $N \to \infty$, in fact as in \cite{N23} with the required convergence rate bound, and restriction to $(\Theta'_N)^c$ being possible by (\ref{excessbd}) and the remark on p.33 in \cite{N23}. The proof further implies by virtue of (1.28) in \cite{N23} that 
\begin{equation}\label{AN}
P_{\theta_0}^\mathbb N \big(A_N \big) \to_{N \to \infty} 1 \textit{ where } A_N = \left\{\int_{H^1_0} e^{\ell_N(\theta)-\ell_N(\theta_0)} d\Pi_N(\theta) \ge e^{-(A+2) N \delta_N^2} \right\}
\end{equation}
for some $A=A(\Pi')>0$, which we will use below.

\smallskip

The next step is based on Condition \ref{gemol}E) which implies for $\theta$ in the event inside of the probability in (\ref{fwdratebd}) that for some constant $c=c(L,\theta_0)$,
\begin{equation}\label{pseudolin}
\|\G(\theta) - \G(\theta_0)\|_{L^2(\mathcal X)} \ge  c \|\theta-\theta_0\|_{H^{-1}}.
\end{equation}
 From interpolation (\ref{interpol}) with $b=1, m=(\bar \gamma-\xi)/(\bar \gamma+1)$ we deduce for all $0 \le \xi \le \bar \gamma$,
\begin{equation}\label{stabilityestimate}
\|\theta - \theta_0\|_{H^\xi} \lesssim \|\G(\theta) - \G(\theta_0)\|^m_{L^2(\mathcal X)}
\end{equation}
which combined with (\ref{fwdratebd}) implies the limit (\ref{ratekey}).

It remains to prove (\ref{meanreg}). The first inequality follow from Jensen's inequality. Then from the Cauchy-Schwarz inequality we can bound 
\begin{align} \label{qreg}
(E^\Pi[\|\theta-\theta_0\|^q_{H^\xi}|Z^{(N)}])^2 &\le L^{2q}\tilde \delta^{2q}_N(\xi) + E^\Pi[\|\theta-\theta_0\|_{H^\xi}^{2q}|Z^{(N)}] \Pi(\|\theta-\theta_0\|_{H^\xi}>L\tilde \delta_N(\xi)|Z^{(N)}) \notag \\
&\equiv L^{2q}\tilde \delta^{2q}_N(\xi) + t_N.
\end{align}
For the second summand we use (\ref{ratekey}), (\ref{AN}), Markov's inequality,  Fubini's theorem, (\ref{postlog}), $E_{\theta_0}^\mathbb N[e^{\ell_N(\theta) - \ell_N(\theta_0)}] \le 1$ and finiteness of all $\|\cdot\|_{H^\xi}$-norm moments of $\Pi$ from Condition \ref{priorgen} (cf.~Exercise 2.1.5 in \cite{GN16})) to obtain
\begin{align*}
P_{\theta_0}^\mathbb N \big(t_N \ge \tilde \delta^{2q}_N(\xi)\big) &\le P_{\theta_0}^\mathbb N\Big(e^{(A+2-b)N\delta_N^2}\int \|\theta-\theta_0\|_{H^\xi}^{2q} e^{\ell_N(\theta)-\ell_N(\theta_0)} d\Pi(\theta) > \tilde \delta^{2q}_N(\xi) \Big) + o(1) \\
&\le e^{(A+2-b)N\delta_N^2} \tilde \delta^{-2q}_N(\xi) \int \|\theta-\theta_0\|_{H^\xi}^{2q} d\Pi(\theta) + o(1) \to_{N \to \infty} 0
\end{align*}
 for $L$ and then $b$ large enough, so that (\ref{meanreg}) follows.
\end{proof}
We can use the previous contraction theorem to show that the posterior measure is asymptotically equivalent to a hypothetical posterior $\Pi^{\bar \Theta_N}(\cdot|Z^{(N)})$ arising from (\ref{postlog}) with `localised' prior 
\begin{equation} \label{respr}
\Pi^{\bar \Theta_N} = \frac{\Pi(\cdot \cap \bar \Theta_N)}{\Pi(\bar \Theta_N)}
\end{equation}
 restricted to the regularisation set $\bar \Theta_N=\bar \Theta_{N,L}$ from ({\ref{barthetan}). Specifically, from the arguments on p.142 in \cite{vdvaart1998}, Theorem \ref{postcont} then implies that for any $b>0$ and $L(b)$ large enough, with $P_{\theta_0}^\mathbb N$-probability approaching one as $N \to \infty$,
\begin{equation} \label{tvbd}
\|\Pi(\cdot|Z^{(N)}) - \Pi^{\bar \Theta_{N,L}}(\cdot|Z^{(N)})\|_{TV} \le \Pi(\bar \Theta_{N,L}^c|Z^{(N)}) \lesssim e^{-b N\delta_N^2}
\end{equation}
where $\|\cdot\|_{TV}$ is the usual total variation norm on the space of probability measures on $L^2_0(\Omega)$. 

\smallskip

Now if we denote by $\tau_N$ the (conditional) law of $\tau_N = Law(\sqrt N (\theta - T))$ for $\theta \sim \Pi(\cdot|Z^{(N)})$ and any fixed re-centring $T \in H^{-k-2}$, and if we denote by $\bar \tau_N$ the corresponding law where $\theta$ is replaced by a draw $\theta \sim \Pi^{\bar \Theta_N}(\cdot|Z^{(N)})$, then we obtain for the Wasserstein distance $W_1=W_{1, H^{-k-2}}$ featuring in Theorem \ref{mainbvm}, the following approximation:
\begin{proposition} \label{wassconv}
For any $T \in H^{-k-2}$ and some $c>0$ we have as $N \to \infty$ $$W_{1}(\tau_N, \bar \tau_N) = O_{P_{\theta_0}^\mathbb N}(e^{-cN\delta_N^2}).$$
\end{proposition}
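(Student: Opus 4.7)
My plan is to reduce the problem via Kantorovich--Rubinstein duality to a short computation with signed measures on the parameter space. Writing $\Pi_N := \Pi(\cdot|Z^{(N)})$ and $\Pi_N^{\bar \Theta} := \Pi^{\bar \Theta_N}(\cdot|Z^{(N)})$, I would first express
$$W_1(\tau_N, \bar\tau_N) = \sup_{F:H^{-k-2}\to \R,\, \|F\|_{Lip}\le 1} \Big|\int F\, d\tau_N - \int F\, d\bar\tau_N\Big|,$$
and exploit the fact that such $F$ may be chosen with $F(0)=0$ (adding a constant does not affect the difference), so $|F(y)| \le \|y\|_{H^{-k-2}}$. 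Pulling back to the posteriors on the parameter space and using $F(\sqrt N(\theta-T))$ as a test function yields the master inequality
$$W_1(\tau_N, \bar\tau_N) \;\le\; \sqrt N \int \|\theta - T\|_{H^{-k-2}}\, d|\Pi_N - \Pi_N^{\bar\Theta}|(\theta).$$

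Next I would unpack the signed measure: since $d\Pi_N^{\bar\Theta} = \Pi_N(\bar\Theta_N)^{-1}\mathbf{1}_{\bar\Theta_N}\, d\Pi_N$, the density of $\Pi_N - \Pi_N^{\bar\Theta}$ with respect to $\Pi_N$ equals $1$ on $\bar\Theta_N^c$ and $-\Pi_N(\bar\Theta_N^c)/\Pi_N(\bar\Theta_N)$ on $\bar\Theta_N$. The right-hand side therefore splits as
$$\sqrt N \int_{\bar\Theta_N^c} \|\theta-T\|_{H^{-k-2}}\, d\Pi_N \;+\; \sqrt N \, \frac{\Pi_N(\bar\Theta_N^c)}{\Pi_N(\bar\Theta_N)} \int_{\bar\Theta_N} \|\theta-T\|_{H^{-k-2}}\, d\Pi_N.$$
Because $k+2>0$, the embedding $L^2 \hookrightarrow H^{-k-2}$ is continuous, so $\|\theta-T\|_{H^{-k-2}} \lesssim \|\theta-\theta_0\|_{L^2} + \|\theta_0\|_{L^2} + \|T\|_{H^{-k-2}}$, reducing both integrals to moments of $\|\theta-\theta_0\|_{L^2}$ under $\Pi_N$. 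These are precisely what (\ref{meanreg}) at $\xi=0,\,q=2$ gives: $E^{\Pi_N}[\|\theta-\theta_0\|_{L^2}^2] = O_{P_{\theta_0}^\mathbb N}(\tilde\delta_N^2) = O_{P_{\theta_0}^\mathbb N}(1)$.

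Finally I would apply Cauchy--Schwarz to the first integral and combine it with $\Pi_N(\bar\Theta_N^c) \lesssim e^{-bN\delta_N^2}$ from (\ref{tvbd}) (where $b$ may be chosen arbitrarily large upon enlarging $L$ in the definition (\ref{barthetan}) of $\bar\Theta_N$, by Theorem \ref{postcont}), yielding a bound of order $\sqrt N \, e^{-bN\delta_N^2/2}$. For the second summand I would simply note that the prefactor is $O_{P_{\theta_0}^\mathbb N}(e^{-bN\delta_N^2})$ (since $\Pi_N(\bar\Theta_N)\to 1$ in probability), while the integral is $O_{P_{\theta_0}^\mathbb N}(1)$. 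Since $N\delta_N^2 \simeq N^{d/(2\gamma+d)}$ grows polynomially in $N$, the $\sqrt N$ prefactor is absorbed by choosing $b > 2c$ for the desired $c>0$, giving $W_1(\tau_N, \bar\tau_N) = O_{P_{\theta_0}^\mathbb N}(e^{-cN\delta_N^2})$. I do not expect any substantive obstacle: all analytic work has been done in Theorem \ref{postcont}, and what remains is an elementary signed-measure computation paired with Cauchy--Schwarz.
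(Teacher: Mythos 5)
Your proof is correct, and it arrives at the same conclusion via an organisation that is close in spirit to the paper's (both hinge on a Kantorovich--Rubinstein--type duality, the total-variation bound (\ref{tvbd}), and the posterior moment bound (\ref{meanreg})), but the concrete decomposition is genuinely different. The paper invokes Theorem 6.15 of \cite{V09} with the \emph{data-adapted} reference point $\vartheta_0 = \sqrt N(\theta_0 - T)$ (equivalently, centring at $\theta_0$ in parameter space), and then splits the signed-measure integral into a small-ball part (bounded by the total variation) and two tail integrals over $\{\|\cdot - \vartheta_0\|>1\}$, using the explicit structure of $\bar\Theta_N$ to annihilate the $\bar\tau_N$ tail outright. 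You instead centre at $0$ (i.e.\ at $T$ in parameter space), which introduces the harmless fixed constant $\|\theta_0\|_{L^2} + \|T\|_{H^{-k-2}}$, and you replace the small/large-ball split by a direct unpacking of the Radon--Nikodym density of $\Pi_N - \Pi^{\bar\Theta_N}_N$ with respect to $\Pi_N$, followed by Cauchy--Schwarz. Your route carries an explicit $\sqrt N$ prefactor that the paper tries to suppress, but you correctly observe that since $N\delta_N^2 \simeq N^{d/(2\gamma+d)}$ grows polynomially, this prefactor is swallowed by the exponential after enlarging $L$ (hence $b$) in Theorem \ref{postcont}. Your explicit signed-measure computation is in fact slightly more transparent to audit than the paper's tail-set argument, so the tradeoff is a marginally weaker intermediate bound for a cleaner bookkeeping, and both are sound.
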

\begin{proof} We use Theorem 6.15 in \cite{V09} to obtain for $\vartheta_0 = \sqrt N (\theta_0-T)$ the inequality
\begin{align}
&W_1(\tau_N, \bar \tau_N) \le \int_{H^{-k-2}} \|\vartheta-\vartheta_{o}\|_{H^{-k-2}} d|\tau_N - \bar \tau_N|(\vartheta) \notag \\
&\leq \|\Pi(\cdot|Z^{(N)}) - \Pi^{\bar \Theta_{N,L}}(\cdot|Z^{(N)})\|_{TV} + \\
&~~~ \int_{\|\vartheta- \vartheta_{o}\|_{H^{-k-2}} > 1} \|\vartheta-\vartheta_{o}\|_{H^{-k-2}} d\tau_N(\vartheta) +  \int_{\|\vartheta-\vartheta_{o}\|_{H^{-k-2}} > 1} \|\vartheta-\vartheta_{o}\|_{H^{-k-2}} d\bar \tau_N(\vartheta). \notag
\end{align}
From (\ref{tvbd}) the first term is of order $e^{-b N \delta_N^2} \to 0$ as $N \to \infty$. For the second term we have from $\vartheta-\vartheta_0 = \sqrt N(\theta-\theta_0)$, the Cauchy-Schwarz inequality, the continuous imbedding $L^2 \subset H^{-k-2}$ and Theorem \ref{postcont}, for all $N$ large enough,
\begin{align*}
& \sqrt N E^{\Pi}[\|\theta- \theta_{0}\|_{H^{-k-2}} 1_{\{\|\theta-\theta_0\|_{H^{-k-2}} > 1\}}|Z^{(N)}] \\
&\le \sqrt N \big(E^{\Pi}[\|\theta- \theta_{0}\|_{L^2}^2|Z^{(N)}])^{1/2} \Pi(\|\theta-\theta_0\|_{L^2}> 1 |Z^{(N)})\big)^{1/2} =O_{P_{\theta_0}^\mathbb N}\big( e^{-bN \delta_N^2/2}\tilde \delta_N N^{1/2} \big).
\end{align*} 
Finally, by definition of $\bar \Theta_N$ and since $\tilde \delta_N=o(1)$, we have $\Pi^{\bar \Theta_N}(\|\theta-\theta_0\|_{H^{-k-2}}> 1 |Z^{(N)})=0$ from some $N$ onwards so the third term equals zero eventually, and the result is proved. 
\end{proof}

The previous result holds in fact for the $W_{1, L^2_0}$-distance as long as $T \in L^2_0$. We conclude that it suffices to prove the first limit in Theorem \ref{mainbvm} for $\bar \tau_N$ rather than $\tau_N$.

\subsubsection{Inverting the Fisher information operator, and the limiting process}

Let $\psi \in H_0^{\eta+2}(\Omega)$ so that $\Delta \psi$ lies in $H^\eta_0(\Omega)$. If $\eta \ge \eta_0$ then Condition \ref{gemol}F) implies that there exists $\bar \psi \in H^\eta_0(\Omega)$ such that $\mathcal I \bar \psi = \Delta \psi$, which we write as
\begin{equation} \label{barpsi}
\bar \psi =\mathcal I^{-1}\Delta \psi,
\end{equation}
for continuous inverse $\mathcal I^{-1}$. By definition of $\mathcal I = \Delta \mathbb I_{\theta_0}^* \mathbb I_{\theta_0}$ this implies
$$\Delta \mathbb I_{\theta_0}^* \mathbb I_{\theta_0} (\bar \psi) = \Delta \psi ~\Rightarrow~ \Delta(\mathbb I_{\theta_0}^* \mathbb I_{\theta_0} (\bar \psi) - \psi) =0$$  and therefore (e.g., by the maximum principle, or examining Fourier coefficients) $$\mathbb I_{\theta_0}^* \mathbb I_{\theta_0} (\bar \psi) - \psi=c_\psi \text{ where } c_\psi \text{ is constant on } \Omega.$$ [Since $\int_\Omega \psi =0$ this constant equals the first Fourier coefficient $c_\psi = \langle \mathbb I_{\theta_0}^* \mathbb I_{\theta_0} (\bar \psi), 1 \rangle_{L^2}$, but we shall not need this here.]  We can write, for all $h \in L^2_0(\Omega)$, 
\begin{equation}\label{keyfishinv}
\langle \mathbb I_{\theta_0} h, \mathbb I_{\theta_0} \bar \psi \rangle_{L^2(\mathcal X)} = \langle h, \mathbb I_{\theta_0}^*\mathbb I_{\theta_0} \bar \psi \rangle_{L^2(\Omega)}  = \langle h, \mathbb I_{\theta_0}^*\mathbb I_{\theta_0} \bar \psi - c_\psi\rangle_{L^2(\Omega)} = \langle h, \psi \rangle_{L^2},
\end{equation}
so $\bar \psi$ acts as the inverse $(\mathbb I_{\theta_0}^*\mathbb I_{\theta_0})^{-1}\psi$ for such $\psi$. We also have, by continuity of $\mathcal I^{-1}$,
\begin{equation}\label{fishinvbd}
\|\bar \psi\|_{H^\eta} = \|\mathcal I^{-1}[\Delta \psi]\|_{H^\eta} \lesssim \|\Delta \psi\|_{H^\eta} \lesssim \|\psi\|_{H^{\eta+2}},~~ \psi \in H^{\eta+2}_0.
\end{equation}

\medskip

What precedes will be central to our proof of Theorem \ref{mainbvm}, but also is the key to construct the limiting Gaussian measure $\mathcal N_{\theta_0}$: For $g,h \in L^2_0(\Omega) \cap C^\infty(\Omega)$, let $\bar g = \mathcal I^{-1}[\Delta g]$, $\bar h = \mathcal I^{-1}[\Delta h]$ and define a mean zero Gaussian process $\mathbb W$ with covariance 
\begin{align*}
E\mathbb W(g) \mathbb W (h) &= \langle \mathbb I_{\theta_0} \bar g, \mathbb I_{\theta_0} \bar h\rangle_{L^2(\mathcal X)} = \langle \bar  g, \mathbb I_{\theta_0}^* \mathbb I_{\theta_0} \bar h \rangle_{L^2(\Omega)},~~~ g,h \in L^2_0(\Omega) \cap C^\infty.
\end{align*}
The induced law defines a cylindrical probability measure on $\R^\N$ by the action $Z=(Z_j = \mathbb W(e_j): j \ge 1)$ of $\mathbb W$ on the orthonormal basis functions $e_j \in C^\infty$ from (\ref{eftrig}). Its restriction to the subspace $H_0^{-a}$ of $\R^\N$ exists almost surely since 
\begin{align*}
E\|Z\|_{h^{-a}}^2 &\lesssim \sum_{j \ge 1} \lambda_j^{-a} E|\mathbb W(e_j)|^2  = \sum_{j \ge 1} \lambda_j^{-a} \langle \mathcal I^{-1} \Delta e_j, \mathbb I_{\theta_0}^*\mathbb I_{\theta_0} \mathcal I^{-1}\Delta e_j \rangle_{L^2(\Omega)} \\
&= \sum_{j \ge 1} \lambda_j^{-a+2} \langle \mathcal I^{-1} e_j, \Delta^{-1} \Delta\mathbb I_{\theta_0}^*\mathbb I_{\theta_0}\mathcal I^{-1} e_j\rangle_{L^2(\Omega)} \\ 
&= \sum_{j \ge 1} \lambda_j^{-a+1} \langle -\mathcal I^{-1} e_j, e_j \rangle_{L^2(\Omega)} \le  \sum_{j \ge 1} \lambda_j^{-a+1} \|\mathcal I^{-1} e_j\|_{h^{\eta_0}} \|e_j\|_{h^{-\eta_0}} \\
&\lesssim \sum_{j \ge 1} \lambda_j^{-a+1} \|e_j\|_{h^{\eta_0}} \|e_j\|_{h^{-\eta_0}}  \le \sum_{j \ge 1} \lambda_j^{-a+1} < \infty
\end{align*}
whenever $2(a-1)/d>1$. Here we have used (recall (\ref{weyl})) that $e_j$ are the eigenfunctions of both $\Delta$ and its inverse $\Delta^{-1}$ for eigenvalues $-\lambda_j, -\lambda_{j}^{-1}$ on $L^2_0$, the Cauchy-Schwarz inequality, and that $-\mathcal I$ is a bounded operator on $h^{\eta_0}$. Therefore, using that $H_0^{-a}$ is separable and standard results on Gaussian measures (e.g., Section 2.1 in \cite{GN16}), we obtain:

\begin{proposition}\label{limmeas}
The cylindrical law of $\mathbb W$ defines a tight centred Gaussian Borel probability measure $\mathcal N_{\theta_0}$ on $H^{-a}(\Omega)$ for every $a>1+d/2$.
\end{proposition}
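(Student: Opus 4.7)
The plan is straightforward given that the key quantitative bound has already been derived in the covariance computation preceding the proposition: what remains is to promote the cylindrical law on $\R^\N$ to a genuine tight Borel probability measure on the separable Hilbert space $H^{-a}(\Omega)$ for $a>1+d/2$.

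First, fix $a>1+d/2$ and form the partial sums $S_n = \sum_{j=1}^n Z_j e_j \in H_0^{-a}(\Omega)$, where $Z_j = \mathbb{W}(e_j)$. Using the sequence-space norm (\ref{seqnorm}) together with the bound $E|Z_j|^2 \lesssim \lambda_j$ that is implicit in the chain of inequalities displayed in the excerpt, one obtains
\[
E\|S_n - S_m\|_{h^{-a}}^2 \;=\; \sum_{m<j\le n} \lambda_j^{-a}\,E|Z_j|^2 \;\lesssim\; \sum_{m<j\le n} \lambda_j^{-a+1},
\]
and this tail vanishes by the Weyl asymptotics (\ref{weyl}) precisely when $2(a-1)/d > 1$. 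Hence $(S_n)$ is Cauchy in $L^2(\P; H_0^{-a})$ and converges to some centred random element $Z_\infty \in H_0^{-a}$. Since each $S_n$ is a Gaussian vector in the separable Hilbert space $H_0^{-a}$, the It\^o--Nisio theorem upgrades the $L^2$-convergence to almost sure convergence.

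I then define $\mathcal N_{\theta_0}$ to be the law of $Z_\infty$, viewed as a Borel probability measure on $H_0^{-a} \subset H^{-a}$. Its finite-dimensional projections $(\langle Z_\infty, e_{j_1}\rangle_{L^2}, \ldots, \langle Z_\infty, e_{j_k}\rangle_{L^2}) = (\mathbb{W}(e_{j_1}), \ldots, \mathbb{W}(e_{j_k}))$ coincide by construction with those of the cylindrical Gaussian process $\mathbb{W}$, so $\mathcal N_{\theta_0}$ qualifies as a centred Gaussian Borel measure in the sense of Section 2.1 of \cite{GN16}; tightness is then automatic for a Borel probability measure on a separable complete metric space (Ulam). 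I do not foresee a real obstacle: the one delicate point, namely identifying the precise Sobolev exponent $a>1+d/2$ needed for summability, was already accomplished by the covariance calculation in the excerpt, which extracted the factor $\lambda_j^{-a+1}$ after exploiting the identity $\mathcal I = \Delta\,\mathbb I_{\theta_0}^*\mathbb I_{\theta_0}$ to cancel one power of $\lambda_j$.
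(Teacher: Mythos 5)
Your proof is correct and mirrors the paper's: both rest on the second-moment bound $E|Z_j|^2 \lesssim \lambda_j$ (extracted from the covariance computation via $\mathcal I = \Delta\,\mathbb I_{\theta_0}^*\mathbb I_{\theta_0}$) together with Weyl's law to conclude $E\|Z\|^2_{h^{-a}}<\infty$ for $a>1+d/2$, followed by separability of $H^{-a}_0$ and Ulam's theorem for tightness. One small point to clean up: the It\^o--Nisio theorem requires \emph{independent} summands, which the $Z_j e_j$ are not (the $Z_j=\mathbb W(e_j)$ are generally correlated, since $\mathbb I_{\theta_0}^*\mathbb I_{\theta_0}$ is not diagonal in the $\{e_j\}$ basis), but you do not actually need almost-sure convergence of the full sequence -- the $L^2(\P;H^{-a}_0)$-Cauchy property already produces a random element $Z_\infty$ of $H^{-a}_0$ whose law is the required centred Gaussian Borel measure.
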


One may show further that $a>1+d/2$ is necessary for this result to be true (similar to \cite{N20}, Proposition 6, or Proposition 2.7 in \cite{KN25}), in particular $\mathbb W$ does not extend to a Gaussian random function with sample paths in $L^2(\Omega)$.

\subsubsection{Perturbation expansion of the posterior Laplace transform}

We now turn to an asymptotic approximation of the Laplace transform of the random probability measure $\Pi^{\bar \Theta_N}(\cdot|Z^{(N)})$. For any fixed $\psi \in H^{\kappa+2}(\Omega)$ with $\kappa$ as in (\ref{ks}), let $$\bar \psi=\bar \psi_{\theta_0} = \mathcal I^{-1} \Delta \psi,$$ which by (\ref{fishinvbd}) and the Sobolev imbedding theorem defines an element of $$\bar \psi_{\theta_0} \in H^{\kappa}_0(\Omega) \subset C^2(\Omega) \subset L^2(\Omega).$$ Moreover, recall that $P_{E_J}$ denotes the $L^2$-projector onto $E_J = span\{e_j: 0 \le j \le J\}$. For $J_N \simeq N\delta_N^2$ from (\ref{approxrkhs}) let us write $p_N(\cdot) = P_{E_{J_N}}(\cdot)$ and for any $\theta \in H^{\bar \gamma}(\Omega)$ define 
\begin{equation}\label{perturbio}
\theta_{(t, \psi)} := \theta - \frac{t}{\sqrt N} p_N(\bar \psi_{\theta_0}), ~t \in \R,
\end{equation}
which again lies in the support $H_0^{\bar \gamma}(\Omega)$ of the prior (noting also that $\langle p_N(h), e_0 \rangle_{L^2}=0$ whenever $\langle h, e_0 \rangle_{L^2}=0$). Then for $\mathbb I_{\theta_0}$ as in (\ref{linmap}) define
\begin{equation}\label{hatpsi}
\hat \Psi_{N} = \langle \psi, \theta_0 \rangle_{L^2} + \frac{1}{N} \sum_{i=1}^N \langle \mathbb I_{\theta_0} p_N (\bar \psi_{\theta_0})(X_i), \varepsilon_i \rangle_W.
\end{equation}
It will be shown in (\ref{clt}) that $\sqrt N (\hat \Psi_{N} - \langle \theta_0, \psi \rangle_{L^2})$ converges in distribution for fixed $\psi \in C^\infty$ to the $\mathcal N(0, \|\mathbb I_{\theta_0}\bar \psi_{\theta_0}\|_{L^2(\mathcal X)}^2)$ distribution, so $\hat \Psi_N$ serves as an appropriate centering for a Bernstein-von Mises theorem.

\begin{theorem}\label{laplaceapp}Let $\psi \in H^{\kappa+2}(\Omega)$ and consider the localised posterior from (\ref{tvbd}) for any $L>0$. Then we have for every $t \in \R$ that
\begin{align*}
&E^{\Pi^{\bar \Theta_N}}\big[\exp\big\{t \sqrt N \big(\langle \theta, \psi\rangle_{L^2} - \hat \Psi_{N} \big)\big\}|Z^{(N)}\big] = e^{\frac{t^2}{2} \|\mathbb I_{\theta_0}(\bar \psi)\|_{L^2(\mathcal X)}^2} \times \frac{\int_{\bar \Theta_N} e^{\ell_N(\theta_{(t, \psi)})}d\Pi(\theta)}{\int_{\bar \Theta_N} e^{\ell_N(\theta)}d\Pi(\theta)} \times e^{r_N(t, \psi)}
\end{align*}
where $\ell_N$ is as in (\ref{postlog}), and where for every $B>0, t \in \R$, $$\sup_{\|\psi\|_{H^{\kappa+2}} \le B}r_N(t,\psi)=o_{P_{\theta_0}^\mathbb N}(1),~~\text{ as } N \to \infty.$$
\end{theorem}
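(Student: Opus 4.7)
The plan is to establish the theorem by proving a uniform pointwise log-likelihood expansion: writing $h_N = p_N(\bar\psi)$, I shall show that
\begin{equation*}
\ell_N(\theta_{(t,\psi)}) - \ell_N(\theta) = t\sqrt{N}\bigl(\langle \theta,\psi\rangle_{L^2} - \hat\Psi_N\bigr) - \tfrac{t^2}{2}\|\mathbb I_{\theta_0}\bar\psi\|^2_{L^2(\mathcal X)} + \tilde r_N(\theta,t,\psi),
\end{equation*}
with $\sup_{\theta\in\bar\Theta_N,\,\|\psi\|_{H^{\kappa+2}}\le B}|\tilde r_N|\to^{P_{\theta_0}^\mathbb N}0$. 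Granted this, exponentiating, multiplying by $e^{\ell_N(\theta)}\,d\Pi(\theta)$, and integrating over $\bar\Theta_N$ yields the identity: the $\theta$-independent factor $\exp(\tfrac{t^2}{2}\|\mathbb I_{\theta_0}\bar\psi\|^2_{L^2(\mathcal X)})$ pulls out of the integral, the ratio $\int_{\bar\Theta_N}e^{\ell_N(\theta_{(t,\psi)})}d\Pi/\int_{\bar\Theta_N}e^{\ell_N(\theta)}d\Pi$ appears naturally, and $e^{r_N(t,\psi)}$ arises as a $1+o(1)$ correction from integrating the factor $e^{\tilde r_N}$ against the localised tilted measure.

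\textbf{Taylor expansion of the likelihood difference.} Setting $\Delta_i := \G(\theta_{(t,\psi)})(X_i) - \G(\theta)(X_i)$ and using $Y_i = \G(\theta_0)(X_i) + \varepsilon_i$, the Gaussian log-likelihood gives
\begin{equation*}
\ell_N(\theta_{(t,\psi)}) - \ell_N(\theta) = \sum_{i=1}^N \langle \varepsilon_i + \G(\theta_0)(X_i) - \G(\theta)(X_i),\, \Delta_i\rangle_W - \tfrac12 \sum_{i=1}^N |\Delta_i|_W^2.
\end{equation*}
Two applications of Condition \ref{gemol}C) yield $\Delta_i = -\tfrac{t}{\sqrt N}\mathbb I_{\theta_0}[h_N](X_i) + R_i$ and $\G(\theta)(X_i) - \G(\theta_0)(X_i) = \mathbb I_{\theta_0}[\theta-\theta_0](X_i) + Q_2(X_i)$, with quadratic remainders controlled in $L^2(\mathcal X)$ by $\|\theta-\theta_0\|_{L^2}^2 + (t/\sqrt N)^2\|h_N\|_{L^2}^2 \lesssim \tilde\delta_N^2 + t^2/N$ on $\bar\Theta_N$, by (\ref{barthetan}). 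Inserting and regrouping, the three dominant contributions are
\begin{equation*}
\begin{aligned}
(A) &:= -\tfrac{t}{\sqrt N}\sum_i \langle \varepsilon_i,\mathbb I_{\theta_0}[h_N](X_i)\rangle_W, \\
(B) &:= \tfrac{t}{\sqrt N}\sum_i \langle \mathbb I_{\theta_0}[\theta-\theta_0],\mathbb I_{\theta_0}[h_N]\rangle_W(X_i), \\
(C) &:= -\tfrac{t^2}{2N}\sum_i |\mathbb I_{\theta_0}[h_N](X_i)|_W^2,
\end{aligned}
\end{equation*}
plus lower-order cross products involving the remainders $R_i$ and $Q_2$.

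\textbf{Identifying the dominant pieces.} Term $(A)$ equals $-t\sqrt N(\hat\Psi_N - \langle\theta_0,\psi\rangle_{L^2})$ exactly by definition (\ref{hatpsi}). For $(B)$, the law of large numbers approximates it by $t\sqrt N\langle\mathbb I_{\theta_0}[\theta-\theta_0],\mathbb I_{\theta_0}[h_N]\rangle_{L^2(\mathcal X)}$; the key identity (\ref{keyfishinv}) with $h=\theta-\theta_0 \in L^2_0$ then converts this (using that the constant $c_\psi$ in the relation $\mathbb I_{\theta_0}^*\mathbb I_{\theta_0}\bar\psi = \psi + c_\psi$ pairs to zero with the mean-zero $\theta-\theta_0$) together with the projection estimate $\|\bar\psi - p_N\bar\psi\|_{L^2}\lesssim J_N^{-\kappa}\|\bar\psi\|_{H^\kappa}\lesssim J_N^{-\kappa}\|\psi\|_{H^{\kappa+2}}$ from (\ref{approxh}) and (\ref{fishinvbd}), into $t\sqrt N\langle\theta-\theta_0,\psi\rangle_{L^2}$ up to a vanishing error. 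The identical LLN-plus-approximation chain gives $(C) \approx -\tfrac{t^2}{2}\|\mathbb I_{\theta_0}\bar\psi\|^2_{L^2(\mathcal X)}$. Combining $(A)$ with the reduced $(B)$ reproduces exactly $t\sqrt N(\langle\theta,\psi\rangle_{L^2} - \hat\Psi_N)$, matching the target expansion.

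\textbf{Uniform control of the remainder and main obstacle.} The collected remainder $\tilde r_N$ is a sum of four kinds of terms: (i) centred LLN fluctuations of $(B)$ and $(C)$, handled by Bernstein's inequality using $\|\mathbb I_{\theta_0}(\theta-\theta_0)\|_\infty \lesssim \|\theta-\theta_0\|_{H^\zeta}\lesssim\tilde\delta_N(\zeta)\to 0$ on $\bar\Theta_N$ via Condition \ref{gemol}D); (ii) projection defects of order $J_N^{-\kappa}\|\psi\|_{H^{\kappa+2}}$; (iii) noise-remainder cross sums $\sum_i\langle\varepsilon_i,R_i\rangle_W$ which, conditionally on $(\theta,X_1,\dots,X_N)$, are mean-zero Gaussian with variance $\lesssim N(\tilde\delta_N^4 + t^4/N^2)$, hence of stochastic magnitude $\sqrt N\,\tilde\delta_N^2 + O(1/\sqrt N) = o(1)$ once the lower bound on $\bar\gamma$ in (\ref{gs}) is imposed; (iv) deterministic quadratic-linear products bounded by Cauchy-Schwarz. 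The main technical obstacle is uniformity in $\psi$ over the infinite-dimensional ball $\{\|\psi\|_{H^{\kappa+2}}\le B\}$. To surmount it I would deploy a chaining/metric-entropy argument as in (\ref{excessbd}): by (\ref{fishinvbd}) the smoothing map $\psi\mapsto\bar\psi$ is continuous $H^{\kappa+2}\to H^\kappa$, and by Condition \ref{gemol}D) the composite $\psi\mapsto\mathbb I_{\theta_0}[\bar\psi]$ maps into $L^\infty([0,T],L^\infty(\Omega))$; combined with the polynomial metric-entropy of Sobolev balls and Dudley's inequality, this yields uniform Bernstein/Gaussian suprema with logarithmic inflation, so the uniform $o_{P_{\theta_0}^\mathbb N}(1)$ control of $\tilde r_N$ follows, completing the proof.
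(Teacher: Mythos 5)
Your proposal is correct and takes essentially the same route as the paper's proof: the same perturbation expansion of $\ell_N(\theta_{(t,\psi)})-\ell_N(\theta)$ with Condition \ref{gemol}C) for the quadratic term, the Fisher-inverse identity (\ref{keyfishinv}) together with the $c_\psi$ cancellation against the mean-zero $\theta-\theta_0$, the projection-defect bound via (\ref{approxh}) and (\ref{fishinvbd}), and an envelope-plus-entropy programme driven by Condition \ref{gemol}D) to achieve uniformity in $\psi$, which is precisely what the paper's Lemma \ref{oldtimer} carries out. The one place your sketch elides a genuine technical subtlety is the chaining step: the envelope $|e|\,\tilde\delta_N(\zeta)$ is unbounded in the noise variable, so a pure Dudley bound does not directly apply, and the paper instead uses a Talagrand-type moment inequality for unbounded function classes (Theorem 3.5.4 of \cite{GN16}), whose ``Poissonian'' regime is what forces the use of $\tilde\delta_N(\zeta)$ rather than $\tilde\delta_N(0)$ in the weak-variance bound and dictates the condition $\min(\kappa,\bar\gamma)-\zeta>2d$.
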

\begin{proof}
We can restrict to $\psi \in U(\kappa+2, B)$ from (\ref{rball}) since $\int_\Omega \theta =0$ and $\Delta \int_\Omega \psi =0$. To prove the theorem we expand as on p.72 in \cite{N23},
$$\ell_N(\theta)-\ell_N(\theta_{(t)}) = I + II$$
where
\begin{equation}\label{eins}
I = \frac{t}{\sqrt N} \sum_{i=1}^N \langle \varepsilon_i, \mathbb I_{\theta_0}p_{N}(\bar \psi_{\theta_0})(X_i) \rangle_{W} + R_{0, N}(\theta, \psi) - R_{t,N}(\theta, \psi)
\end{equation}
with
\begin{equation}
R_{t, N}(\theta, \psi) = \sum_{i=1}^N \langle \varepsilon_i, \G(\theta_{(t, \psi)})(X_i) - \G(\theta_0)(X_i) - D\G_{\theta_0}(X_i)[\theta_{(t, \psi)} - \theta_0] \rangle_W.
\end{equation}
and where
\begin{equation}\label{zwei}
II = -\frac{N}{2} \|\G(\theta_0)-\G(\theta)\|_{L^2(\mathcal X)}^2 + \frac{N}{2} \|\G(\theta_0)-\G(\theta_{(t,\psi)})\|_{L^2(\mathcal X)}^2 + W_{0,N}(\theta, \psi) + W_{t,N}(\theta, \psi)
\end{equation}
with 
$$|W_{t,N}(\theta, \psi)| = \big|\sum_{i=1}^N\big[|\G(\theta_0)(X_i)-\G(\theta_{(t,\psi)})(X_i)|_W^2 - E \big|\G(\theta_0)(X_i)-\G(\theta_{(t,\psi)})(X_i)|_W^2\big ] \big|.$$
From Markov's inequality and Lemma \ref{oldtimer} below we deduce that the remainder terms $R_{t,N},W_{t,N}$ are all $o_{P_{\theta_0}^\mathbb N}(1)$ uniformly in $\theta \in \bar \Theta_N$ and $\psi \in U(\kappa+2,B)$. 

Next, using Condition \ref{gemol}C, the first two summands in $II$ can be expanded, as on p.73 in \cite{N23}, as
\begin{equation}\label{getthere}
-t \sqrt N \langle \mathbb I_{\theta_0} (\theta - \theta_0), \mathbb I_{\theta_0} p_N[\bar \psi] \rangle_{L^2(\mathcal X)} + \frac{t^2}{2} \|\mathbb I_{\theta_0}p_N(\bar \psi)\|_{L^2(\mathcal X)}^2 + V_N
\end{equation}
where $V_N=o_{P_{\theta_0}^\mathbb N}(1)$ in view of (\ref{deltan}), 
the Cauchy-Schwarz inequality, continuity of $\mathbb I_{\theta_0}: L^2(\Omega) \to L^2([0,T], L^2(\Omega))$, so that $\theta \in \bar \Theta_N$, $\psi \in U(\kappa+2, B)$,
$$N\|\theta_{(t, \psi)}-\theta_0\|_{L^2}^3 \lesssim N \big(\|\theta-\theta_0\|_{L^2} + \frac{t}{\sqrt N} \|p_N(\bar \psi)\|_{L^2} \big)^3 \lesssim N\tilde \delta^3_N(0) = N^{-a},$$ using also that by our hypothesis on $\gamma$,
$$a = \frac{\gamma (\bar \gamma -2) - d(\bar \gamma+1)}{(2\gamma+d)(\bar \gamma +1)}>0.$$ This bound is further uniform in $\psi \in U(\kappa+2, B)$ as (\ref{fishinvbd}), (\ref{ks}) imply that $\bar \psi$ and then also $p_N(\bar \psi)$ are uniformly bounded in $L^2$. Similarly $$\sqrt N \tilde \delta_N(0) J_N^{-\kappa/d} + J_N^{-\kappa/d} = o(1)$$ in view of (\ref{deltan}), (\ref{gs}), (\ref{ks}) and our assumption on $\gamma, \kappa$, so that using also (\ref{approxh}) and again continuity of $\mathbb I_{\theta_0}$, we can replace (\ref{getthere}) by
\begin{equation}
-t \sqrt N \langle \mathbb I_{\theta_0} (\theta - \theta_0), \mathbb I_{\theta_0} \bar \psi \rangle_{L^2} + \frac{t^2}{2} \|\mathbb I_{\theta_0}[\bar \psi]\|_{L^2}^2 + o(1).
\end{equation}
To conclude let us write $$\hat W_N = \frac{t}{\sqrt N} \sum_{i=1}^N \langle  \mathbb I_{\theta_0}p_{N} (\bar \psi_{\theta_0})(X_i), \varepsilon_i \rangle_{W}$$ for the first term in (\ref{eins}). Then using the definitions (\ref{postlog}), (\ref{hatpsi}) as well as the identity (\ref{keyfishinv}) with $h = \theta-\theta_0$, we obtain 
\begin{align*}
&E^{\Pi^{\bar \Theta_N}}\big[\exp\{t \sqrt N \big(\langle \theta, \psi\rangle_{L^2} - \hat \Psi_{N} \big)\}|Z^{(N)}\big] \\
&= \frac{\int_{\bar \Theta_N} e^{t \sqrt N \langle \theta - \theta_0, \psi\rangle_{L^2} - \hat W_N + \ell_N(\theta) - \ell_N(\theta_{(t,\psi)})} e^{\ell_N(\theta_{(t, \psi)})}d\Pi(\theta)}{\int_{\bar \Theta_N} e^{\ell_N(\theta)} d\Pi(\theta)} \\
&= e^{\frac{t^2}{2} \|\mathbb I_{\theta_0}(\bar \psi)\|_{L^2(\mathcal X)}^2} \times \frac{\int_{\bar \Theta_N} e^{\ell_N(\theta_{(t, \psi)})}d\Pi(\theta)}{\int_{\bar \Theta_N} e^{\ell_N(\theta)}d\Pi(\theta)} \times e^{r_N(t,\psi)},
\end{align*}
for a sequence $r_N(t,\psi)$ with the desired properties, so the result follows.
\end{proof}

The following lemma contains the main technical work to control the stochastic remainder terms in (\ref{eins}), (\ref{zwei}) in the previous proof. It is based on applying inequalities from empirical process theory (Ch.3 in \cite{GN16}). We need to upgrade the proofs of Lemmata 4.1.7 and 4.1.8 in \cite{N23} to include the additional uniformity in $\psi$, and to hold under our weaker hypothesis (\ref{zetabus}). 

\begin{lemma}\label{oldtimer} Let $\kappa$ be as in (\ref{ks}) and $\bar \Theta_{N,L}$ as in (\ref{barthetan}) with $\bar \gamma$ as in (\ref{gs}).  We have for every fixed $t \in \R$ and $L>0$ that $$E\sup_{\theta \in \bar \Theta_{N,L}, \psi \in U(\kappa+2,1)} \Big|\sum_{i=1}^N \langle \varepsilon_i, \G(\theta_{(t, \psi)})(X_i) - \G(\theta_0)(X_i) - D\G_{\theta_0}(X_i)[\theta_{(t, \psi)} - \theta_0] \rangle_W\Big| = o(1)$$ as well as $$E\sup_{\theta \in \bar \Theta_{N,L}, \psi \in U(\kappa+2,1)}\Big|\sum_{i=1}^N\big[|\G(\theta_0)(X_i)-\G(\theta_{(t,\psi)})(X_i)|_W^2 - E \big|\G(\theta_0)(X_i)-\G(\theta_{(t,\psi)})(X_i)|_W^2\big]\Big| = o(1).$$
\end{lemma}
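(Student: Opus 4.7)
My plan is to treat both suprema as empirical processes indexed by the joint class $(\theta,\psi)\in\bar\Theta_{N,L}\times U(\kappa+2,1)$, derive uniform envelope and variance bounds from Conditions \ref{gemol}C)--D), and close the argument by Dudley's metric entropy bound together with a Talagrand/Bousquet concentration inequality, exploiting that both indexing sets are compactly embedded in Sobolev spaces of high regularity.

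The preparatory step is to control the perturbations $h_{\theta,\psi}:=\theta_{(t,\psi)}-\theta_0=(\theta-\theta_0)-(t/\sqrt N)p_N(\bar\psi_{\theta_0})$ uniformly. By (\ref{fishinvbd}), $\|\bar\psi_{\theta_0}\|_{H^\kappa}\lesssim\|\psi\|_{H^{\kappa+2}}\le 1$; and since $\kappa>2d+\zeta$ by (\ref{ks}), the projections $p_N(\bar\psi_{\theta_0})$ are bounded in $H^{\zeta}\cap L^\infty$ uniformly in $\psi$. Combining with the inclusions defining $\bar\Theta_{N,L}$ yields
\[
\|h_{\theta,\psi}\|_{L^2}\lesssim \tilde\delta_N,\qquad \|h_{\theta,\psi}\|_{H^\zeta}\lesssim \tilde\delta_N(\zeta),\qquad \|h_{\theta,\psi}\|_{H^{\bar\gamma}}\lesssim 1,
\]
uniformly in $(\theta,\psi)$. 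Condition \ref{gemol}C) then gives $\|f_{\theta,\psi}\|_{L^2(\mathcal X)}\lesssim \tilde\delta_N^2$ for the nonlinear remainder $f_{\theta,\psi}:=\G(\theta_{(t,\psi)})-\G(\theta_0)-\mathbb I_{\theta_0}[h_{\theta,\psi}]$, while (\ref{zetabus}) applied both to $\G$ and (via the second half of Condition \ref{gemol}D)) to $\mathbb I_{\theta_0}$ yields the envelope bound $\|f_{\theta,\psi}\|_{L^\infty(\mathcal X)}\lesssim \|h_{\theta,\psi}\|_{H^\zeta}=o(1)$. Analogously $\|g_{\theta,\psi}\|_\infty\lesssim \tilde\delta_N(\zeta)^2$ and $\|g_{\theta,\psi}\|_{L^1(\mathcal X)}\lesssim \tilde\delta_N^2$ for the quadratic integrand $g_{\theta,\psi}:=|\G(\theta_0)-\G(\theta_{(t,\psi)})|_W^2$ of the second empirical process.

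For the first supremum I would condition on the design so that the inner sum is a centred Gaussian process in $(\theta,\psi)$ with conditional variance $V_N(\theta,\psi)=\sum_{i\le N}|f_{\theta,\psi}(X_i)|_W^2$. A preliminary Talagrand bound, using the envelope $O(1)$ and $L^2$-size $\tilde\delta_N^2$, shows $\sup V_N\lesssim N\tilde\delta_N^4$ with high probability. Dudley's inequality then reduces matters to the $L^2(\mathcal X)$-entropy of $\{f_{\theta,\psi}\}$. Because $(\theta,\psi)\mapsto f_{\theta,\psi}$ is Lipschitz into $L^2(\mathcal X)$ by Conditions \ref{gemol}B)--C), and the balls $U(\bar\gamma,L)$ and $U(\kappa+2,1)$ have polynomial covering numbers in weaker Sobolev norms (Prop.~A.3.1 in \cite{N23}), the entropy integral is $O(1)$. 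The resulting maximal bound is of order $\sqrt N\tilde\delta_N^2$ up to logarithms, which is $o(1)$ by (\ref{deltan}) thanks to the choices (\ref{gs}) and $\gamma>3\zeta+3d/2+2$; conditional Markov then gives the claim in expectation.

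For the second supremum I would apply Bousquet's concentration inequality (Thm.~3.3.9 in \cite{GN16}) to the centred empirical process indexed by $\mathcal G_N=\{g_{\theta,\psi}\}$: envelope $\tilde\delta_N(\zeta)^2$, weak variance $\lesssim\tilde\delta_N(\zeta)^2\tilde\delta_N^2$, with a bounded entropy integral by the same Sobolev-based covering estimates. The resulting bound is of order $\sqrt N\tilde\delta_N(\zeta)\tilde\delta_N + \tilde\delta_N(\zeta)^2$, which is $o(1)$ under the same numerology. The main obstacle I expect is the uniformity in $\psi$: one must verify that the $\psi$-dependence enters the chaining argument only through the smooth Lipschitz map $\psi\mapsto p_N(\bar\psi_{\theta_0})$, with Lipschitz constants independent of $N$ and with the composed image lying in an $H^\zeta$-ball whose entropy is handled by the $\kappa$-regularity assumption. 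This is precisely the role played by the quantitative gaps in (\ref{ks})--(\ref{gs}), and it is also where the weaker $L^\infty$-Lipschitz hypothesis (\ref{zetabus}) -- compared to the stronger one in \cite{N23} -- must be carefully tracked through both the envelope and the entropy step.
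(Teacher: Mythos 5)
Your decomposition --- envelope from Condition D, variance from B/C, entropy via Sobolev covering numbers, and numerology driven by (\ref{ks})--(\ref{gs}) --- matches the paper's blueprint, but the conditional-Gaussian route has a genuine gap at the entropy step. After conditioning on the design, Dudley's bound for the Gaussian process $\sum_i\langle\varepsilon_i,f_{\theta,\psi}(X_i)\rangle$ is governed by the \emph{empirical} metric $d_N^2(f,f')=\sum_{i\le N}|f(X_i)-f'(X_i)|^2$, which is random in the design; the $L^2(\mathcal X)$-Lipschitz property from Conditions B/C that you invoke bounds only the population metric and does not control $d_N$ pathwise. What is required is a design-uniform ($L^\infty$) modulus of continuity, and that is exactly what Condition D's $H^\zeta\to L^\infty$ Lipschitz estimate for $\G$ and $\mathbb I_{\theta_0}$ provides: it lets one compare $d_N$ to the $H^\zeta$-metric on $(\theta,\bar\psi)$, which is where the Sobolev covering bounds for $U(\bar\gamma,L)$ and $U(\kappa,c)$ actually enter and where the margins $\kappa-\bar\kappa>d/2$ and $\min(\kappa,\bar\gamma)-\zeta>2d$ are consumed. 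Your closing remark that the $L^\infty$-Lipschitz must be ``tracked through the entropy step'' is the correct diagnosis, but it is at odds with the earlier claim that the $L^2(\mathcal X)$-Lipschitz controls the Dudley integral; the $L^\infty$ modulus must appear in the entropy, not merely in the envelope.

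Two further points. The ``preliminary Talagrand bound'' for $\sup_{\theta,\psi}V_N$ is close to circular: controlling $\sup_{\theta,\psi}\big|\sum_i|f_{\theta,\psi}(X_i)|^2-E(\cdot)\big|$ is an empirical process of the same kind as the second assertion of the lemma and still requires its own chaining argument; a concentration inequality only bounds the deviation around an expectation that itself needs estimating. The paper sidesteps both issues by a single unconditional application of the moment inequality for suprema of empirical processes (Theorem 3.5.4 in \cite{GN16}) with envelope $F_N(e,x)\simeq|e|\,\tilde\delta_N(\zeta)$ and weak-variance proxy $\sigma_N^2\simeq\tilde\delta_N^2(\zeta)+1/N$ \emph{deliberately} matched to $\|F_N\|$ --- the paper explicitly notes that the true variance $\tilde\delta_N^2(0)$ is smaller but cannot be exploited because the entropy functional would then be dominated by the ``Poissonian'' regime of Bernstein's inequality. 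Your sharper Condition C bound $\|f_{\theta,\psi}\|_{L^2}\lesssim\tilde\delta_N^2$ therefore buys nothing without a different chaining device, and moreover applying Condition C to $h=h_{\theta,\psi}$ requires $\|h\|_{H^{\bar\gamma}}$ bounded by a fixed $B$, which for the $N^{-1/2}p_N(\bar\psi_{\theta_0})$ part is not immediate since $\bar\psi\in H^\kappa$ need not land in $H^{\bar\gamma}$ uniformly; the paper avoids this by using Condition B (Lipschitz in $L^2$ on $U(1,B)$) rather than C for the variance.
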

\begin{proof}
Let us deal with the first supremum: setting 
\begin{align*}
g_{\theta,\psi} &=  \G\big( \theta - \frac{t}{\sqrt N} p_N(\bar \psi_{\theta_0})\big) - \G(\theta_0)- D\G_{\theta_0}\Big[ \theta - \frac{t}{\sqrt N} p_N(\bar \psi_{\theta_0}) - \theta_0\Big].
\end{align*}
we need to bound
\begin{equation}\label{supa}
E \sup_{\theta \in \bar \Theta_{N,L}, \psi \in U(\kappa+2, 1)}\big|\sum_{i=1}^N \langle \varepsilon_i, g_{\theta,\psi}(X_i)\rangle_W \big| = E \sup_{\theta \in \bar \Theta_{N,L}, \psi \in U(\kappa+2, 1)}\big|\sum_{i=1}^N f_{\theta,\psi}(Z_i) \big| 
\end{equation}
where the $Z_i$ are iid copies of the random variable $Z=(\varepsilon_1, X_1)$ on $\R \times [0,T] \times \Omega$ from after (\ref{modelG}), and where $$f_{\theta,\psi}(z)= \langle e, g_{\theta,\psi}(x) \rangle_W,~~z=(e,x) \in \R \times [0,T] \times \Omega,$$ are centred random variables, $Ef_{\theta, \psi}(Z)=0$. Let us set $W=\mathbb R$ for simplicity, the general case is proved after only notational adjustment. By Condition \ref{gemol}C we can bound the `weak variances' uniformly in $\psi, \theta$ as
$$Ef_{\theta, \psi}^2(Z) \le C \big\| \theta - \theta_0 - \frac{t}{\sqrt N} p_N(\bar \psi_{\theta_0}) \big\|_{L^2(\Omega)}^4 \lesssim \tilde \delta^4_N(\zeta) + \frac{1}{N^2}  \equiv \sigma_N^2,$$ where we note that $\psi \in U(\kappa+2, B)$ implies that $\bar \psi$ and then also $p_N(\bar \psi)$ are uniformly bounded in $H^{\kappa} \subset L^2$ by (\ref{fishinvbd}), (\ref{ks}). [This bound actually holds for $\zeta=0$ but the chaining inequality to be used below with envelopes $F_N$ would not allow us to take advantage of it because of the `Poissonian' regime of Bernstein's inequality.] Further from Condition \ref{gemol}D and again (\ref{ks}) we have the uniform bound 
$$\|g_{\theta,\psi}\|_\infty \lesssim \| \theta - \theta_0\|_{H^{\zeta}} + \frac{t}{\sqrt N} \|p_N(\bar \psi_{\theta_0})\|_{H^{\zeta}} \lesssim \tilde \delta_N(\zeta) + \frac{1}{\sqrt N} \le v \tilde \delta_N(\zeta),~~ \text{ some } v>0.$$ We can hence take point-wise `envelope' functions $$|f_\theta(e,x)|\le F_N(e,x) \equiv v |e| \tilde \delta_N(\zeta),~~ e \in \R, x \in [0,T] \times \Omega,$$ and introduce, for any finitely supported probability measure $Q$ on $\R \times [0,T] \times \Omega$, the constants $$s^2_Q= E_Q[e^2],~~\|F_N\|_{L^2(Q)} = s_Q v \tilde \delta_N(\zeta).$$ Then for any $\theta \in \bar \Theta_{N,L}, \psi \in U(\kappa+2, 1)$, and using Condition \ref{gemol}D,
\begin{align*}
\|f_{\theta, \psi} - f_{\theta', \psi'}\|_{L^2(Q)} &\le \|f_{\theta, \psi} - f_{\theta, \psi'}\|_{L^2(Q)} + \|f_{\theta, \psi'} - f_{\theta', \psi'}\|_{L^2(Q)} \\
&\lesssim s_Q\|g_{\theta, \psi} - g_{\theta, \psi'}\|_{\infty} + s_Q\|g_{\theta, \psi'} - g_{\theta', \psi'}\|_{\infty} \\
&\lesssim \frac{s_Q}{\sqrt N}\|p_N(\bar \psi_{\theta} - \bar \psi'_{\theta})\|_{H^\zeta} + s_Q\|\theta-\theta'\|_{H^\zeta}\\
&\le \frac{1}{\sqrt N v\tilde \delta_N(\zeta)} \|F_N\|_{L^2(Q)} \|\bar \psi - \bar \psi'\|_{H^\zeta} + \frac{1}{v\tilde \delta_N(\zeta)}\|F_N\|_{L^2(Q)}\|\theta-\theta'\|_{H^\zeta}.
\end{align*}
The logarithm of the $u$-covering numbers of $p_N(U(\kappa, B)) \times U(\bar \gamma, 1)$ by balls of $H^\zeta$-radius at most $u$ can be bounded by $(A/u)^{d/(\min(\kappa, \bar \gamma)-\zeta)}$ for some $A>0$, in view of Proposition A.3.1 in \cite{N23}, and so using Theorem 3.5.4 in \cite{GN16} with entropy functional $$\mathscr J_N(s) \lesssim \int_0^s \Big(\frac{1}{v \tilde \delta_N(\zeta)u}\Big)^{\frac{d}{2\beta}}du,~s>0, ~\beta=\min(\kappa, \bar \gamma)-\zeta$$ and $U_N= v\tilde \delta_N(\zeta) \max_{i\le N} |\varepsilon_i|$ (such that $EU_N^2 \lesssim \tilde \delta^2_N(\zeta) \log N$ in view of Lemma 2.3.3 in \cite{GN16}) we can estimate (\ref{supa}) from above by a constant multiple of
\begin{equation}
\sqrt N \max \Big[\tilde \delta_N(\zeta) \mathscr J_N\big(\frac{\sigma_N}{\tilde \delta_N(\zeta)}\big), \frac{\sqrt {\log N} \tilde \delta^3_N(\zeta) \mathscr J_N^2(\sigma_N/\tilde \delta_N(\zeta))}{\sqrt N \sigma_N^2}\Big].
\end{equation}
Straightforward calculations show that the first term in the preceding maximum is of order
$$\sqrt N \int_0^{\sigma_N} u^{-d/2\beta}du \lesssim \sqrt N \tilde \delta_N^{2- d/\beta}(\zeta)$$ while the second is of order
$$\sqrt {\log N} \tilde \delta^{1-2d/\beta}_N(\zeta).$$
We see from this that we require $\beta=\min(\kappa, \bar \gamma)-\zeta >2d$ for this term to converge to zero, and then we also have $$\sqrt N \tilde \delta^{2- d/\beta}_N(\zeta) \to 0 ~\text{ since }~ \tilde \delta^3_N(\zeta) = o(N^{-1}),$$ in view of (\ref{ks}), (\ref{gs}), (\ref{deltan}) and our hypothesis $\gamma>3 \zeta + (3d/2) +2$. 

The second empirical process in the lemma is of smaller order of magnitude than (\ref{supa}) and treated by similar (in fact simpler) arguments, as in the proof of Lemma 4.1.8 in \cite{N23}, and left to the reader.
\end{proof}

To proceed, the ratio of integrals featuring in Theorem \ref{laplaceapp} can be written as
\begin{equation}\label{postbus00}
\frac{\int_{\bar \Theta_N} e^{\ell_N(\theta_{(t, \psi)})}d\Pi(\theta)}{\int_{\bar \Theta_N} e^{\ell_N(\theta)}d\Pi(\theta)}  = \frac{\int_{\bar \Theta_N} e^{\ell_N(\theta_{(t,\psi)})} \frac{d\Pi(\theta)}{d\Pi_t(\theta)} d\Pi_t(\theta)}{\int_{\bar \Theta_N} e^{\ell_N(\theta)}d\Pi(\theta)}
\end{equation}
where $\Pi_t$ is the Gaussian law of the shifted vector $\theta_{(t,\psi)} = \theta - tp_N(\bar \psi_{\theta_0})/\sqrt N$ from (\ref{perturbio}) with $\theta \sim \Pi$. By construction the projections $p_N(\bar \psi_{\theta_0}) $ lie in $E_{J_N} \ominus \{constants\}$ and hence in the RKHS $H_0^\gamma = \mathcal H$ of the prior from Condition \ref{priorgen}, which has `rescaled' RKHS norm $\|\cdot\|_{\mathcal H_N} = \sqrt N \delta_N \|\cdot\|_{\mathcal H}$. Applying the Cameron-Martin theorem (e.g., Theorem 2.6.13 in \cite{GN16}) we obtain an expression for the Radon-Nikodym density and in turn the bound
\begin{align*}
\Big | \log \frac{d\Pi}{d\Pi_t}(\theta) \Big |&\le \frac{|t|}{\sqrt N} \big|\langle \theta, p_{N} (\bar \psi_{\theta_0})\rangle_{\mathcal H_N}\big| + \frac{t^2}{N} \|p_{N} (\bar \psi_{\theta_0})\|^2_{\mathcal H_N}.
\end{align*}
For $\psi \in U(\kappa+2, 1)$ we know from (\ref{fishinvbd}) that $\bar \psi$ lies in $U(\kappa, c)$ for some $c>0$. Thus on the event $\bar \Theta_N$ from (\ref{barthetan}) we bound the modulus of the first term in the exponent by
$$\frac{|t|}{\sqrt N} |\langle \theta, p_N(\bar \psi_{\theta_0})\rangle_{\mathcal H_N}| \lesssim |t| \delta_N K_N \lesssim  N^{(-\gamma + \frac{d}{2} + \frac{\max(\gamma-\kappa,0)}{2d})/(2\gamma+d)} \to 0$$ as $N \to \infty$,
in view of (\ref{approxrkhs}) and our hypothesis on $\gamma$. Also by (\ref{approxh}) with $r = \kappa$ and (\ref{fishinvbd})
\begin{equation}\label{quadbd}
\frac{t^2}{N} \|p_{N} (\bar \psi_{\theta_0})\|^2_{\mathcal H_N} = t^2 \delta_N^2 \|p_{N} (\bar \psi_{\theta_0})\|^2_{\mathcal H} \lesssim  \delta_N^2 \max(J_N^{2\gamma-2\kappa}, 1) \|\psi\|^2_{H^{\kappa+2}} \to 0
\end{equation}
 as $N \to \infty$. Both preceding bounds are uniform in $\psi \in U(\kappa+2, 1)$.

For shifted integration regions $$\bar \Theta_{N,t, \psi} = \{\vartheta = \theta_{(t,\psi)} : \theta \in \bar \Theta_N\}$$ and after renormalising by $\int_{H^1} e^{\ell_N(\theta)} d\Pi(\theta)$, we thus deduce that the ratio in (\ref{postbus00}) equals  
\begin{align} \label{postbus0}
(1+o(1))\frac{\int_{\bar \Theta_{N, t, \psi}} e^{\ell_N(\vartheta)}d\Pi(\vartheta)}{\int_{\bar \Theta_N} e^{\ell_N(\theta)}d\Pi(\theta)}
&=(1 +o(1)) \frac{\Pi(\bar \Theta_{N,t, \psi}|Z^{(N)})}{\Pi(\bar \Theta_{N}|Z^{(N)})}  \\
& \le (1 +o(1)) \frac{1}{\Pi(\bar \Theta_{N}|Z^{(N)})} =(1+o_{P_{\theta_0}^\mathbb N}(1)) \label{postbus000}
\end{align}
for fixed $t$ and using Theorem \ref{postcont}, an upper bound that is uniform in $\psi$. 

Finally, for any fixed $\psi \in L^2_0 \cap C^\infty$ one has 
\begin{equation}\label{shifpost}
\Pi(\bar \Theta_{N,t, \psi}|Z^{(N)}) \to 1
\end{equation}
in probability: To see this notice that the four defining inequalities in (\ref{barthetan}) of Theorem \ref{postcont} are preserved after adding the perturbation $tp_N(\bar \psi)/\sqrt N$ and increasing the constant $L$ to $2L$; indeed since $\psi$ is smooth so is $\bar \psi$ in view of (\ref{fishinvbd}), and so are its projections onto $E_J$, hence $N^{-1/2}\|p_N(\bar \psi)\|_{H^\xi} \lesssim N^{-1/2} = o(\tilde \delta_N(\xi))$ for $0 \le \xi \le \bar \gamma.$ Combined with Condition \ref{gemol}B), (\ref{quadbd}), the limit (\ref{shifpost}) now follows from Theorem \ref{postcont}. Therefore, for smooth $\psi \in L^2_0$ the ratio in (\ref{postbus0}) is
\begin{equation} \label{postlim}
\frac{\Pi(\bar \Theta_{N,t, \psi}|Z^{(N)})}{\Pi(\bar \Theta_{N}|Z^{(N)})} \to^{P_{\theta_0}^N} 1,
\end{equation}
which will be required for the convergence of the finite-dimensional distributions below.

\subsubsection{Convergence in function space}

We initially prove Theorem \ref{mainbvm} with a centring $\hat \theta_N$ that is different from $\tilde \theta_N=E^\Pi[\theta|Z^{(N)}]$: Evaluating $\hat \Psi_N=\hat \Psi_{N,j}$ from (\ref{hatpsi}) at all the trigonometric polynomials $\psi = e_j, j \ge 1,$ from (\ref{eftrig}), we obtain a (via $Z^{(N)}$ random) Fourier series 
\begin{equation} \label{hatter}
\hat \theta_N = \sum_{j \ge 1}\hat \Psi_{N,j} e_j.
\end{equation}
Using independence, Conditions \ref{gemol}C, F, (\ref{weyl}) and $k>d/2$ we have for some $C>0$
\begin{align}\label{momhat}
NE_{\theta_0}^\mathbb N \|\hat \theta_N - \theta_0\|^2_{H^{-k-2}} & \lesssim \frac{1}{N} \sum_{j=1}^\infty \lambda_j^{-k-2}\sum_{i=1}^N E_{\theta_0}^\mathbb N  \langle \mathbb I_{\theta_0}(p_N(\bar \psi_j))(X_i), \varepsilon_i \rangle_W^2  \\
&=  \sum_j  \lambda_j^{-k-2} \|\mathbb I_{\theta_0}[p_N(\bar e_j)]\|_{L^2(\mathcal X)}^2  \lesssim  \sum_j  \lambda_j^{-k-2} \|\bar e_j\|_{L^2(\Omega)}^2  \notag \\
&= \sum_j  \lambda_j^{-k-2} \langle \mathcal I^{-1} \Delta e_j, \mathcal I^{-1} \Delta e_j \rangle_{L^2(\Omega)} \notag \\ 
&\le \sum_j  \lambda_j^{-k} \|\mathcal I^{-2} e_j\|_{H^{\eta_0}} \|e_j\|_{H^{-\eta_0}}  \lesssim  \sum_j  \lambda_j^{-k} \leq C, \notag
\end{align}
 which in particular implies that $\hat \theta_N$ defines a random variable in $H_0^{-k-2}$. 

Recalling Proposition \ref{wassconv} with $T_N = \hat \theta_N$, we now study the `localised' posterior, specifically the resulting conditional laws
\begin{equation}\label{bartau}
\bar \tau_N = Law \big(\sqrt N (\theta - \hat \theta_N)|Z^{(N)}\big), ~\theta \sim \Pi^{\bar \Theta_N}(\cdot|Z^{(N)}),
\end{equation}
 in $H^{-k-2}$. Further recall the law $\mathcal N_{\theta_0}$ from Proposition \ref{limmeas}. 
 
 \begin{proposition}
 Let $W_1$ be the Wasserstein distance on $H^{-k-2}$ from (\ref{wassdef}). We have  $$W_1(\bar \tau_N, \mathcal N_{\theta_0}) \to^{P_{\theta_0}^\mathbb N} 0 ~~\text{ as } N \to \infty.$$
 \end{proposition}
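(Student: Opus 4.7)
The plan is to combine finite-dimensional convergence of Laplace transforms (from Theorem~\ref{laplaceapp}) with a uniform posterior second-moment bound in $H^{-k-2}$, and then upgrade weak convergence to $W_1$ via uniform integrability. First I would derive the uniform second-moment bound. Write $X_\psi := \sqrt N(\langle \theta, \psi\rangle_{L^2} - \hat\Psi_N(\psi))$ for $\theta \sim \Pi^{\bar\Theta_N}(\cdot|Z^{(N)})$ and $\psi \in H_0^{\kappa+2}$. Applied at both $t$ and $-t$, Theorem~\ref{laplaceapp}, combined with the uniform-in-$\psi$ upper bound (\ref{postbus000}) on the shifted-prior ratio, yields for each fixed $t \in \R$,
\begin{equation*}
\sup_{\|\psi\|_{H^{\kappa+2}}\le 1} E^{\Pi^{\bar\Theta_N}}\big[e^{tX_\psi}+e^{-tX_\psi}\,\big|\,Z^{(N)}\big] \le 2(1+o_{P_{\theta_0}^{\mathbb N}}(1))\, e^{t^2 C/2},
\end{equation*}
where $C := \sup_{\|\psi\|_{H^{\kappa+2}}\le 1}\|\mathbb I_{\theta_0}\bar\psi\|_{L^2(\mathcal X)}^2 < \infty$ by (\ref{fishinvbd}) and continuity of $\mathbb I_{\theta_0}$. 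Since $x^2 \le e^x + e^{-x}$, this gives $\sup_{\|\psi\|_{H^{\kappa+2}}\le 1} E^{\Pi^{\bar\Theta_N}}[X_\psi^2|Z^{(N)}] = O_{P_{\theta_0}^{\mathbb N}}(1)$. Taking $\psi_j := \lambda_j^{-(\kappa+2)/2} e_j$, summing over $j$, and using Weyl's law (\ref{weyl}) together with the gap $\kappa < k - d/2$ from (\ref{ks}), one obtains
\begin{equation*}
E^{\bar\tau_N}\|\vartheta\|_{H^{-k-2}}^2 \;=\; N\, E^{\Pi^{\bar\Theta_N}}\big[\|\theta-\hat\theta_N\|_{H^{-k-2}}^2\,\big|\,Z^{(N)}\big] \;\lesssim\; \sum_{j\ge 1}\lambda_j^{\kappa-k} \;=\; O_{P_{\theta_0}^{\mathbb N}}(1).
\end{equation*}

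Next I would establish finite-dimensional convergence and tightness. For any finite combination $\psi = \sum_j t_j e_j \in C^\infty \cap L^2_0$, linearity of $\psi \mapsto \hat\Psi_N(\psi)$ together with the definition (\ref{hatter}) of $\hat\theta_N$ give $\sum_j t_j \sqrt N\langle\vartheta,e_j\rangle_{L^2} = X_\psi$ for $\vartheta \sim \bar\tau_N$. Theorem~\ref{laplaceapp} combined with the pointwise limit (\ref{postlim}) then yields $E^{\bar\tau_N}[e^{X_\psi}] \to^{P_{\theta_0}^{\mathbb N}} \exp(\|\mathbb I_{\theta_0}\bar\psi\|_{L^2(\mathcal X)}^2/2)$, which is the Laplace transform of $\mathcal N_{\theta_0}$ evaluated at $\psi$. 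By the Cram\'er--Wold device, every finite-dimensional projection $P_J\bar\tau_N$ converges weakly to $P_J\mathcal N_{\theta_0}$ in $P_{\theta_0}^{\mathbb N}$-probability. Combined with the tail bound
\begin{equation*}
\bar\tau_N\big(\|(I-P_J)\vartheta\|_{H^{-k-2}}>\eta\big) \;\lesssim\; \eta^{-2}\sum_{j>J}\lambda_j^{\kappa-k}
\end{equation*}
furnished by the second-moment estimate above (uniformly small for large $J$), this yields asymptotic tightness in $H^{-k-2}$ and hence weak convergence $\bar\tau_N \to \mathcal N_{\theta_0}$ in $H^{-k-2}$, in $P_{\theta_0}^{\mathbb N}$-probability. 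Finally, the uniform second moment bound furnishes uniform integrability of $\vartheta\mapsto\|\vartheta\|_{H^{-k-2}}$ under $\bar\tau_N$, so weak convergence upgrades to $W_1$-convergence by Theorem~6.9 of \cite{V09}.

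The main obstacle is establishing uniformity in $\psi \in U(\kappa+2,1)$ of the Laplace-transform bound in Theorem~\ref{laplaceapp}: this relies on the stated uniformity of $r_N(t,\psi)$ (traced through Lemma~\ref{oldtimer}), the uniform bound (\ref{quadbd}) on the Cameron--Martin quadratic term, and the fact that Theorem~\ref{postcont} forces $\Pi(\bar\Theta_N|Z^{(N)}) \ge 1-e^{-bN\delta_N^2}$, making (\ref{postbus000}) a uniform-in-$\psi$ upper bound. Converting uniform second-moment control of $X_\psi$ into the $H^{-k-2}$ norm estimate then depends critically on the summability $\sum_j \lambda_j^{\kappa-k}<\infty$, which is exactly ensured by the spectral gap $k > \kappa + d/2$ built into (\ref{ks}).
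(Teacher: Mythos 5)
Your proposal is essentially the same route as the paper's, relying on the identical ingredients: the Laplace-transform bound of Theorem~\ref{laplaceapp} (uniform in $\psi$ via (\ref{postbus000}) and (\ref{quadbd})), the second-moment estimate obtained by exponentiating at $\psi_j=\lambda_j^{-(\kappa+2)/2}e_j$ and summing with the spectral gap $k>\kappa+d/2$, the Cram\'er--Wold/Laplace argument for finite-dimensional limits, and Villani's Theorem~6.9 to upgrade weak convergence to $W_1$. The only organisational difference is the order of operations: the paper first splits $W_1(\bar\tau_N,\mathcal N_{\theta_0})$ by the triangle inequality into a finite-dimensional Wasserstein term plus two quantitative projection-tail terms (bounded by your second-moment estimate and, for $\mathcal N_{\theta_0}$, by Fernique), and only then applies Theorem~6.9 of \cite{V09} \emph{at the finite-dimensional level} $E_J$; you instead first prove weak convergence in the full space $H^{-k-2}$ (fdd limits plus asymptotic tightness) and apply Theorem~6.9 \emph{at the infinite-dimensional level}. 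Your variant works but requires a little extra bookkeeping to combine ``weak convergence in $P_{\theta_0}^{\mathbb N}$-probability'' of the random measures $\bar\tau_N$ with the moment convergence (typically via a subsequence argument, since the hypotheses of Theorem~6.9 are stated for deterministic sequences); the paper's $a+b+c$ split sidesteps this by making the choice of truncation level $J$ explicit and independent of $N$, so that each term is estimated separately and quantitatively. Your $x^2\le e^x+e^{-x}$ (applied at $t=\pm1$) is the correct way to handle the sign, matching what the paper does implicitly when it writes $x^2\le 2e^x$ for $x\ge0$.
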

\begin{proof} 
For $J \in \N$ to be chosen, approximations spaces $E_J$ from (\ref{ej}), and $Z_N, Z$ random variables in $H^{-k-2}$ with laws $\bar \tau_N, \mathcal N_{\theta_0}$ (conditional on the data vector $Z^{(N)}$), respectively, we define projected laws $$\bar \tau_{N,J}= Law ((P_{E_J}(Z_N)), ~~n_J = Law(P_{E_J}(Z)).$$ From the triangle inequality, (\ref{wassdef}) and the Cauchy-Schwarz inequality we have 
\begin{align} \label{wasser3e}
W_1(\bar \tau_N, \mathcal N_{\theta_0}) &\le W_1(\bar \tau_{N,J}, n_J) + W_1(\bar \tau_N, \bar \tau_{N,J}) + W_1(n_J, \mathcal N_{\theta_0}) \notag \\
&\le W_1(\bar \tau_{N,J}, n_J) + (E\|P_{E_J}(Z_N)- Z_N\|^2_{H^{-k-2}})^{\frac{1}{2}} + (E \|P_{E_J}(Z) -Z\|^2_{H^{-k-2}})^{\frac{1}{2}} \notag \\
&= a + b +c.
\end{align}
 Let $\epsilon>0$ be given. We will show that each of the terms a), b), c) can be made less than $\epsilon/3$ by appropriate choice of $J$, all $N$ large enough, and with $P_{\theta_0}^\mathbb N$-probability less than $\epsilon$, so that the proposition follows.

\smallskip

Term c): Since $\Pr(\|Z\|_{H^{-a}} <\infty)=1$ for $a>1+d/2$ by Proposition \ref{limmeas}, Fernique's theorem (e.g., Theorem 2.1.20 and Exercise 2.1.5 in \cite{GN16}) implies that $E\|Z\|^2_{H^{-a}}<\infty$. Thus for any $\epsilon>0$ we can choose $J=J(\epsilon, k, d)$ fixed but finite so that
\begin{align} \label{gausstight}
E \|P_{E_J}(Z) -Z\|^2_{h^{-k-2}} \lesssim E\sum_{j>J} \lambda_j^{-k-2}|\langle Z, e_j \rangle_{L^2}|^2 \le \lambda_J^{a-k-2} E\|Z\|^2_{H^{-a}} <(\epsilon/3)^2
\end{align}
follows, using also $k+2>a$ and (\ref{ks}), (\ref{weyl}) in the last inequality.

\smallskip

Term b): Theorem \ref{laplaceapp} with choice $\psi= \lambda_j^{-(\kappa+2)/2}e_j \in U(\kappa+2, 1)$ 
and corresponding centring $$\hat \Psi_{N, j, \kappa}= \lambda_j^{-(\kappa+2)/2} \hat \Psi_{N,j}$$ from (\ref{hatpsi})
imply, using also (\ref{postbus000}), that for some $C,c>0$ and all $j \ge 1$, with $P_{\theta_0}^\mathbb N$-probability converging to one,
\begin{equation}\label{fullbd}
E^{\Pi^{\bar \Theta_N}}\Big[\exp\big\{\sqrt N \big(\langle \theta, \lambda_j^{-(\kappa+2)/2}e_j \rangle_{L^2} - \hat \Psi_{N, j, \kappa} \big)\big\}|Z^{(N)}\Big] \le c e^{\frac{1}{2} \|\mathbb I_{\theta_0}(\lambda_j^{-(\kappa+2)/2}\bar e_j)\|_{L^2(\mathcal X)}^2} \le C,
\end{equation}
where in the last inequality we used $\sup_{j \ge 1}\|\mathbb I_{\theta_0}(\lambda_j^{-(\kappa+2)/2}\bar e_j)\|_{L^2}^2 \le c<\infty$, proved just as in (\ref{momhat}). Then since $x^2 \le 2e^x$ for $x \ge 0$, we can choose $J=J(\epsilon, k, \kappa)$ large enough such that with $P_{\theta_0}^\mathbb N$-probability as close to one as desired,
\begin{align}\label{fullbd1}
E \|P_{E_J}(Z_N) -Z_N\|^2_{H^{-k-2}} &=  \sum_{j>J} \lambda_j^{\kappa+2-k-2}  E^{\Pi^{\bar \Theta_N}}\big[N|\langle \theta, \lambda_j^{-(\kappa+2)/2} e_j \rangle_{L^2} - \hat \Psi_{N,j, \kappa}|^2|Z^{(N)}\big] \notag \\
&\lesssim \sum_{j>J} \lambda_j^{\kappa-k}  <(\epsilon/3)^2,
\end{align}
using (\ref{weyl}) and $k> \kappa+d/2$ in view of (\ref{ks}) in the last step.

\smallskip

It remains to consider term a) for any $J$ fixed. For any $a=(a_j) \in \R^{J}$, let
\begin{equation} \label{psicw}
\sum_{1 \le j \le J} a_j \langle \theta, e_j \rangle_{L^2} = \Big\langle \theta, \sum_{1 \le j \le J} a_j e_j \Big\rangle_{L^2} \equiv \langle \theta, \psi_{J,a}\rangle_{L^2},~~~\sum_{1 \le j \le J}\hat a_j \hat \Psi_{N,j} \equiv \hat \Psi_{N, J,a},
\end{equation}
in slight abuse of $\hat \Psi$ notation. Then $\psi_{J,a}$ belongs to $ C^\infty(\Omega) \cap L^2_0$ and we can apply Theorem \ref{laplaceapp} combined with (\ref{postbus0}), (\ref{postlim}) to deduce convergence of Laplace transforms
$$E^{\Pi^{\bar \Theta_N}}\big[\exp\big\{t \sqrt N \big(\langle \theta, \psi_{J,a} \rangle_{L^2} - \hat \Psi_{N, J, a} \big)\big\}|Z^{(N)}\big] \to e^{\frac{t^2}{2} \|\mathbb I_{\theta_0}(\bar \psi_{J,a})\|_{L^2(\mathcal X)}^2}$$
for all $t \in \R$, as $N \to \infty$ and in $P_{\theta_0}^\mathbb N$-probability. By Exercise 4.3.3 in \cite{N23} and the Cramer-Wold device (\cite{vdvaart1998}, p.16), this implies that as $N \to \infty$,
\begin{equation}\label{weakj0}
\beta(Law(P_{E_J}(Z_N)), Law(P_{E_J}(Z))) \to^{P_{\theta_0}^\N} 0,
\end{equation}
where $\beta$ is any metric (Ch.~11.3 in \cite{D02}) for weak convergence of laws on $\R^{J} \approx E_{J}$. To upgrade this to convergence in the Wasserstein distance $W_{1, E_J}$ we first show
\begin{equation} \label{uiui}
E\|P_{E_J}(Z_N)\|_{E_{J}}^2 = O_{P_{\theta_0}^\mathbb N}(1),~~J \in \mathbb N \text{ fixed}.
\end{equation}
In view of the inequality $\|h\|_{E_{J}} \le c(J, k) \|h\|_{h^{-k-2}}$ for $h \in E_{J}$ it suffices to bound $$NE^{\Pi^{\bar \Theta_N}}\big[\|\theta - \hat \theta_N\|^2_{H^{-k-2}}|Z^{(N)}\big] \le 2 N\|\hat \theta_N - \theta_0\|^2_{h^{-k-2}} + 2NE^{\Pi^{\bar \Theta_N}}\big[\|\theta - \hat \theta_N\|^2_{H^{-k-2}}|Z^{(N)}\big].$$
The first term is bounded in $P_{\theta_0}^\mathbb N$-probability in view of (\ref{momhat}) and Markov's inequality. The second term can be estimated just as in (\ref{fullbd1}) but extending the series over all indices $j$ (so with $J=0$), and is hence also $O_{P_{\theta_0}^\mathbb N}(1)$, so that (\ref{uiui}) holds. In particular we deduce that for all $R>0$
\begin{equation}\label{rtail}
E\|P_{E_J}(Z_N)\|_{E_{J}}1_{\|P_{E_J}(Z_N)\|_{E_{J}}>R} \le \frac{E\|P_{E_J}(Z_N)\|^2_{E_{J}}}{R} =O_{P_{\theta_0}^\mathbb N}(R^{-1}).
\end{equation}
But now (\ref{weakj0}), (\ref{rtail}) and Theorem 6.9 in \cite{V09} imply that for every $J \in \mathbb N$ and as $N \to \infty$ $$W_{1, E_{J}}(\bar \tau_{N,J}, n_{J}) \to^{P_{\theta_0}^\mathbb N}0.$$ Using also that $W_{1, H^{-k-2}}$ is Lipschitz equivalent to $W_{1,E_{J}}$ on $E_J$ for the projected laws, we conclude that for every $J \in \N$ and $\epsilon>0$ fixed we can find $N_0(\epsilon, J)$ large enough such that for $N \ge N_0$,
\begin{equation}\label{fidilimit}
P_{\theta_0}^\mathbb N\big(W_1(\bar \tau_{N,J}, n_{J}) >\epsilon/3)<\epsilon,
\end{equation}
completing the proof of convergence to zero in probability of $W_1(\bar \tau_N, \mathcal N_{\theta_0})$ from (\ref{wasser3e}). 
\end{proof}

\smallskip

Combining the preceding proposition with Proposition \ref{wassconv} for $T=\hat \theta_N$ implies for unrestricted posterior draws $\theta \sim \Pi(\cdot|Z^{(N)})$ that 
\begin{equation}\label{hatlim}
W_1(Law(\sqrt N (\theta - \hat \theta_N), \mathcal N_{\theta_0}) \to^{P_{\theta_0}^\mathbb N} 0
\end{equation}
as $N \to \infty$, that is, Theorem \ref{mainbvm} holds for the centering $\hat \theta_N$ instead of $\tilde \theta_N=E^\Pi[\theta|Z^{(N)}]$. But convergence in $W_1$ implies convergence of moments by Theorem 6.9 in \cite{V09}, so we obtain $$\sqrt N (E^\Pi[\theta |Z^{(N)}] -\hat \theta_N)  \to^{P_{\theta_0}^\mathbb N} E_{\mathcal N_{\theta_0}}Z =0 \text{ in } H^{-k-2},$$ 
and then also
\begin{equation*}
W_1\big(Law(\sqrt N (\theta - \hat \theta_N)), Law(\sqrt N(\theta- \tilde \theta_N))\big) \le \sqrt N \|E^\Pi[\theta|Z^{(N)}] - \hat \theta_N\|_{H^{-k-2}} = o_{P_{\theta_0}^\mathbb N}(1),
\end{equation*}
which combined with (\ref{hatlim}) completes the proof of the first limit in Theorem \ref{mainbvm}. 

\smallskip

\textbf{Asymptotics for the posterior mean:} To prove the second limit in Theorem \ref{mainbvm} note that by the last display, the limit distribution of $\sqrt N (\tilde \theta_N - \theta_0)$ in $H^{-k-2}$ coincides with the limit distribution of $$Q_N=\sqrt N (\hat \theta_N - \theta_0) = \frac{1}{\sqrt N} \sum_{j\ge 1}\sum_{i=1}^N \langle \mathbb I_{\theta_0} p_N (\bar e_j)(X_i), \varepsilon_i \rangle_W e_j,$$ recalling also (\ref{hatpsi}) and (\ref{hatter}).  Fix $\psi = \psi_{J,a} \in L^2_0 \cap C^\infty$ as in (\ref{psicw}) so that the resulting $\bar \psi$ from (\ref{barpsi}) also lies in $L^2_0 \cap C^\infty$ in view of (\ref{fishinvbd}). By Chebyshev's inequality and Condition \ref{gemol}C), we have as $N \to \infty$,
\begin{align*}
E^\mathbb N_{\theta_0} \Big[\frac{1}{\sqrt N} \sum_{i=1}^N \langle \mathbb I_{\theta_0} p_N (\bar \psi)(X_i) - \mathbb I_{\theta_0} (\bar \psi)(X_i), \varepsilon_i \rangle_W\Big]^2  &\le \|\mathbb I_{\theta_0} p_N (\bar \psi_{\theta_0}) -  \mathbb I_{\theta_0} \bar \psi_{\theta_0}\|^2_{L^2(\mathcal X)}\\
& \lesssim \|p_N (\bar \psi_{\theta_0})-\bar \psi_{\theta_0}\|_{L^2(\Omega)} \to 0.
\end{align*}
Therefore Markov's inequality, Theorem 2.7 in \cite{vdvaart1998} and the central limit theorem imply 
\begin{equation}\label{clt}
\langle Q_N , \psi_{J,a} \rangle_{L^2}  \to^d N\big(0,\|\mathbb I_{\theta_0} \bar \psi_{J,a} \|_{L^2}^2\big)
\end{equation}
for any vector $a \in \R^J$. By the Cramer-Wold device (\cite{vdvaart1998}, p.16) this implies convergence of finite-dimensional distributions 
\begin{equation} \label{cltj}
P_{E_J}(Q_N) \to^d P_{E_J}(Z), Z \sim \mathcal N_{\theta_0},~\text{ in } E_{J}
\end{equation}
for every $J$ fixed and $P_{E_J}$ from (\ref{ej}). Writing $q_{N,J}=Law(P_{E_J}(Q_N)), n_J = Law (P_{E_J}(Z))$, we then decompose  as in (\ref{wasser3e}) but now with $\beta$ the bounded Lipschitz metric for weak convergence (\cite{D02}, Theorem 11.3.3) in $H^{-k-2}$,
\begin{align*}
& \beta(Law (Q_N), \mathcal N_{\theta_0}) \le \beta (q_{N,J}, n_J) + \beta (n_J, \mathcal N_{\theta_0}) + \beta(q_{N,J}, Law (Q_N)) \\
& \le \beta (q_{N,J}, n_J) + (E\|P_{E_J}(Z) - Z\|^2_{H^{-k-2}})^{\frac{1}{2}} + (E\|P_{E_J}(Q_N)-Q_N\|^2_{H^{-k-2}})^{\frac{1}{2}}.
\end{align*}
Given $\epsilon>0$ we can bound the middle term in the last line by $\epsilon/3$ as in (\ref{gausstight}), for $J=J(\epsilon)$ large enough. For the last term we can repeat the arguments from (\ref{momhat}) to obtain the bound $\sum_{j>J}\lambda_j^{-k}<\epsilon/3$, again for $J$ large enough. Finally the first term converges to zero as $N \to \infty$ by (\ref{cltj}) and hence can be made less than $\epsilon/3$ as well for $N$ large enough. Conclude that as $N \to \infty$, $$\sqrt N (\hat \theta_N - \theta_0) \to^d Z \sim \mathcal N_{\theta_0}~~\text{ in } H^{-k-2},$$ as desired. The proof of Theorem \ref{mainbvm} is complete.

\subsection{Application to the reaction diffusion model and proof of Theorem \ref{ganzwien}}

We now apply Theorem \ref{mainbvm} to the reaction-diffusion system (\ref{evol}).

\begin{theorem}\label{bvminv}
Assume $\theta_0 \in C^\infty(\Omega) \cap L^2_0(\Omega)$. The forward map $\G(\theta) \equiv u_\theta$ from (\ref{fwdmap}) provided by the solution of the reaction diffusion equation (\ref{evol}) satisfies Condition \ref{gemol} for  $$\zeta>d/2, \bar \gamma>3+d/2, \eta_0>2+d/2, d \le 3, W=\R.$$ As a consequence, for prior $\theta \sim \Pi$ as in Condition \ref{priorgen} with integer $\gamma>2+3d$, posterior law of $\theta|Z^{(N)}$ arising as in (\ref{postlog}), posterior mean $\tilde \theta_N = E^\Pi[\theta|Z^{(N)}]$, and if $\mathcal N_{\theta_0}$ is the Gaussian measure from Proposition \ref{limmeas}, then we have
$$W_1(Law(\sqrt N(\theta-\tilde \theta_N)|Z^{(N)}), \mathcal N_{\theta_0}) \to^{P_{\theta_0}^\mathbb N} 0$$
where  $W_1=W_{1, H^{-k-2}}$ is the Wasserstein distance (\ref{wassdef}) on $H^{-k-2}(\Omega)$ for any $k>3d$.
\end{theorem}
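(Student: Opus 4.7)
The plan is to verify each clause A--F of Condition \ref{gemol} for the solution operator $\G(\theta) = u_\theta$ of (\ref{evol}) with $W = \R$, $\zeta > d/2$, $\bar\gamma > 3 + d/2$ and $\eta_0 > 2 + d/2$, and then invoke Theorem \ref{mainbvm} directly. The regularity clauses A--D should follow from classical parabolic theory: the compact support of $f$ gives uniform $L^\infty$-bounds on $u_\theta$ via the maximum principle (clause A); Gr\"onwall applied to the difference $u_\theta - u_\vartheta$, which satisfies a linear equation with bounded zeroth-order coefficient $\int_0^1 f'(s u_\theta + (1-s) u_\vartheta)\,ds$, yields the $L^2$-Lipschitz bound (clause B); the Fr\'echet derivative of $\G$ at $\theta_0$ is the solution map of the linear Schr\"odinger-type equation
\begin{equation}\label{planlinps}
\partial_t V_h - \Delta V_h - f'(u_{\theta_0}) V_h = 0, \qquad V_h(0) = h,
\end{equation}
with quadratic Taylor remainder again controlled by Gr\"onwall (clause C); and the $L^\infty$-mapping property (clause D) follows from the Sobolev embedding $H^\zeta \hookrightarrow L^\infty$ for $\zeta > d/2$ combined with parabolic $L^\infty$ estimates, which is where $d \le 3$ enters comfortably.

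The genuinely new analytic content sits in clauses E and F, and this is where I expect the main obstacle to be. For the stability estimate E, I would linearise and reduce the lower bound to the analogous bound for the map $h \mapsto V_h$ from (\ref{planlinps}); the nonlinear remainder is quadratic and can be absorbed on a small $L^2$-ball. The key structural feature, already flagged in Remark \ref{template}, is that the spatial Schr\"odinger operator $\Delta + f'(u_{\theta_0}(t,\cdot))$ is self-adjoint on $L^2(\Omega)$ for each fixed $t$. Since the observations cover the full cylinder $[0,T]\times\Omega$, one has access to the time-average of $V_h$ near $t=0$, and this time-averaging near the initial time is precisely what should deliver Lipschitz (rather than merely logarithmic) stability, in contrast to the Navier--Stokes setting of \cite{NT23}. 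Pairing $V_h$ with the backward-evolved adjoint of a suitable test function and exploiting self-adjointness should then produce a lower bound of the form $\|V_h\|_{L^2([0,T],L^2)} \gtrsim \|h\|_{H^{-1}}$.

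For clause F, the information operator $\mathbb I_{\theta_0}^* \mathbb I_{\theta_0}$ is morally a pseudodifferential operator of order $-2$: in the pure-heat toy case $f\equiv 0$ one has $\mathbb I_{\theta_0}^* \mathbb I_{\theta_0} = \tfrac{1}{2}\Delta^{-1}(e^{2T\Delta}-I)$, which is smoothing of order two. Hence $\mathcal I = \Delta \mathbb I_{\theta_0}^* \mathbb I_{\theta_0}$ should be a bounded zero-order perturbation of the identity. I would prove it is a homeomorphism on $H^\eta_0$ by a Fredholm-type argument: boundedness comes from semigroup and commutator estimates for the perturbed parabolic evolution, injectivity is a consequence of clause E, and surjectivity follows by comparison with the corresponding symmetric time-independent model operator.

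Once Condition \ref{gemol} is established with these parameters, the statement is immediate from Theorem \ref{mainbvm}: its numerical hypotheses $\gamma > \max(3\zeta + 3d/2 + 2, \eta_0)$ and $k > \max(\zeta + 5d/2, \eta_0)$ reduce, for $\zeta$ and $\eta_0$ slightly above $d/2$ and $2 + d/2$ respectively and for $d \le 3$, to $\gamma > 3d + 2$ and $k > 3d$, matching the assumptions $\gamma > 2 + 3d$ and $k > 3d$ made in Theorem \ref{bvminv}.
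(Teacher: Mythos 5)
Your overall plan matches the paper: verify Condition \ref{gemol} for the reaction--diffusion forward map and then quote Theorem \ref{mainbvm}, and the arithmetic you do at the end for the thresholds on $\gamma$ and $k$ is consistent with what the paper needs. The sketches for clauses A--D and for clause F are also in the right spirit; in particular, your toy computation $\mathbb I_{\theta_0}^*\mathbb I_{\theta_0}=\tfrac12\Delta^{-1}(e^{2T\Delta}-I)$ and the resulting Fredholm ``identity plus compact'' picture for $\mathcal I$ is exactly the structure the paper establishes, via a comparison to a locally constant potential and an explicit spectral representation of the frozen Schr\"odinger semigroup.

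The genuine gap is in your treatment of clause E. You propose to linearise at $\theta_0$, reduce to the linear stability bound for $h\mapsto V_h$ with frozen potential $f'(u_{\theta_0})$, and then ``absorb the quadratic remainder on a small $L^2$-ball.'' This cannot work as stated: Condition \ref{gemol}E is a \emph{global} lower bound required to hold for every $\theta\in U(\bar\gamma,B)$, whereas the only a priori smallness of $\|\theta-\theta_0\|_{L^2}$ available in clause C is restricted to a small $L^2$-ball $\|h\|_{L^2}\le c'$ and does not transfer to E. On $U(\bar\gamma,B)$ the quadratic remainder is of the same order as the linear term, so there is no absorption. The paper circumvents this entirely by \emph{not} linearising: it writes $w=u_\theta-u_{\theta_0}$ as the exact solution of a linear Schr\"odinger-type equation with the mean-value potential $V=f'(\tilde u)$, where $\tilde u$ is a pointwise convex combination of $u_\theta$ and $u_{\theta_0}$ (equation (\ref{mvt})), so there is no Taylor remainder at all. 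The lower bound is then proved for this variable-coefficient linear flow (Lemma \ref{paristexas}) with a constant depending only on $\|V\|_{C^1([0,T],C^1)}\le M(B)$, which is controlled uniformly on $U(\bar\gamma,B)$ once $\bar\gamma>3+d/2$. Relatedly, your device of ``pairing $V_h$ with the backward-evolved adjoint of a suitable test function'' is too vague to yield the required $\|V_h\|_{L^2([0,T],L^2)}\gtrsim\|h\|_{H^{-1}}$; the paper instead freezes the potential on a short initial interval $(0,\varepsilon/2)$, compares to the time-independent Schr\"odinger semigroup via Proposition \ref{cstpert}, and uses the explicit spectral identity $\int_0^{\varepsilon/2}e^{-2s\lambda_{j,W}}\,ds\gtrsim(1+\lambda_{j,W})^{-1}$ uniformly in $j$, which is precisely what converts the time-average near $t=0$ into an $H^{-1}$ lower bound. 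You should replace your linearisation-plus-absorption argument for E by the mean-value (pseudo-linearisation) reduction and supply the frozen-potential spectral argument for the linear flow.
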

\begin{proof}
The proof is based on verifying Condition \ref{gemol} of Theorem \ref{mainbvm} and PDE theory developed in Sec.~\ref{pdean}. Here are the pointers: Conditions A), B) are the subject of Corollary \ref{yetagain}, while Condition C) is verified in Theorem \ref{linrob}. The inequality (\ref{zetabus}) in Condition D) follows from (\ref{h2estimate}) and the continuity of $\mathbb I_{\theta_0}$ required in D) follows from (\ref{izeta}). The stability estimate E) follows from (\ref{dochparis}), while  F) is the content of Theorem \ref{raf}.
\end{proof}

While $k>3d$ is not necessarily optimal, Theorem \ref{bvminv} cannot hold in topologies significantly stronger than that of $H^{-k-2}$ (such as, e.g., $L^2$) as convergence in law would require the limiting random variable to be tight in this function space (Thm.~11.5.4 in \cite{D02}), which $\mathcal N_{\theta_0}$ is not in view of the remarks after Proposition \ref{limmeas}. 

\smallskip

Theorem \ref{ganzwien} can be deduced from Theorem \ref{bvminv} by the following argument, which relies on linearisation and Lemma \ref{bootcamp} on the smoothing properties of solutions to the linear PDE (\ref{PDElin}). Let us define (conditional on the observations) random variables $$Y_N = \sqrt N (u_\theta - u_{\tilde \theta_N})|Z^{(N)}, ~~Z_N = \sqrt N (\theta-\tilde \theta_N)|Z^{(N)},~~ \theta \sim \Pi(\cdot|Z^{(N)}).$$ Prior and posterior draws lie in $H^{\bar \gamma}$ almost surely for some $\bar \gamma>d/2$. Similarly the Bochner integrals $\tilde \theta_N$ lie in $H^{\bar \gamma}$  on events of $P_{\theta_0}^\mathbb N$-probability approaching one by the arguments in and after (\ref{qreg}). Thus an application of Proposition \ref{roundrobin} implies that we can consider the random variables $Y_n$ as taking values in the Banach space $\mathscr C$ from (\ref{calc}). Next, Theorem \ref{linrob} provides the linearisation of $\theta \mapsto u_\theta(t)$ around $\theta_0 \in C^\infty \cap L^2_0$ as $$Du_{\theta_0}[\cdot]: L^2(\Omega) \to L^2([t_{\min}, t_{\max}], L^2(\Omega))$$ Since the time-dependent potential  $V(t, \cdot)= f'(u_{\theta_0}(t))$ lies in $C^{1,\infty}$ from (\ref{notsosmooth}) in view of Proposition \ref{roundrobin}, we can invoke Lemma \ref{bootcamp} to deduce that the linear operator $Du_{\theta_0}$ in fact extends to a continuous linear operator from $H^{-k-2}$ to $L^\infty([t_{\min}, t_{\max}], H^\eta)$ for $\eta>d/2$. We can replace $L^\infty$ by $C$ since time-continuity follows by considering solutions to (\ref{weakpdelin}) with initial conditions $h=Du_{\theta_0}(t_{\min}) \in H^\eta$, and using Proposition \ref{roundrobinlin}. By the Sobolev imbedding the map and hence the random variables $Du_{\theta_0}(Z_N)$ take values in $\mathscr C = C([t_{\min},t_{\max}], C(\Omega))$. Similarly, for $Z$ a random variable in $H^{-k-2}$ of law $\mathcal N_{\theta_0}$, the Gaussian Borel random variable $Y=Du_{\theta_0}(Z)$ takes values in the space $\mathscr C$. By the remarks after (\ref{weakpdelin}) its law coincides with the law of the unique solution of the stochastic PDE (\ref{linshow}).

\smallskip

To prove Theorem \ref{ganzwien}, recall from (\ref{wassdef}) that the Wasserstein distance $\mathscr W_1(\mu, \nu)$  involves suprema over $1$-Lipschitz functions $F: \mathscr C \to \R$. Then we have
\begin{align*}
&\mathscr W_1(Law(Y_N), Law (Y)) = \sup_{\|F\|_{Lip}\le 1} |EF(\sqrt N (u_\theta - u_{\tilde \theta_N})) -  EF(Y)| \\
&= \sup_{\|F\|_{Lip}\le 1} \Big|E\big[F\big(\sqrt N (u_\theta - u_{\tilde \theta_N}) - Du_{\theta_0}(Z_N) + Du_{\theta_0}(Z_N)\big) - F(Du_{\theta_0}(Z_N))\big]\\
&~~~~~~~~~~~~~~  +EF(Du_{\theta_0}(Z_N))-  EF(Y)\Big| \\
&\le \sqrt N E\big\|u_\theta - u_{\tilde \theta_N} - Du_{\theta_0}[\theta - \tilde \theta_N]\big\|_\mathscr C + \sup_{\|F\|_{Lip}\le 1}|EF(Du_{\theta_0}(Z_N))-  EF(Du_{\theta_0}[Z])| \\
& = I+II.
\end{align*}
From what precedes the linear map $Du_{\theta_0}$ is continuous from $H^{-k-2}$ to $\mathscr C$ and hence Lipschitz, and therefore the composite map $G \equiv F \circ Du_{\theta_0}$ is Lipschitz from $H^{-k-2}(\Omega) \to \R$. We therefore have from (\ref{wassdef}) that $$II \lesssim \sup_{\|G\|_{Lip}\le 1}|EG(Z_N)-  EG(Z)| = W_{1, H^{-k-2}}(Law(\sqrt N(\theta-\tilde \theta_N)|Z^{(N)}), \mathcal N_{\theta_0})$$ which converges to zero in $P_{\theta_0}^\mathbb N$-probability as $N \to \infty$ by Theorem \ref{bvminv}.

\smallskip

To show that the first term converges to zero we use the following argument: Introduce the event $A=\{\theta: \|\theta\|_{H^{\bar \gamma}} \le M\}$ for $M$ large enough to be chosen. We show first that
$$\sqrt N E\big[ \sup_{t \in [t_{min}, t_{max}] } \|u_\theta(t, \cdot) - u_{\tilde \theta_N}(t, \cdot) - Du_{\theta_0}[\theta - \tilde \theta_N]\|_{\infty}1_{A^c}\big] $$ converges to zero in $P_{\theta_0}^\mathbb N$-probability, where we write $E, \Pr$ for posterior expectation and probability. We know form Theorem \ref{postcont} that $\Pr(A^c) \le e^{-bN\delta_N^2}$ for $b>0$ and $M$ large enough but fixed, with $P_{\theta_0}^\N$-probability approaching one. Also, in the preceding display, only $\theta$ is random under the posterior and (\ref{meanreg}) implies that $\|\tilde \theta_N\|_{H^{\bar \gamma}}$ is uniformly bounded with frequentist probability approaching one. Thus, using also $H^{a}$-Lipschitz continuity of $Du_{\theta_0}$ (Proposition \ref{roundrobinlin}, with $\theta_0$ fixed) and the Sobolev imbedding ($d\le 3$), it suffices to bound
$$ E[\sup_{t}(\|u_\theta(t)\|_\infty + \|Du_{\theta_0}[\theta]\|_{\infty})1_{A^c}] \lesssim \big[(E \sup_t\|u_\theta(t)\|_{H^2}^2)^{1/2} + (E\|\theta\|_{H^2}^2)^{1/2} \big] \Pr (A^c)^{1/2}.$$
The last quantity is $O_{P^\mathbb N_{\theta_0}}(e^{-bN\delta_N^2/2})=o_{P^\mathbb N_{\theta_0}}(1/\sqrt N)$  by Theorem \ref{postcont} if we can show
$$\sup_{0<t<T}\|u_\theta(t)\|_{H^2}^2 \lesssim 1 + \|\theta\|_{H^2}^2$$ for all $\theta \in H^2$, with constants implicit in $\lesssim$ independent of $\theta$. To see this, note that from (\ref{doo}) below with $\gamma = 2$  and (\ref{dooag}), we have
$$\frac{d}{dt} \|u_\theta\|_{h^2}^2 \lesssim 1+\|f(u_\theta)\|^2_{h^1}  \lesssim 1+ \|u_\theta\|^2_{h^1}.$$
 This inequality can be integrated $\int_0^t$, and using also (\ref{another}) we conclude that indeed
 $$\|u_\theta\|_{h^2}^2 \lesssim 1 + \|\theta\|^2_{h^2} + \int_0^t \|\nabla u_\theta\|^2_{L^2}ds \lesssim 1 + \|\theta\|_{H^2}^2.$$ 

It remains to deal with the term involving  $1_A$, where we note that we can in view of Theorem \ref{postcont} further restrict to (frequentist) events $\{\|\tilde \theta_N\|_{H^{\bar \gamma}}\le M\}$. By the Sobolev imbedding, the interpolation inequality (\ref{interpol}) with exponent $m = (\bar \gamma - \zeta)/\bar \gamma$ for any $d/2<\zeta<\bar \gamma<\gamma-d/2$, and Propositions \ref{roundrobin} and \ref{roundrobinlin}, we obtain the uniform in $t$ bound
\begin{align*}
& \|u_\theta(t,\cdot) - u_{\tilde \theta_N}(t,\cdot) - Du_{\theta_0}(t,\cdot)[\theta - \tilde \theta_N]\|_{\infty}1_A \\
&\lesssim \|u_\theta(t,\cdot) - u_{\tilde \theta_N}(t,\cdot) - Du_{\theta_0}(t,\cdot)[\theta - \tilde \theta_N]\|_{H^\zeta}1_A\\
&\lesssim \|u_\theta(t,\cdot) - u_{\tilde \theta_N}(t,\cdot) - Du_{\theta_0}(t,\cdot)[\theta - \tilde \theta_N]\|_{L^2}^m \|u_\theta(t,\cdot) - u_{\tilde \theta_N}(t,\cdot) - Du_{\theta_0}(t,\cdot)[\theta - \tilde \theta_N]\|_{H^{\bar \gamma}}^{1-m}1_A \\
& \lesssim \big(\|\theta-\theta_0\|^2_{L^2} + \|\tilde \theta_N - \theta_0\|_{L^2}^{2}\big)^m
\end{align*}
where the first factor was estimated from above, using also Theorem \ref{linrob}, by
$$\|Du_{\theta_0}[\theta - \theta_0] - Du_{\theta_0}[\tilde \theta_N - \theta_0\big] - Du_{\theta_0}[\theta - \tilde \theta_N]\|_{L^2} + C\|\theta-\theta_0\|^2_{L^2} + C\|\tilde \theta_N - \theta_0\|_{L^2}^{2}.$$  We can use (\ref{meanreg}) to bound the (under the posterior, deterministic) variables $$\|\tilde \theta_N - \theta_0\|_{L^2}^{2} =O_{P^\mathbb N_{\theta_0}}(\tilde \delta^2_N)$$ and then moreover $$ \sqrt N E^\Pi \big[\big( \|\theta-\theta_0\|_{L^2}^{2m} + \tilde \delta^{2m}_N\big)|Z^{(N)}\big] =O_{P_{\theta_0}^\mathbb N}( \tilde \delta_N^{2m})=o_{P_{\theta_0}^\mathbb N} (1/\sqrt N),$$ since under the maintained hypothesis $\gamma>2+3d$ we can  choose $\bar \gamma, \zeta$ so that $$ \tilde \delta_N^{4m} = N^{-4 \frac{\gamma}{2\gamma+d}\frac{\bar \gamma-\zeta}{\bar \gamma +1}}= o(N^{-1}).$$ This completes the proof of convergence to zero of $I$ and thus of the theorem.

\section{PDE results}\label{pdean}

In this section we derive various analytical results needed for the proof of Theorem \ref{ganzwien}. We refer to \cite{R01, E10} for relevant background material on the theory of parabolic PDEs that we will rely upon.

\subsection{Periodic non-linear reaction diffusion equations}

We first develop appropriate theory for the non-linear equation (\ref{evol}) underlying the dynamical system featuring in Theorem \ref{ganzwien}. For $\Omega =[0,1]^d$ and any $T>0$ we consider periodic solutions $u$ to the PDE
\begin{align}\label{PDE}
\frac{\partial}{\partial t} u - \Delta u &= f(u)~~\text{on } (0,T] \times \Omega, \notag \\
u(0, \cdot) &= \theta~~\text{on } \Omega,
\end{align}
where $f: \R \to \R$ is a smooth function of compact support, $f \in C^\infty_c(\R)$, where $\Delta = \sum_{j=1}^d \partial^2/(\partial x_j)^2$ is the Laplacian, and $\theta =u(0,\cdot) \in H^1(\Omega)$ is an initial condition. We note that $f$ acts non-linearly via point-wise composition, $f(u)(x) = f(u(x)), x \in \Omega$. 

\smallskip

For $(H^1(\Omega))^*=H^{-1}(\Omega)$, a weak solution to (\ref{PDE}) is a map $u \in L^2([0,T], H^1(\Omega)) \cap C([0,T], L^2(\Omega))$ such that $(\partial/\partial t)u = u' \in L^2([0,T], H^{-1}(\Omega))$ satisfies
\begin{align} \label{weakpde}
 \langle u', v \rangle_{L^2} + \langle \nabla u, \nabla v \rangle_{L^2} &= \langle f(u), v \rangle_{L^2},~~\forall v \in H^1 \text{ and a.e. } t \in (0,T] \notag \\
u(0) &= \theta,
\end{align}
noting that $f(u)$ is uniformly bounded. We call $u$ solving (\ref{weakpde}) a \textit{strong} solution if in addition $$u \in L^2([0,T], H^2(\Omega)) \cap C([0,T], H^1(\Omega)),~~u' \in L^2([0,T], L^2(\Omega)),$$ in which case $u$ then solves (\ref{PDE}) as an equation in $L^2([0,T], L^2(\Omega))$.

\subsubsection{Existence and uniqueness of strong solutions}

The following proof of existence adapts arguments from p.537f in \cite{E10} to the present setting. 

\begin{theorem}\label{exist}
For $f \in C_c^\infty(\R)$ and any $\theta \in H^1(\Omega), d \in \mathbb N$, there exists a strong solution $u= u_\theta$ to the reaction-diffusion equation (\ref{weakpde}) that is unique in $C([0,T], L^2(\Omega))$.
\end{theorem}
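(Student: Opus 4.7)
The strategy is a standard Galerkin construction exploiting the fact that $f\in C^\infty_c(\R)$ is globally bounded and Lipschitz, so the nonlinearity $v\mapsto f(v)$ maps $L^2(\Omega)\to L^\infty(\Omega)\subset L^2(\Omega)$ with $\|f(v)\|_\infty\le\|f\|_\infty$ and $\|f(v)-f(w)\|_{L^2}\le\|f'\|_\infty\|v-w\|_{L^2}$. First I would fix the Laplacian eigenbasis $(e_j)$ from \eqref{weyl} and look for approximate solutions $u_m(t)=\sum_{j=0}^{m}c_j^m(t)e_j$ satisfying, for $0\le j\le m$,
\begin{equation*}
\langle u_m',e_j\rangle_{L^2}+\langle\nabla u_m,\nabla e_j\rangle_{L^2}=\langle f(u_m),e_j\rangle_{L^2},\quad u_m(0)=P_{E_m}\theta.
\end{equation*}
Since $f$ is smooth and bounded, this is a locally Lipschitz ODE system for $(c_j^m)$ whose right-hand side has linear growth, so Picard--Lindel\"of and an \emph{a priori} $L^2$-bound (testing with $u_m$ gives $\tfrac12\tfrac{d}{dt}\|u_m\|_{L^2}^2+\|\nabla u_m\|_{L^2}^2\le\|f\|_\infty\|u_m\|_{L^2}$) give a unique global-in-time solution on $[0,T]$.

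Next I would derive the energy estimates needed for passage to the limit. Testing with $-\Delta u_m$ (equivalently differentiating $\|\nabla u_m\|_{L^2}^2$) yields
\begin{equation*}
\tfrac12\tfrac{d}{dt}\|\nabla u_m\|_{L^2}^2+\|\Delta u_m\|_{L^2}^2=-\langle f(u_m),\Delta u_m\rangle_{L^2}\le\tfrac12\|\Delta u_m\|_{L^2}^2+\tfrac12|\Omega|\|f\|_\infty^2,
\end{equation*}
so Gr\"onwall produces a bound uniform in $m$ on $\|u_m\|_{L^\infty([0,T],H^1)}+\|u_m\|_{L^2([0,T],H^2)}$ in terms of $\|\theta\|_{H^1}$, $\|f\|_\infty$ and $T$. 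The equation then yields $\|u_m'\|_{L^2([0,T],L^2)}\lesssim\|u_m\|_{L^2([0,T],H^2)}+\|f\|_\infty$, again uniformly in $m$.

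With these bounds, I extract a subsequence converging weakly-$\ast$ in $L^\infty([0,T],H^1)$, weakly in $L^2([0,T],H^2)$, with derivatives weakly in $L^2([0,T],L^2)$, and by the Aubin--Lions compactness lemma strongly in $L^2([0,T],L^2(\Omega))$ (and almost everywhere on $(0,T]\times\Omega$ along a further subsequence). Since $|f(u_m)|\le\|f\|_\infty$ is bounded and $f$ is continuous, dominated convergence gives $f(u_m)\to f(u)$ in $L^2([0,T],L^2)$. Passing to the limit in the Galerkin identity against each $e_j$ and invoking density of $\mathrm{span}\{e_j\}$ in $H^1$ shows that $u$ solves \eqref{weakpde} in the strong sense; the regularity $u\in C([0,T],H^1)$ follows from $u\in L^2([0,T],H^2)$ with $u'\in L^2([0,T],L^2)$ by the standard embedding (Evans \cite{E10}, Ch.~5.9), and $u(0)=\theta$ is recovered from the Galerkin initial data via $P_{E_m}\theta\to\theta$ in $H^1$.

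For uniqueness in $C([0,T],L^2)$, suppose $u_1,u_2$ are two weak solutions with the same initial condition. Set $w=u_1-u_2$; then $w\in L^2([0,T],H^1)$ with $w'\in L^2([0,T],H^{-1})$ and $w$ solves
\begin{equation*}
\langle w',v\rangle_{L^2}+\langle\nabla w,\nabla v\rangle_{L^2}=\langle f(u_1)-f(u_2),v\rangle_{L^2}\qquad\forall v\in H^1.
\end{equation*}
Taking $v=w$ (legitimate by the Lions chain rule), using the Lipschitz bound $\|f(u_1)-f(u_2)\|_{L^2}\le\|f'\|_\infty\|w\|_{L^2}$ and discarding the nonnegative $\|\nabla w\|_{L^2}^2$ term gives $\tfrac{d}{dt}\|w(t)\|_{L^2}^2\le 2\|f'\|_\infty\|w(t)\|_{L^2}^2$, so Gr\"onwall with $w(0)=0$ forces $w\equiv 0$.

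The only place that requires care is the $H^2$-estimate: testing with $\Delta u_m$ is only legitimate at the Galerkin level because each $e_j$ is an eigenfunction, so $\Delta u_m\in E_m$. Once this is handled the rest is routine, and the compactness of $f$ ensures there are no growth obstructions as $T\to\infty$ either, so the construction is in fact global.
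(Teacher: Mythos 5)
Your proof is correct, but it takes a genuinely different route from the paper. You use the classical Galerkin scheme (projection onto the eigenspaces $E_m$, a priori $H^1$- and $L^2 H^2$-energy bounds obtained by testing against $u_m$ and $-\Delta u_m$, Aubin--Lions compactness, and dominated convergence to pass the bounded nonlinearity to the limit), followed by a separate Gr\"onwall uniqueness argument. The paper instead runs a Banach fixed-point argument directly on $X=C([0,T],L^2(\Omega))$: for $u\in X$ it sets $h=f(u)$, lets $w=A(u)$ be the solution of the linear heat equation with forcing $h$ and initial data $\theta$ (via the variation-of-constants formula), shows $A$ is a contraction on $X$ for $T_0$ small depending only on $\|f\|_{Lip}$, and iterates in time. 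That route buys existence and uniqueness simultaneously from a single contraction estimate, avoids Aubin--Lions compactness entirely, and automatically locates the solution as a fixed point of a regularizing linear operator so the strong regularity is inherited from linear parabolic theory; but it hinges on $f$ being globally Lipschitz so that the contraction constant is uniform. Your Galerkin/compactness route is more robust if one wanted to weaken the global Lipschitz hypothesis (e.g.\ to locally Lipschitz $f$ with sign or growth conditions), at the cost of heavier machinery and a separate uniqueness step. One small remark worth making explicit in your write-up: the uniqueness you prove is within the class of weak solutions (you need $w'\in L^2([0,T],H^{-1})$ to invoke the Lions chain rule), whereas the paper's fixed-point argument gives uniqueness of fixed points of $A$ in all of $C([0,T],L^2)$, which is a priori a slightly larger class; for the present regularity the two coincide, but the distinction is worth flagging. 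Otherwise the estimates, the remark that $-\Delta u_m\in E_m$ makes the $H^2$ test admissible, and the limit passage are all in order.
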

\begin{proof}
Define the Banach space $X \equiv C([0,T], L^2(\Omega))$ with norm $$\|u\|_X = \sup_{0<t<T}\|u(t)\|_{L^2(\Omega)}.$$ For any $u \in X$ define a new function $h(t, \cdot)=f(u(t, \cdot)) \in L^\infty([0,T], L^\infty)$. The \textit{linear} parabolic PDE
\begin{align}\label{linpolw}
\frac{\partial}{\partial t} w - \Delta w &= h~~\text{on } (0,T] \times \Omega \notag \\
w(0,\cdot)&= \theta ~~\text{on } \Omega
\end{align}
has a unique strong solution $w \in L^2([0,T], H^2) \cap C([0,T], H^1(\Omega))$ such that $(\partial/\partial t)w \in L^2([0,T], L^2)$, see, e.g., Theorem 7.7 in \cite{R01}. Such $w$ can be represented via the usual variation of constants formula
\begin{equation}\label{varconst}
w(t,\cdot) = \sum_{j=0}^\infty e^{-\lambda_j t} \langle e_j, \theta \rangle_{L^2} e_j +  \sum_{j=0}^\infty \int_0^t e^{-(t-s)\lambda_j} \langle e_j, h(s) \rangle_{L^2} e_j ds,~t>0,
\end{equation}
where $e_j$ are the orthonormal eigenfunctions of the (periodic) Laplacian from (\ref{eftrig}), and with the above sums converging in $L^2$ under the present hypotheses on $h, \theta$. For $u_i \in X, h_i=f(u_i), i=1,2,$ consider solutions $w_i \equiv A(u_i) \in X$ to (\ref{linpolw}) and define the constant $$i_{1,2} =  \int_\Omega (w_1 - w_2) = \langle w_1-w_2, e_0 \rangle_{L^2}.$$ Since $\langle e_j, 1 \rangle_{L^2} =0$ for $j \neq 0$, we have from Fubini's theorem and the Cauchy-Schwarz inequality
\begin{align*}
i_{1,2} &= \left| \int_\Omega \theta - \int_\Omega \theta  +   \int_0^t \int_\Omega (f(u_1(s,x)) - f(u_2(s, x))dx ds \right|  \\
& \le T \sup_{0<t<T} \|f(u_1) - f(u_2)\|_{L^2} \le T \|f\|_{Lip} \|u_1-u_2\|_X.
\end{align*}
Interchanging integration and differentiation (justified by standard arguments for the strong solutions $w_i$ to (\ref{linpolw}))  and using the inequalities of Cauchy-Schwarz, Young and Poincar\'e (cf.~p.151 in \cite{R01}), we have for any $\epsilon>0$ that
\begin{align*}
\frac{d}{dt} \|w_1-w_2\|_{L^2}^2 &+ 2 \|\nabla (w_1-w_2 - i_{1,2})\|^2_{L^2} \\
& = 2 \langle w_1- w_2, \frac{\partial}{\partial t} (w_1-w_2) -\Delta (w_1-w_2) \rangle_{L^2} \\
&= 2 \langle w_1-w_2, h_1 - h_2 \rangle_{L^2} \\
&\le \frac{\epsilon}{2} \|w_1-w_2 - i_{1,2} + i_{1,2}\|_{L^2}^2 + \frac{1}{2\epsilon}\|f(u_1)-f(u_2)\|^2_{L^2} \\
& \le  \epsilon \|\nabla(w_1-w_2-i_{1,2})\|_{L^2}^2 + (T^2 + (2\epsilon)^{-1}) \|f\|_{Lip}^2 \|u_1-u_2\|^2_{X}.
\end{align*}
Then choosing $\epsilon$ small enough and subtracting, we deduce
$$\frac{d}{dt} \|w_1 -w_2\|_{L^2}^2 \le c (T^2+1) \|u_1-u_2\|^2_X$$ for some $c=c( \|f\|_{Lip})>0$. We can integrate $\int_0^t (\cdot)ds$ this inequality and use $w_1(0)-w_2(0)=\theta - \theta =0$ to obtain
\begin{equation}\label{conte}
\|A(u_1)-A(u_2)\|^2_X = \sup_{0<t<T}\|w_1(t)-w_2(t)\|^2_{L^2} \le c(T^2+1) T \|u_1-u_2\|^2_X.
\end{equation}
We deduce that for $T=T_0$ small enough depending only on $\|f\|_{Lip}$, the map $A$ is a contraction on $X$ and Banach's fixed point theorem (p.536, \cite{E10}) implies the existence of a unique $u \in X$ satisfying $A(u)=u$ on the interval $[0,T_0]$. Moreover, since $u=A(u)$ is in the range of the linear parabolic solution operator from above, it has the desired regularity for a strong solution. In particular $u(T_0) \in H^1$ and we can iterate the preceding argument finitely many times with initial conditions $u(T_0), u(T_1), \dots,$ etc., to extend the solutions to $[0,T]$. 
\end{proof}

\subsubsection{Regularity estimates}

We will need that regularity of initial conditions $\theta \in H^\gamma$ translates into regularity of solutions $u_\theta \in L^\infty([0,T], H^\gamma)$ for any $\gamma$ and in a quantitative way. We will employ `energy' methods via Sobolev norms (\ref{seqnorm}) with corresponding inner product $\langle \cdot, \cdot \rangle_{h^\gamma}$. Note that we have for any $u \in H^\gamma$ and $d$-dimensional periodic vector field $w \in H^\gamma$ the `divergence theorem'
\begin{align} \label{ipart0}
\langle w, \nabla u\rangle_{h^\gamma} &=  \sum_{l=1}^d\sum_{j \ge 0} (1+\lambda_j)^\gamma \langle w_l, e_j \rangle_{L^2}  \overline{\langle \partial_l u, e_j \rangle_{L^2}} \notag \\
& = -\sum_{l=1}^d\sum_{j \ge 0} (1+\lambda_j)^\gamma \langle  w_l,  e_j \rangle_{L^2}  \overline{\langle u, \partial_l e_j \rangle_{L^2}} \notag \\
&=\sum_{l=1}^d\sum_{j \ge 0} (1+\lambda_j)^\gamma\langle  w_l,  c_{j,l}e_j \rangle_{L^2}  \overline{\langle u, e_j \rangle_{L^2}} = -\langle \nabla \cdot w, u \rangle_{h^\gamma},
\end{align}
using integration by parts for the $l$-th variable and $\partial_l e_j = c_{j,l} e_j(x)$ for $c_{j,l} = 2\pi i j_l$ with $\overline{c_{j,l}} = -c_{j,l}$ and $e_j$ from (\ref{eftrig}). In particular if $w = \nabla u$ we have
\begin{equation}\label{ipart}
\|\nabla u \|_{h^{\gamma}}^2 = -\langle \Delta u, u \rangle_{h^\gamma}.
\end{equation}
These identities hold for all $\gamma \in \R$ as long as the action of $u,w$ on $e_j$ is appropriately defined. 

The proof of the following proposition works for any $d$ as long as $\gamma \le 2$. For $\gamma>2$ in general dimensions $d>3$, energy methods would need to be replaced by Schauder estimates as in Sec.~5.2 in \cite{M89}, but we abstain from these technicalities to focus on the main ideas.

\begin{proposition}\label{roundrobin}
For $\theta \in H^1(\Omega)$ and $f \in C^\infty_c(\R)$, let $u$ be a solution of (\ref{weakpde}). Suppose $\|\theta\|_{H^\gamma} \le B$ for $\gamma \ge 0$ and if $\gamma>2$ also assume $d\le 3$. Then we have
\begin{equation}\label{guaten}
\int_0^T \|u_\theta(t)\|_{H^{\gamma+1}}^2 dt +\sup_{0 \le t \le T}\|u_\theta(t)\|_{H^\gamma} \le C,
\end{equation}
for some $C=C(B,f, \gamma, d)$, and, writing $u' = (\partial u/\partial t)$, we also have
\begin{equation}\label{tbound}
\int_0^T\big\|u'_\theta(t)\big\|_{H^{\gamma-1}}^2 dt + {\rm{ess}}\sup_{0\le t \le T}\|u'_\theta(t)\|_{H^{\gamma-2}}  \le C'.
\end{equation}
In particular $u_\theta \in C([0,T], H^\gamma)$ and if $\theta \in C^\infty(\Omega)$, then $u_\theta$ lies in $\cap_{b>0}C^1([0,T], C^b(\Omega))$.
\end{proposition}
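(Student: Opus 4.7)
My plan is the classical parabolic energy/bootstrap argument organized as induction on integer $\gamma$. The starting point is to take the $h^\gamma$-inner product of the equation $u' - \Delta u = f(u)$ with $u$; using identity (\ref{ipart}) this produces
$$\frac{1}{2}\frac{d}{dt}\|u\|_{h^\gamma}^2 + \|\nabla u\|_{h^\gamma}^2 = \langle f(u), u\rangle_{h^\gamma},$$
valid for all real $\gamma$ where the Fourier-defined pairings make sense. At the base case $\gamma = 0$ I would bound the right side by $\|f\|_\infty|\Omega|^{1/2}\|u\|_{L^2}$ and apply Young's and Gr\"onwall's inequalities, obtaining the $\sup_t \|u\|_{L^2}^2$ bound and, after integration in time, the bound on $\int_0^T \|\nabla u\|_{L^2}^2 dt$; these together give (\ref{guaten}) at $\gamma = 0$ since $\|u\|_{H^1}^2 \simeq \|u\|_{L^2}^2 + \|\nabla u\|_{L^2}^2$.

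For the inductive step, with (\ref{guaten}) already in hand for $\gamma_0$, I estimate $\langle f(u),u\rangle_{h^\gamma} \le \|f(u)\|_{h^\gamma}\|u\|_{h^\gamma}$ and invoke a Moser-type composition estimate
$$\|f(u)\|_{H^\gamma} \le C\bigl(\|u\|_\infty,\|f\|_{C^\gamma}\bigr)\,(1+\|u\|_{H^\gamma}),$$
obtained by iterating the chain rule together with the Sobolev multiplier inequality (\ref{multbasic}). Since $f \in C^\infty_c$ makes every $f^{(k)}$ bounded, the only ingredient needed is a uniform-in-$t$ $L^\infty$ bound on $u$, which for $d\le 3$ and $\gamma \ge 2$ is supplied by the Sobolev embedding $H^\gamma \hookrightarrow L^\infty$ combined with the inductive bound on $\sup_t \|u\|_{H^{\gamma_0}}$ (for $\gamma=1$ the identity $\nabla f(u)=f'(u)\nabla u$ handles the estimate directly, without needing $L^\infty$ control of $u$). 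A further application of Young and Gr\"onwall then advances the induction, yielding both $\sup_t\|u\|_{H^\gamma}$ and $\int_0^T\|u\|_{H^{\gamma+1}}^2 dt$.

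For the time-derivative bound (\ref{tbound}) I would simply read off the PDE: $\|u'\|_{H^{\gamma-1}} \lesssim \|u\|_{H^{\gamma+1}} + \|f(u)\|_{H^{\gamma-1}}$ and $\|u'\|_{H^{\gamma-2}} \lesssim \|u\|_{H^\gamma} + \|f(u)\|_{H^{\gamma-2}}$, after which integration in time (resp.\ taking essential supremum) combined with (\ref{guaten}) and the composition estimate closes the argument. The continuity $u \in C([0,T], H^\gamma)$ then follows from the standard Lions--Magenes embedding $L^2([0,T], H^{\gamma+1}) \cap H^1([0,T], H^{\gamma-1}) \hookrightarrow C([0,T], H^\gamma)$ (e.g.\ \cite{E10}, Ch.~5.9, Thm.~3). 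Finally, if $\theta \in C^\infty(\Omega)$ then the above applies for every integer $\gamma$, so $u_\theta \in C([0,T], H^\gamma)$ for all $\gamma$; Sobolev embedding gives $u_\theta \in C([0,T], C^b(\Omega))$ for all $b>0$, whereupon $u'_\theta = \Delta u_\theta + f(u_\theta)$ lies in $L^\infty([0,T], C^b)$, delivering $u_\theta \in C^{1,\infty}$ per (\ref{notsosmooth}).

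The main obstacle is the composition estimate at integer $\gamma > 2$ in dimension $d > 3$: the chain rule produces terms like $f^{(k)}(u)(\nabla u)^{\otimes k}$ whose $L^2$ control requires Sobolev embeddings $H^{\gamma-k} \hookrightarrow L^p$ for $p$-exponents that fail in high dimensions without H\"older/Schauder estimates. This is precisely why the statement restricts to $d \le 3$, and why the authors flag Sec.~5.2 of \cite{M89} for the extension. For $d\le 3$ and integer $\gamma$, however, all requisite products can be dominated by iterated use of (\ref{multbasic}) and the proof goes through cleanly.
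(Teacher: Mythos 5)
Your overall strategy---the energy identity $\tfrac12\tfrac{d}{dt}\|u\|_{h^\gamma}^2+\|\nabla u\|_{h^\gamma}^2=\langle f(u),u\rangle_{h^\gamma}$, followed by induction on integer $\gamma$ and a Galerkin closure---is the same as the paper's. But your handling of the right-hand side has a gap that prevents the induction from closing precisely at the step $\gamma=2$ (and then at $\gamma=3$) when $d=2,3$.

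You estimate $\langle f(u),u\rangle_{h^\gamma}\le\|f(u)\|_{h^\gamma}\|u\|_{h^\gamma}$ and then invoke the Moser composition bound $\|f(u)\|_{H^\gamma}\le C(\|u\|_\infty)(1+\|u\|_{H^\gamma})$. For that constant to stay bounded you need $\sup_t\|u(t)\|_\infty<\infty$, which you propose to extract from the inductive hypothesis via Sobolev embedding. But the hypothesis at step $\gamma$ gives only $\sup_t\|u\|_{H^{\gamma-1}}$. At $\gamma=2$ this means $\sup_t\|u\|_{H^1}$, and $H^1\not\hookrightarrow L^\infty$ in $d=2,3$. Without $L^\infty$ control the Moser constant is not available, and the na\"ive Sobolev bound for the worst Fa\`a di Bruno term $\|f''(u)|\nabla u|^2\|_{L^2}\lesssim\|\nabla u\|_{L^4}^2\lesssim\|u\|_{H^1}^{1/2}\|u\|_{H^2}^{3/2}$ (for $d=3$, by Gagliardo--Nirenberg) is superlinear in $\|u\|_{H^2}$; the resulting differential inequality $\tfrac{d}{dt}\|u\|_{h^2}^2\lesssim 1+\|u\|_{h^2}^{5/2}$ can blow up on $[0,T]$ and Gr\"onwall does not close.

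The paper avoids this by the asymmetric Cauchy--Schwarz estimate $\langle f(u),u\rangle_{h^\gamma}\le\|f(u)\|_{h^{\gamma-1}}\|u\|_{h^{\gamma+1}}$, which trades one derivative from $f(u)$ onto $u$; the factor $\|u\|_{h^{\gamma+1}}$ is then absorbed by Young's inequality into the dissipation $\|\nabla u\|_{h^\gamma}^2$ on the left. Consequently only $\|f(u)\|_{H^{\gamma-1}}$ must be controlled, one degree of regularity lower than in your scheme, and this the inductive hypothesis does provide: at $\gamma=2$, $\|f(u)\|_{H^1}\le C+\|f'\|_\infty\|u\|_{H^1}$, while at the critical $\gamma=3$ for $d=3$ the term $\|f''(u)D^{\alpha_1}uD^{\alpha_2}u\|_{L^2}$ is handled by Ladyzhenskaya's inequality using the already-available $\sup_t\|u\|_{H^2}$. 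Note also that your emphasis on $\|u\|_\infty$ is somewhat misdirected: because $f\in C^\infty_c$, every $f^{(k)}$ is globally bounded, so $\|f^{(k)}(u)\|_\infty\le\|f^{(k)}\|_\infty$ automatically; the genuine difficulty is the products of derivatives of $u$ appearing in Fa\`a di Bruno, which is what the derivative gain (and Ladyzhenskaya) resolve. You should rework the inductive step along the paper's lines, using $\|f(u)\|_{h^{\gamma-1}}$ instead of $\|f(u)\|_{h^\gamma}$.
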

\begin{proof}

We first assume the solutions $u$ are sufficiently regular and establish the a-priori estimates (\ref{guaten}), (\ref{tbound}) in Steps 1 and 2. The general case then follows from a Galerkin argument in Step 3.

\smallskip

\textbf{Step 1: Preliminary identities and estimates.}
Differentiating the squared $h^\gamma$-norm and using (\ref{ipart}) we obtain the identity
\begin{align}\label{1stbd}
\frac{1}{2} \frac{d}{dt} \|u\|_{h^\gamma}^2 + \|\nabla u\|_{h^\gamma}^2 &= \big \langle \frac{\partial}{\partial t} u, u \big\rangle_{h^\gamma} - \langle \Delta u, u\rangle_{h^\gamma} = \langle  f(u), u \rangle_{h^\gamma}.
\end{align}
Since $f(u)$ is uniformly bounded and of compact support, we know that $$\langle  f(u), u \rangle_{L^2} = \int_\Omega f(u) u 1_{|u| \le K} \le c_{f}<\infty, \text{ for some } K>0,$$ and all $u \in L^2$, so that (\ref{1stbd}) gives for some constant $c_f>0$,
\begin{equation}\label{addbd}
\frac{d}{dt}\|u(t)\|^2_{L^2} + 2 \|\nabla u(t)\|^2_{L^2} \le 2c_f
\end{equation}
which we can integrate to obtain $\sup_{0 \le t \le T}\|u(t)\|_{L^2} \le 2T c_f + \|u(0, \cdot)\|_{L^2}<\infty$ and
\begin{equation}\label{another}
 \int_0^T \|\nabla u(t)\|_{L^2}^2 dt \le Tc_f + \|u(0,\cdot)\|_{L^2}^2 \le c(B,f,T)<\infty.
 \end{equation}
Since $\|u\|^2_{H^1} \simeq \|u\|^2_{L^2} + \|\nabla u\|_{L^2}^2$ this already proves the case $\gamma =0$ in (\ref{guaten}).

Next by definition of the $h^\gamma$-norms and the Cauchy-Schwarz inequality, the r.h.s.~in (\ref{1stbd}) can be upper bounded as $$\langle  f(u), u \rangle_{h^\gamma} \le \|f(u)\|_{h^{\gamma-1}} \|u\|_{h^{\gamma+1}} \lesssim \|f(u)\|_{h^{\gamma-1}} (\|\nabla u\|_{h^{\gamma}} + \|u\|_{L^2})$$
and then using Young's inequality for products this is further bounded by
$$2\|f(u)\|^2_{h^{\gamma-1}} +  \frac{1}{4} \|\nabla u\|^2_{h^{\gamma}}+ \frac{1}{4}\|u\|^2_{L^2}.$$
We conclude from this, (\ref{1stbd}) and the bound for $\|u\|_{L^2}$ just established for $\gamma=0$ that
\begin{equation}\label{doo}
 \frac{d}{dt} \|u\|_{h^\gamma}^2 + \|\nabla u\|_{h^\gamma}^2 \lesssim \|f(u)\|^2_{h^{\gamma-1}} + c
 \end{equation}
holds for some uniform constant $c>0$ and all $\gamma \ge 1$.

\smallskip

\textbf{Step 2: Proof of a priori estimates.} We now turn to the proof of (\ref{guaten}) for $\gamma \ge 1$. We argue by induction on $\gamma$ and assume the result has been proved for $\gamma -1$. The induction hypothesis provides us with $$\|u(t)\|_{L^2} + \|u(t)\|_{h^{\gamma-1}} \le C,~ \gamma \ge 1,$$ with constant $C=C(T,B)$. We show that as a consequence the r.h.s.~in (\ref{doo}) is bounded by a fixed constant. This is clear for $\gamma =1$ and for $\gamma=2$ we have from the chain rule and for multi-index $\alpha$ that 
\begin{equation} \label{dooag}
\|f(u)\|_{h^1}\lesssim \|f(u)\|_{H^1} \lesssim \|f(u)\|_{L^2} + \max_{|\alpha|=1}\|f'(u) D^\alpha u\|_{L^2} \le c+\|f'\|_\infty \|u\|_{h^{\gamma-1}} \le c'<\infty.
\end{equation}
For $\gamma=3$ we have $\|f(u)\|_{h^2} \lesssim \|f(u)\|_{H^2}\lesssim c + \max_{|\alpha|=2}\|D^\alpha(f(u))\|_{L^2}$ and need to estimate the last term: we use the chain rule combined with the Cauchy-Schwarz and Ladyzhenskaya's inequality (\cite{R01}, p.244) to obtain, with $|\alpha_i|=1$, the bound
\begin{align}\label{critchain}
& \|f''\|_\infty \big[\int_\Omega (D^{\alpha_1}u D^{\alpha_2}u)^2  \big]^{1/2} + \|f'\|_\infty \|u\|_{H^2} \lesssim  \|D^{\alpha_1}u\|_{L^4} \|D^{\alpha_2}u\|_{L^4} + \|u\|_{H^2} \\
&\lesssim \|u\|_{H^1}^{2-d/2} \|u\|_{H^2}^{d/2}  + \|u\|_{H^2}  \lesssim \|u\|_{H^2}^{d/2} +\|u\|_{H^2} \le C<\infty. \notag
\end{align}
[This argument is for $d=2,3$ -- for $d=1$ one can argue directly via (\ref{multbasic}).] For $\gamma = 4$, we have from the chain rule and the multiplier inequality (\ref{multbasic}) for $H^{2}$-norms with $d<4$ that
\begin{align*}
\|f(u)\|_{H^3} &\lesssim c+ \max_{0 \le |\alpha| \le 1}\|f'(u) D^\alpha u\|_{H^2} \lesssim c+\|f'(u)\|_{H^2} \max_{0 \le |\alpha| \le 1}\| D^\alpha u\|_{H^2} \\
& \lesssim c+ (\|u\|_{H^2}^{d/2} + \|u\|_{H^2})\|u\|_{H^3} \le c',
\end{align*}
using the same bound as in (\ref{critchain}) but now with $f'''$ replacing $f''$. The last argument can now be iterated for all $\gamma>4$, using the multiplier inequality (\ref{multbasic}) for $H^{\gamma-2}$-norms to see
$$\|f(u)\|_{H^{\gamma-1}} \lesssim c+ \max_{0 \le |\alpha| \le 1}\|f'(u) D^\alpha u\|_{H^{\gamma-2}} \lesssim c+ \|f'(u)\|_{H^{\gamma-2}} \|u\|_{H^{\gamma-1}} \le C,$$ since the bound for $\|f'(u)\|_{H^{\gamma-2}}$ is the same as the one for $\|f(u)\|_{H^{\gamma-2}}$ up to adjusting the constants to depend on $f'$ rather than on $f$.

Summarising, assuming the induction hypothesis with $\gamma-1$, we have shown the inequality $$ \frac{d}{dt} \|u\|_{h^\gamma}^2 + \|\nabla u\|_{h^\gamma}^2 \le c_0,$$ for some uniform constant $c_0>0$. Since $\|u(0)\|_{h^\gamma} = \|\theta\|_{h^\gamma} \lesssim B$ by hypothesis, this can be integrated just as in Step 1 with $\gamma=0$ to give 
$$ \|u(t)\|_{h^\gamma}^2 + \int_0^T \|\nabla u(t)\|_{h^\gamma}^2 dt \le c_1(B,f,\gamma,T)<\infty,$$ which proves (\ref{guaten}), using also that  $\|u\|^2_{H^{\gamma+1}} \simeq \|u\|^2_{L^2} + \|\nabla u\|_{h^\gamma}^2$. Finally the first estimate in (\ref{tbound}) follows from integrating over $[0,T]$ the inequality
\begin{equation*}
\big\|u'_\theta(t)\big\|_{h^{\gamma-1}}^2 = \|\Delta u_\theta(t) + f(u_\theta(t,\cdot))\|_{h^{\gamma-1}}^2 \lesssim \|u_\theta(t)\|_{h^{\gamma+1}}^2 + \|f(u_\theta(t,\cdot))\|^2_{h^{\gamma-1}},
\end{equation*}
and the preceding estimates. Applying the preceding inequality with $\gamma-1$ replaced by $\gamma-2$ gives the second part of (\ref{tbound}).

\smallskip

\textbf{Step 3: Conclusion of the proof.} We now employ the usual Galerkin argument: We can use standard ODE theory to construct smooth solutions $u_J$ to (\ref{weakpde}) in the finite dimensional space $E_J, J \in \N,$ from (\ref{ej}), with smooth $P_{E_J}f$ and initial conditions $P_{E_J}\theta \in E_J $, where $P_{E_J}$ is the $L^2$-projector onto $E_J$. Note that $$\|P_{E_J}h-h\|_{h^\gamma} \to_{J \to \infty} 0,~~ \|P_{E_J}h\|_{h^\gamma} \le \|h\|_{h^\gamma} ~ \forall h \in H^\gamma.$$ This and the estimates from above then imply uniform in $J$ bounds for the norms of $u_J$ in (\ref{guaten}) and (\ref{tbound}). We can take weak sub-sequential limits as $J \to \infty$ (by the Banach Alaoglu theorem in the corresponding Hilbert spaces), and the limits provide weak solutions of (\ref{weakpde}). Since $\theta \in H^1$, Theorem \ref{exist} implies that these weak solutions coincide with the unique strong solutions, in particular they inherit the same norms bounds as weak limits in Hilbert space. The details are as on p.222f in \cite{R01} and left to the reader.

Finally, the time continuity of the solutions follows as in Corollary 7.3 in \cite{R01}. If $\theta \in C^\infty$ then $\theta \in H^\gamma$ for every $\gamma$ and (\ref{guaten}), (\ref{tbound}) imply that $\|u_\theta\|_{H^\gamma} + \|u'_\theta\|_{H^{\gamma-2}}$ are bounded on $0<t<T$. An application of the Sobolev imbedding theorem with $\gamma-2>b+d/2$ then bounds the $C^b$norms uniformly on $[0,T]$, for any $b$.
\end{proof}

\subsubsection{Regularity of the forward map $\G$}

We can now derive some first properties of the `forward' map 
\begin{equation} \label{fwdmap}
\G(\theta)=u_\theta,~~ \G: H^1(\Omega) \to C([0,T], L^2(\Omega)),
\end{equation} 
arising from the solution operator $\theta \mapsto u_\theta$ of the PDE (\ref{PDE}) constructed in Theorem \ref{exist}.

\begin{corollary}\label{yetagain}
Let $f \in C^\infty_c(\R)$ be fixed and $d \le 3$. Then hypotheses B) and A) of Condition \ref{gemol} are satisfied, and with any $\bar \gamma>d/2$ in case A).
\end{corollary}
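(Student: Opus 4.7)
Both hypotheses follow quickly from material already developed in Theorem \ref{exist} and Proposition \ref{roundrobin}, combined with two standard tools: Sobolev imbedding for A), and a Gronwall-type energy estimate for B). Throughout, fix $f\in C^\infty_c(\R)$, so that $f, f'$ are uniformly bounded and Lipschitz on $\R$.

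For hypothesis A), given $\bar\gamma>d/2$ and any $\theta$ with $\|\theta\|_{H^{\bar\gamma}}\le B$, Proposition \ref{roundrobin} yields $\sup_{0\le t\le T}\|u_\theta(t)\|_{H^{\bar\gamma}}\le C(B,f,\bar\gamma,d)$ (using the case $\gamma=\bar\gamma$, noting that the hypothesis $d\le 3$ covers the range $\bar\gamma\le 2$ as well as $\bar\gamma>2$). By the Sobolev imbedding $H^{\bar\gamma}(\Omega)\hookrightarrow C(\Omega)$, valid since $\bar\gamma>d/2$, this translates directly into a uniform pointwise bound $\sup_{t,x}|u_\theta(t,x)|\le u(B)<\infty$ on $U(\bar\gamma,B)$, which is exactly (\ref{ubd}).

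For hypothesis B), let $\theta_1,\theta_2\in U(1,B)$ and set $u_i=\G(\theta_i)$, $w=u_1-u_2$. Subtracting the two copies of the strong form of (\ref{PDE}) gives
\begin{equation*}
\partial_t w - \Delta w = f(u_1)-f(u_2),\qquad w(0)=\theta_1-\theta_2.
\end{equation*}
Pairing with $w$ in $L^2(\Omega)$, integrating by parts as in (\ref{ipart}), and using $|f(u_1)-f(u_2)|\le \|f'\|_\infty|w|$ together with Cauchy--Schwarz gives
\begin{equation*}
\tfrac12\tfrac{d}{dt}\|w(t)\|_{L^2}^2+\|\nabla w(t)\|_{L^2}^2 \le \|f'\|_\infty\|w(t)\|_{L^2}^2.
\end{equation*}
Dropping the nonnegative gradient term and applying Gronwall's inequality yields $\|w(t)\|_{L^2}^2\le e^{2\|f'\|_\infty t}\|\theta_1-\theta_2\|_{L^2}^2$ for all $t\in[0,T]$. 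Integrating this over $[0,T]$ gives the required bound
\begin{equation*}
\|\G(\theta_1)-\G(\theta_2)\|_{L^2([0,T],L^2(\Omega))}^2 \le T e^{2\|f'\|_\infty T}\|\theta_1-\theta_2\|_{L^2(\Omega)}^2,
\end{equation*}
with constant depending only on $f$ and $T$ (so in fact independent of $B$, but certainly valid on $U(1,B)$).

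There is no real obstacle here: both parts are immediate consequences of estimates already proved. The only minor point worth noting is that the $H^1$-regularity $\theta\in U(1,B)$ assumed in B) is needed only to ensure existence of strong solutions via Theorem \ref{exist}; the Lipschitz bound itself is driven purely by the $L^2$-distance of the initial data.
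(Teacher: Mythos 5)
Your proposal is correct and follows essentially the same route as the paper: Part A) via (\ref{guaten}) and Sobolev imbedding, Part B) via the $L^2$-energy identity, a Lipschitz bound on $f$, Gronwall's inequality, and integration over $[0,T]$ (the paper records the intermediate Gronwall conclusion as (\ref{tbdu})). Your remark that the constant in B) is in fact independent of $B$, and that $H^1$-regularity is only used to guarantee existence of strong solutions, is a sound observation but does not change the substance.
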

\begin{proof}
Part A) follows from (\ref{guaten}) and the Sobolev imbedding. For Part B), we use (\ref{ipart}) and obtain for $w=u_\theta-u_\vartheta$ and all $0<t<T$ that
$$\frac{1}{2}\frac{d}{dt}\|w\|_{L^2}^2 + \|\nabla w\|_{L^2}^2 = \langle f(u_\theta)-f(u_\vartheta), u_\theta - u_\vartheta \rangle_{L^2} \le \|f\|_{Lip} \|u_\theta - u_\vartheta\|_{L^2}^2 \le c \|w\|_{L^2}^2$$ which combined with Gronwall's inequality gives 
\begin{equation} \label{tbdu}
\|w(t)\|_{L^2}^2 \lesssim \|w(0)\|_{L^2}^2,~~0<t<T.
\end{equation}
Integrating this bound over $[0,T]$ completes the proof.
\end{proof}

Note that we can combine the last two inequalities further to obtain as well
\begin{equation*} 
\int_0^T \|\nabla w(t)\|_{L^2}^2dt \lesssim \int_0^T \|w(t)\|_{L^2}^2dt - \int_0^T \frac{d}{dt}\|w(t)\|_{L^2}^2dt \lesssim \|w(0)\|_{L^2}^2
\end{equation*}
which then also implies
\begin{equation}\label{auxnew}
\int_0^T \|u_\theta(t) - u_\vartheta(t)\|_{H^1}^2dt \lesssim  \int_0^T\|u_\theta(t) - u_\vartheta(t)\|_{L^2}^2dt + \int_0^T\|\nabla (u_\theta(t) - u_\vartheta(t))\|_{L^2}^2 dt \lesssim \|\theta - \vartheta\|^2_{L^2},
\end{equation}
which we shall use below.

\smallskip

To proceed consider now any $\theta, \theta_0$ such that $\|\theta\|_{H^{\bar \gamma}} +\|\theta_0\|_{H^{\bar \gamma}} \le B, \bar \gamma >\max (1, d/2),$ and let $u_\theta, u_{\theta_0} \in C([0,T], H^{\bar \gamma})$ be the corresponding strong solutions of (\ref{PDE}) from Proposition \ref{roundrobin} with initial conditions $\theta, \theta_0$. Then  $w(t)=u_{\theta}(t,\cdot) - u_{\theta_0}(t, \cdot)$ satisfies on $(0,T] \times \Omega$ 
\begin{equation}\label{mvt}
\frac{\partial}{\partial t} w - \Delta w  =  f(u_{\theta}) - f(u_{\theta_0}) = v(f,\theta, \theta_0) (u_\theta-u_{\theta_0})
\end{equation}
with $w(0)=\theta-\theta_0$ and  $$v(f,\theta,\theta_0)(t,x) = \int_0^1 f'(u_{\theta_0}(t,x)+ s(u_\theta(t,x) - u_{\theta_0}(t,x))ds,~~ 0 \le s \le 1.$$ Here we have applied the fundamental theorem of calculus to the map $$H: s \mapsto f(u_{\theta_0}(t,x)+ s(u_\theta(t,x) - u_{\theta_0}(t,x))$$ for $t,x$ fixed, such that $f(u_{\theta}) - f(u_{\theta_0})=H(1)-H(0)=\int_0^1 H'(s)ds$.

 We see that $w$ solves the linear PDE (\ref{PDElin}) below with potential $V=v(f, \theta, \theta_0)$, $m=0$ and initial condition $\theta-\theta_0$. We thus deduce from Lemma \ref{paristexas} below the stability estimate
\begin{equation}\label{dochparis}
\|u_{\theta} - u_{\theta_0}\|^2_{L^2([0,T], L^2(\Omega))} = \|w\|^2_{L^2([0,T], L^2(\Omega))} \ge c \|\theta-\theta_0\|^2_{H^{-1}},
\end{equation}
for some $c>0$ depending only on $d,T,B$, noting that the $C^1([0,T], C^1)$-norm of $V=v(f, \theta, \theta_0)$ is bounded by a fixed constant $M=M(B)>0$, seen as follows: Let us write shorthand $\tilde u = (1-s) u_{\theta_0} + s u_{\theta}$. First we have from (\ref{guaten}) and the Sobolev imbedding with $\bar \gamma >1+d/2$ that $$\|f'(\tilde u(t))\|_{C^1} \lesssim \|f\|_\infty + \|f''\|_{\infty} \|\tilde u(t)\|_{C^1} \lesssim c +  \sup_{\|\theta\|_{H^{\bar \gamma} \le B}} \|u_\theta(t)\|_{H^{\bar \gamma}} \le M$$ for every $t \in [0,T]$.  Next, for any multi-index $\beta$ and partial differential operator $D^\beta$, the chain rule implies, for almost every $0<t<T$,
\begin{align*}
 \|f''(\tilde u(t)) \tilde u'(t)\|_{C^1} & \lesssim \|f''(\tilde u(t)) \tilde u'(t)\|_{\infty} + \max_{|\beta|=1}\|f'''(\tilde u(t)) D^\beta \tilde u(t) \tilde u'(t) + f''(\tilde u(t)) D^\beta \tilde u'(t)\|_{\infty} \\
& \lesssim \|\tilde u'(t)\|_{\infty} (1 + \|\tilde u(t)\|_{C^1}) + \|\tilde u'(t)\|_{C^1} \\
& \lesssim \sup_{\|\theta\|_{H^{\bar \gamma} \le B}} [\|u_\theta'(t)\|_{\infty} (1 + \|u_\theta(t)\|_{C^1}) + \|u_\theta'(t)\|_{C^1}] \le c<\infty,
\end{align*}
 using (\ref{tbound}) and the Sobolev imbedding $H^{\bar \gamma-2} \subset C^1$ and $\bar \gamma>3+d/2$. These bounds are uniform in $s$ and integrating them $ds$ yields the desired result.

\medskip

Similar to above we can take $w=u_\theta-u_{\vartheta}$ for $\theta, \vartheta \in H^\zeta \cap H^1$ and use the fundamental theorem of calculus as well as the Cauchy-Schwarz and the multiplier inequalities (\ref{multbasic}) for any $\zeta >d/2$ to deduce the inequality
$$\frac{1}{2}\frac{d}{dt}\|w\|_{h^\zeta}^2 + \|\nabla w\|_{h^\zeta}^2 = \langle v(f,\theta, \vartheta)(u_\theta-u_\vartheta), u_\theta - u_\vartheta \rangle_{h^\zeta} \le \|v(f,\theta, \vartheta)\|_{H^\zeta} \|u_\theta - u_\vartheta\|_{H^\zeta}^2 \le c \|w\|_{h^\zeta}^2$$ for all $0<t<T$, where $\|f'(\tilde u)\|_{H^\zeta}$ can be bounded as in the proof of Proposition \ref{roundrobin} by a constant depending only on $f$ and $B \ge \|\theta\|_{H^\zeta} + \|\vartheta\|_{H^\zeta}$. This implies by Gronwall's inequality and the Sobolev imbedding $H^\zeta \subset L^\infty$ that 
\begin{equation} \label{h2estimate}
\sup_{0<t<T}\|u_\theta(t)-u_{\vartheta}(t)\|_\infty \lesssim \sup_{0<t<T}\|u_\theta(t)-u_{\vartheta}(t)\|_{H^\zeta} \lesssim \|\theta-\vartheta\|_{H^\zeta}.
\end{equation}

\subsubsection{The linearisation of $\G$ as a Schr\"odinger equation}

The preceding argument (\ref{mvt}) is only a `pseudo'-linearisation argument since the potential $V$ featuring in the linear PDE still depends on $\theta, \theta_0$ via $\tilde u$. The following theorem derives the full linearisation of the map $\G$ from (\ref{fwdmap}) near a fixed point $\theta_0$.

\begin{theorem}\label{linrob}
Let $f\in C^\infty_c(\R)$ and $\theta_0 \in C^\infty$ be fixed and let $h \in H^{\bar \gamma}, \bar \gamma>\max(1,d/2), d \le 3,$ be such that $\|\theta_0+h\|_{H^{\bar \gamma}} \le B.$ Denote by  $u_{\theta_0}, u_{\theta_0+h} \in L^\infty([0,T], H^{\bar \gamma})$ the strong solutions of (\ref{PDE}) from Proposition \ref{roundrobin} with initial conditions $\theta_0, \theta_0+h$, respectively. Then  there exists a constant $C=C(B, r, \bar \gamma, T)>0$ such that
\begin{equation} \label{quadap}
\sup_{0<t<T}\|u_{\theta_0+h}(t) - u_{\theta_0}(t) - U_{f,\theta_0, h}(t)\|_{L^2} \leq C \|h\|^2_{L^2} 
\end{equation}
where $$U_h=U_{f,\theta_0, h}=:\mathbb I_{\theta_0}[h]$$ is the unique solution to the time-dependent \textit{linear} Schr\"odinger equation (\ref{PDElin}) with $m=0$, initial condition $h$, and potential $V(t, \cdot)=f'(u_{\theta_0}(t, \cdot)) \in C^{1,\infty}$ from (\ref{notsosmooth}).
\begin{proof}
Let us denote the error in the linear approximation by $$r(t) =u_{\theta_0+h}(t) - u_{\theta_0}(t) - U_{f,\theta_0, h}(t)$$ and notice that $r$ solves the linear PDE
\begin{equation}
\frac{\partial}{\partial t} r - \Delta r - f'(u_{\theta_0}) r = m,~~\text{where } m = f(u_{\theta_0+h}) - f(u_{\theta_0}) - f'(u_{\theta_0})(u_{\theta_0+h}-u_{\theta_0}),
\end{equation}
with initial condition $r(0)=\theta_0+h-\theta_0 - h=0$. We can apply the uniform in $0<t<T$ regularity estimate (\ref{inhom}) to $r$ with $a=0$, $h=0$ (noting also that the stipulated regularity of $V$ follows from the chain rule and Proposition \ref{roundrobin} for smooth $\theta_0$). Using also a second order Taylor expansion of $f$, (\ref{multbasicn}), the Cauchy-Schwarz inequality twice, the Sobolev embedding $H^1 \subset L^4$ (since $d \le 3$), and writing $w=u_{\theta_0+h}-u_{\theta_0}$ we thereby obtain
\begin{align*}
 \sup_{0<t<T}\|r(t)\|^2_{L^2}  &\lesssim \int_0^T\|m(t)\|^2_{H^{-1}}dt \lesssim \|f\|_{C^3} \int_0^T \|w(t)^2\|_{H^{-1}}^2dt \\
&\lesssim 
  \int_0^T \Big[\sup_{\|\phi\|_{H^1} \le 1}\int_\Omega \phi(x) w^2(t,x) dx \Big]^2dt \\
&\lesssim \int_0^T\|w(t)\|_{L^4}^2 \|w(t)\|_{L^2}^2 \|\phi\|_{L^4}^2dt \lesssim \int_0^T \|w(t)\|_{H^1}^2 \|w(t)\|_{L^2}^2  dt \\
&\lesssim \sup_{0<t<T}\|w(t)\|_{L^2}^2 \int_0^T \|w(t)\|_{H^1}^2  \lesssim \|h\|_{L^2}^4,
\end{align*}
where the last inequalities follow from the estimates (\ref{tbdu}), (\ref{auxnew}).
\end{proof}
\end{theorem}

We further record here the consequence of Theorem \ref{linrob} and Proposition \ref{roundrobinlin} that the mapping $\mathbb I_{\theta_0}$ is continuous from $L^2(\Omega)$ to $L^2([0,T], L^2(\Omega))$. In fact we also have
\begin{equation}\label{izeta}
\sup_{0<t<T}\|\mathbb I_{\theta_0}[h](t,\cdot)\|_{\infty} \lesssim \sup_{0<t<T} \|U_h(t,\cdot)\|_{H^\zeta} \lesssim \|h\|_{H^\zeta},~\zeta>d/2,
\end{equation}
in view of Proposition \ref{roundrobinlin} and the Sobolev imbedding $H^\zeta \subset L^\infty$.

\subsection{Regularity theory for time-dependent linear Schr\"odinger equations}

For $\Omega = [0,1]^d, d \in \N$, we now turn to a detailed study of the solution operator $h \mapsto U_h$ of the linear time-dependent periodic Schr\"odinger equation 
\begin{align}\label{PDElin}
\frac{\partial}{\partial t} U(t,\cdot) - \Delta U(t, \cdot) - V(t, \cdot) U(t, \cdot) &=m(t, \cdot)~~\text{on } (0,T] \times \Omega, \notag \\
U(0, \cdot) &= h~~\text{on } \Omega,
\end{align}
for a general uniformly bounded potential $V \in C([0,T], C(\Omega))$, initial condition $h \in H^1(\Omega)$, and source term $m \in L^2([0,T],L^2)$.  The existence of unique weak solutions $$U \in L^2([0,T], H^1) \cap C([0,T], L^2) \text{ with } U' = (\partial U/\partial t) \in L^2({0,T}, H^{-1})$$ satisfying for all $v \in H^1$ and a.e.~$t \in [0,T]$ the equations
\begin{align} \label{weakpdelin}
 \langle U', v \rangle_{L^2} + \langle \nabla U, \nabla v \rangle_{L^2} - \langle V(t,\cdot) U, v \rangle_{L^2} &= \langle m(t, \cdot), v \rangle_{L^2}, \notag \\
U(0) &= h,
\end{align}
can be proved by standard arguments from linear parabolic PDE just as on p.380f.~in \cite{E10}. The regularity estimate Proposition \ref{roundrobinlin} below gives sufficient conditions on $V,m$ for these to be strong solutions $U \in L^2([0,T], H^2) \cap C([0,T], H^1), U' \in L^2([0,T], L^2)$ whenever $h \in H^1$. The estimates in Proposition \ref{roundrobinlin} for $a\le 0$ further imply by the usual Galerkin limiting procedure (again as on p.380f. in \cite{E10}) that unique solutions $U_h \in L^2([0,T], H^{a+1}) \cap C([0,T], H^a)$ with $U' \in L^2([0,T], H^{a-1})$ also exist for initial conditions $h \in H^{a}$ -- in this case (\ref{PDElin}) holds in $H^{a-1}$ in the sense that we are testing against $v \in H^{1-a}$ in (\ref{weakpdelin}). 

\smallskip

We will apply the estimates that follow with $V=f'(u_{\theta})$ for $u_\theta$ the solution of (\ref{PDE}), as this provides the linearisation of the forward map $\G$ derived in Theorem \ref{linrob}. For the verification of Condition \ref{gemol}E) in (\ref{dochparis}) we need to track the explicit dependence of the constants on $\theta \in U(\bar \gamma, B)$ via the $C^1([0,T], C^1)$-norm of the potential $V=f'(u_\theta)$. For the verification of Condition \ref{gemol}F) and to prove the key smoothing Lemma \ref{bootcamp}, we  consider potentials $V \in C^{1,\infty}$ from (\ref{notsosmooth}). This is compatible with our choice $V=f'(u_{\theta_0}), f \in C^\infty_c, \theta_0 \in C^\infty$, in view of Proposition \ref{roundrobin}.   

\subsubsection{A parabolic regularity estimate}

\begin{proposition}\label{roundrobinlin}
Suppose $U_h \in L^2([0,T], H^{a+1})$ with $U_h' \in L^2([0,T], H^{a-1})$ is a weak solution in $H^{a-1}$ of (\ref{PDElin}) for $h \in H^a$,  $m \in L^2([0,T], H^a)$, $a \in \R$. Then $U_h \in C([0,T], H^a(\Omega))$.  Assume further either i) that $a=-1$ and $B \ge \|V\|_{C^1([0,T], C^1)}$ for some $B>0$, or that ii) $V \in C^{1,\infty}$ from (\ref{notsosmooth}). Then we have for all $0<T_0 \le T$,
\begin{align} \label{inhom}
& \int_0^{T_0} \|U_h(t)\|_{H^{a+1}}^2 dt + \sup_{0<t<T_0} \|U_h(t)\|^2_{H^a} + \int_0^{T_0} \|U_h'(t)\|^2_{H^{a-1}}dt \\
&~~~~~~~~~~~~~~~~~~ \le C \|h\|^2_{H^a} + C\int_0^{T_0} \|m(t)\|^2_{H^{a-1}}dt \notag
\end{align}
where $C$ is a constant that depends on $B, f, T, d$ in case i) and on $a, f, T, d, V$ in case ii).  Moreover,
\begin{equation}\label{timedereg}
{\rm{ess}}\sup_{0<t<T_0}\|U'_h(t)\|_{H^{a-2}} \le C \|h\|^2_{H^{a-1}} + C\int_0^{T_0} \|m(t)\|^2_{H^{a}}dt + C {\rm{ess}}\sup_{0<t<T_0}\|m(t)\|_{H^{a-2}}^2.
\end{equation}

\end{proposition}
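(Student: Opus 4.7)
The plan is to establish (\ref{inhom}) by an energy estimate in the sequence-space Sobolev norm $\|\cdot\|_{h^a}$, paralleling the energy-method proof of Proposition \ref{roundrobin}, with the extra technical point being the treatment of the potential term $\langle VU,U\rangle_{h^a}$. I would first construct smooth Galerkin approximations $U_J$ valued in the finite-dimensional eigenspace $E_J$ from (\ref{ej}), solving the ODE obtained by applying $P_{E_J}$ to (\ref{PDElin}) with initial data $P_{E_J}h$ and source $P_{E_J}m$. Differentiating $\|U_J(t)\|_{h^a}^2$ and using the integration-by-parts identity (\ref{ipart}) then yields the basic energy identity
$$\tfrac{1}{2}\tfrac{d}{dt}\|U_J\|_{h^a}^2 + \|\nabla U_J\|_{h^a}^2 \;=\; \langle VU_J,U_J\rangle_{h^a} + \langle m,U_J\rangle_{h^a}.$$

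The source term is handled by Young's inequality, $|\langle m,U_J\rangle_{h^a}| \le \tfrac{1}{2}\|m\|_{h^a}^2 + \tfrac{1}{2}\|U_J\|_{h^a}^2$. The potential term is treated differently in the two cases. In case (i) with $a=-1$, Cauchy--Schwarz in $h^{-1}$ gives $|\langle VU_J,U_J\rangle_{h^{-1}}| \le \|VU_J\|_{H^{-1}}\|U_J\|_{H^{-1}}$, and the duality (\ref{dual}) together with (\ref{multbasicn}) yields $\|VU_J\|_{H^{-1}} \lesssim \|V(t,\cdot)\|_{C^1}\|U_J\|_{H^{-1}} \le CB\|U_J\|_{h^{-1}}$, with the linear dependence on $\|V\|_{C^1([0,T],C^1)}$ tracked explicitly. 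In case (ii) for general $a$, Cauchy--Schwarz in $h^a$ combined with the multiplier inequality (\ref{multbasic}) (for $a\ge 0$) or (\ref{multbasicn}) (for $a<0$) bounds the term by $C\|V(t,\cdot)\|_{C^b}\|U_J\|_{h^a}^2$ for a $b$ depending on $a$, which is finite because $V\in C^{1,\infty}$. In both cases the result is $\tfrac{d}{dt}\|U_J\|_{h^a}^2 + \|\nabla U_J\|_{h^a}^2 \le C\|U_J\|_{h^a}^2 + \|m\|_{h^a}^2$, and Gronwall's inequality followed by integration in time yields, uniformly in $J$, the sup-in-$t$ and $L^2_t H^{a+1}$ bounds asserted in (\ref{inhom}). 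The bound on $\int\|U_J'\|_{h^{a-1}}^2$ follows algebraically from $U_J' = \Delta U_J + P_{E_J}(VU_J) + P_{E_J}m$ together with $\|\Delta U_J\|_{h^{a-1}} \lesssim \|U_J\|_{h^{a+1}}$ and the same multiplier bounds. Extracting weak-$*$ sub-sequential limits in $L^2([0,T],H^{a+1})$, $L^\infty([0,T],H^a)$, and $L^2([0,T],H^{a-1})$ via Banach--Alaoglu yields a weak solution of (\ref{weakpdelin}) inheriting all these bounds by lower semi-continuity, and uniqueness, proved by applying the same energy estimate to the difference of two solutions with $h=0, m=0$, identifies the limit with $U_h$.

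For (\ref{timedereg}), the equation holds in $H^{a-2}$ for a.e.\ $t$, so $\|U_h'(t)\|_{H^{a-2}} \le \|\Delta U_h(t)\|_{H^{a-2}} + \|V(t)U_h(t)\|_{H^{a-2}} + \|m(t)\|_{H^{a-2}}$; the first two terms are bounded by $C\|U_h(t)\|_{H^a}$ using the same multiplier estimates, and taking essential supremum in $t$ and invoking the sup-bound from (\ref{inhom}) delivers the claim. The continuity $U_h\in C([0,T],H^a)$ is then a direct application of the standard Lions--Magenes embedding $L^2([0,T],H^{a+1})\cap H^1([0,T],H^{a-1})\hookrightarrow C([0,T],H^a)$, as used in Corollary 7.3 of \cite{R01}. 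The main obstacle is the multiplier bound in case (i): at the negative regularity level $a=-1$ one must pass through the duality (\ref{dual}) to land in $H^1$ where $V$ acts by multiplication, and crucially the constant in the resulting bound must be explicitly proportional to $\|V\|_{C^1([0,T],C^1)}$, since this is what later allows Gronwall to produce a constant depending on $V$ only through a fixed upper bound $B$ on this norm, which in turn is what powers the stability estimate (\ref{dochparis}) used to verify Condition \ref{gemol}E uniformly over balls $U(\bar\gamma,B)$.
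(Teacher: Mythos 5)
Your proposal is correct and follows essentially the same route as the paper: the energy estimate in the sequence-space $h^a$ norm via (\ref{ipart}), Cauchy--Schwarz and the multiplier inequalities (\ref{multbasic})/(\ref{multbasicn}) for the potential term, Gronwall plus time-integration for (\ref{inhom}), the algebraic bound $\|U'\|_{H^b} \lesssim \|U\|_{H^{b+2}} + \|m\|_{H^b}$ with $b=a-1$ and $b=a-2$, a Galerkin limiting argument, and time continuity from the standard embedding as in Corollary 7.3 of \cite{R01}. The paper states the Galerkin step by reference and leaves the tracking of the dependence on $\|V\|_{C^1([0,T],C^1)}$ implicit, whereas you spell these out; the substance is identical.
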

\begin{proof}
We first establish the bounds assuming the solution $U_h$ is sufficiently regular so that all expressions are well defined. One can then employ a standard Galerkin argument as in Sections 7.2-7.4 in \cite{R01} to extend the result to hold in general. Time continuity of solutions into $H^a(\Omega)$ follows as in Corollary 7.3 in \cite{R01}.

Let us write $U=U_h$ in this proof, and set $T_0=T$ without loss of generality. It suffices to establish the bound for the equivalent $h^a$-norms. From (\ref{ipart}) we have $\|\nabla U \|^2_{h^a} = -\langle \Delta U, U \rangle_{h^a}$ for any $a \in \R$. Differentiating the squared $h^a$-norm (\ref{seqnorm}) we have from the Cauchy-Schwarz, Sobolev multiplier (\ref{multbasicn}), and Young inequalities
\begin{align}\label{1stbdlin}
\frac{1}{2} \frac{d}{dt} \|U\|_{h^a}^2 + \|\nabla U \|^2_{h^a} &= \langle \frac{\partial}{\partial t} U, U \rangle_{h^a} - \langle \Delta U, U\rangle_{h^a} \\
&= \langle VU, U \rangle_{h^a} + \langle m, U \rangle_{h^a} \notag \\
& \le \|V U\|_{h^a} \|U\|_{h^a} + \|m\|_{h^{a-1}} \|U\|_{h^{a+1}} \notag \\
&\lesssim (1+\|V\|_{C^{|a|}}) \|U\|_{h^a}^2 +  \|m\|_{h^{a-1}}^2 + \frac{1}{2} \|\nabla U\|_{h^a}^2. \notag
\end{align}
using also that $\|\cdot\|_{h^a} + \|\nabla (\cdot)\|_{h^a}$ is equivalent to the $\|\cdot\|_{h^{a+1}}$-norm. Subtracting $\|\nabla U\|_{h^a}^2/2$ implies first that $\frac{d}{dt}\|U(t)\|^2_{h^a} \lesssim \|U(t)\|^2_{h^a} + \|m(t)\|_{h^{a-1}}^2$ for all $0<t<T$ to which we can apply Gronwall's inequality to deduce $$\|U(t)\|^2_{h^a} \lesssim \|U(0)\|_{h^a}^2 + \int_0^t  \|m(s)\|_{h^{a-1}}^2ds \le \|h\|_{h^a}^2 + \int_0^T  \|m(s)\|_{h^{a-1}}^2ds ,~~~0 < t \le T.$$ Then integrating (\ref{1stbdlin}) we further obtain
$$\int_0^T \|\nabla U(t)\|_{h^a}^2 dt \lesssim \|h\|_{h^a}^2 + \int_0^T  \|m(t)\|_{h^{a-1}}^2dt - \frac{1}{2} \int_0^T \frac{d}{dt} \|U\|_{h^a}^2 \lesssim \|h\|_{h^a}^2 + \int_0^T  \|m(t)\|_{h^{a-1}}^2dt,$$ which in particular bounds $\int_0^T \|U(t)\|^2_{h^{a+1}}dt$ by the r.h.s.~in (\ref{inhom}). The proof of the first inequality is completed upon noting, using again (\ref{multbasic}) and the assumption on $V$, that for a.e.~$t$,
\begin{equation}
\|U'(t)\|_{H^{b}} \lesssim \|\Delta U(t)\|_{H^{b}} + \|V(t)U(t)\|_{H^{b}} + \|m(t)\|_{H^{b}} \lesssim \|U(t)\|_{H^{b+2}} + \|m(t)\|_{H^{b}},
\end{equation}
which applied with $b=a-1$ and after integrating combines with the preceding bound for $\int_0^T\|U(t)\|_{H^{a+1}}dt$ to complete the proof of (\ref{inhom}). The second inequality then follows similarly from $b=a-2$ in the last display and the preceding bound for $\|U(t)\|_{H^a}$. 
\end{proof}

\subsubsection{Forward smoothing of the semigroup}

We now strengthen the preceding estimates for strictly positive times, adapting an argument from p.294 in \cite{R01} to the present situation.

\begin{proposition}\label{fwdsmooth}
In the setting of Proposition \ref{roundrobinlin}, let $m=0$ and assume $V \in C^{1,\infty}$ from (\ref{notsosmooth}). For all fixed $0<t_0<T_0<\infty$ there exists a constant $c=c(t_0,T_0, V, a ,d)$ such that
\begin{equation}\label{bootie}
{\rm{ess}}\sup_{t_0 \le t \le T_0}\|U'_h(t)\|_{H^{a-1}} \le c \|h\|_{H^a},
\end{equation}
\begin{equation}\label{bootienull}
\sup_{t_0 \le t \le T_0}\|U_h(t)\|_{H^{a+1}} \le c \|h\|_{H^a}.
\end{equation}
\end{proposition}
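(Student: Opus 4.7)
The strategy is the classical parabolic smoothing trick: I would run a time-weighted energy estimate with weight $t$, in order to trade one factor of $t$ for one gain of spatial regularity, upgrading the integrated $L^2_t H^{a+1}_x$ bound from Proposition \ref{roundrobinlin} to a pointwise-in-$t$ bound on $\|U_h(t)\|_{H^{a+1}}$ valid for $t\ge t_0>0$.

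Working in the equivalent sequence-space norms (\ref{seqnorm}) so that $\|U\|^2_{h^{a+1}} = \|U\|^2_{h^a}+\|\nabla U\|^2_{h^a}$ exactly, I would differentiate $t\|\nabla U\|^2_{h^a}$, use the PDE $U'=\Delta U + VU$, and apply the integration-by-parts identity $\langle\nabla U',\nabla U\rangle_{h^a}=-\langle U',\Delta U\rangle_{h^a}$ that follows from (\ref{ipart0}), obtaining
\begin{align*}
\frac{d}{dt}\bigl(t\|\nabla U\|^2_{h^a}\bigr) &= \|\nabla U\|^2_{h^a} - 2t\|\Delta U\|^2_{h^a} - 2t\langle VU,\Delta U\rangle_{h^a}.
\end{align*}
Since $V\in C^{1,\infty}$, the multiplier inequality (\ref{multbasic}) (or (\ref{multbasicn}) for $a<0$) gives $\|V(t)U(t)\|_{h^a}\lesssim \|U(t)\|_{h^a}$ uniformly in $t\in[0,T_0]$, and a Cauchy--Schwarz--Young step absorbs the cross term into $t\|\Delta U\|^2_{h^a}$, leaving
\begin{equation*}
\frac{d}{dt}\bigl(t\|\nabla U\|^2_{h^a}\bigr) + t\|\Delta U\|^2_{h^a} \le \|\nabla U\|^2_{h^a} + c\,t\|U\|^2_{h^a}.
\end{equation*}

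Integrating on $[0,t]\subset[0,T_0]$ and invoking Proposition \ref{roundrobinlin} with $m=0$ to control both $\int_0^{T_0}\|\nabla U\|^2_{h^a}ds$ (via the $L^2_tH^{a+1}_x$-bound) and $\int_0^{T_0} s\|U\|^2_{h^a}ds$ (via the $\sup_t\|U\|_{H^a}$-bound), I obtain $t\|\nabla U(t)\|^2_{h^a}\le C(T_0)\|h\|^2_{H^a}$ uniformly on $(0,T_0]$. Restricting to $t\in[t_0,T_0]$ and combining with the $\|U(t)\|_{H^a}$-bound already provided by Proposition \ref{roundrobinlin} then yields (\ref{bootienull}), with constant proportional to $1+1/t_0$. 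For (\ref{bootie}), the PDE itself delivers the bound for free: $U_h' = \Delta U_h + V U_h$ implies
\begin{equation*}
\|U_h'(t)\|_{H^{a-1}} \lesssim \|U_h(t)\|_{H^{a+1}} + \|V(t)U_h(t)\|_{H^{a-1}} \lesssim \|U_h(t)\|_{H^{a+1}},
\end{equation*}
and (\ref{bootienull}) finishes the job. The distinction between $\textrm{ess}\sup$ in (\ref{bootie}) and a genuine $\sup$ in (\ref{bootienull}) reflects that $U_h'$ exists only as an $L^2_t$ object in the weak formulation, whereas $U_h$ is continuous in time into the relevant Sobolev space by Proposition \ref{roundrobinlin}.

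The main technical wrinkle is ensuring the multiplier bound $\|VU\|_{h^a}\lesssim \|U\|_{h^a}$ holds uniformly across the full range of $a$ under consideration: for $a<0$ one needs (\ref{multbasicn}) with $\|V(t)\|_{B^{|a|}}$ controlled, which is exactly what the hypothesis $V\in C^{1,\infty}$ supplies, producing a constant depending on $V$ and $a$ but independent of $t\in[0,T_0]$. As usual, the formal energy manipulations are to be justified by the Galerkin approximation scheme of Sec.~7.2--7.4 in \cite{R01} already invoked in Propositions \ref{roundrobin} and \ref{roundrobinlin}, with the a priori bounds passing to the weak limit.
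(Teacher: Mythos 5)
Your proof is correct, but it takes a genuinely different route from the paper's. You weight $\|\nabla U\|^2_{h^a}$ by $t$, integrate, and obtain (\ref{bootienull}) first; (\ref{bootie}) then falls out of the equation $U' = \Delta U + VU$ together with the multiplier bound. The paper instead differentiates the PDE in time to get an equation for $U'$, runs a time-weighted energy estimate on $g(t)=tU'(t)$ in $h^{a-1}$ (so it establishes (\ref{bootie}) first), and then recovers (\ref{bootienull}) via elliptic regularity, $\|U\|_{h^{a+1}} \lesssim \|\Delta U\|_{h^{a-1}} + \|U\|_{h^{a-1}} \lesssim \|U'\|_{h^{a-1}} + \|U\|_{h^{a-1}}$. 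Both are standard parabolic bootstrap arguments and yield the same constants up to factors of $1/t_0$. A small advantage of your version is that it never invokes $V'$: the cross term you estimate is $\langle VU,\Delta U\rangle_{h^a}$, requiring only $V(t)\in C^{|a|}$ via (\ref{multbasic})/(\ref{multbasicn}); the paper's time-differentiated equation produces a $\langle V'U,U'\rangle_{h^{a-1}}$ term, needing $V'(t)\in C^{|a-1|}$. Since the standing hypothesis $V\in C^{1,\infty}$ supplies both, this is only a cosmetic economy, but it does make your argument marginally more self-contained. Your closing observations about the $\textrm{ess}\sup$ versus $\sup$ distinction and the need for a Galerkin justification of the formal manipulations are both correct and aligned with how the paper handles them.
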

\begin{proof}
Again we prove the required estimates first assuming $h, U_h, U_h'$ are smooth and a Galerkin approximation argument just as in the proofs of Propositions \ref{roundrobin} and \ref{roundrobinlin} then implies the general result. 

It suffices to consider the equivalent $h^a$-norms. Let us write $U=U_h$ in the proof and define $g(t) = tU'(t)$ for $t\in (0,T]$ with $g(0)=0$. Differentiate the equation (\ref{PDElin}) with respect to $t$ and multiply by $tg(t)$ to deduce
\begin{align*}
\langle t^2 U', (\partial/\partial t) U' \rangle_{h^{a-1}} = t^2 \langle U', \Delta U'\rangle_{h^{a-1}} + t^2 \langle V'U + V U', U' \rangle_{h^{a-1}}~a.e. \text{ on } [0,T]
\end{align*}
which implies by (\ref{ipart}), the Cauchy-Schwarz and Young inequalities, as well as  (\ref{multbasicn}) that almost everywhere on $[0,T]$,
$$\frac{1}{2}\frac{d}{dt}\|t U'(t)\|_{h^{a-1}}^2 - t \|U'(t)\|_{h^{a-1}}^2 + t^2 \|\nabla U'(t)\|_{h^{a-1}}^2 \le c_{V, T_0} (\|U'(t)\|^2_{h^{a-1}} + \|U(t)\|_{h^{a-1}}^2)$$ for some finite constant $c_{V,T_0}$ depending on $\|V\|_{C^1([0,T], C^{|a-1|})}<\infty$. Therefore, integrating the penultimate identity over $(0,t_0)$, any $t_0>0,$ and using $g(0)=0$ as well as Proposition \ref{roundrobinlin},
\begin{align*}
&\|t_0U'(t_0)\|_{h^{a-1}}^2 + \int_0^{t_0}t^2 \|\nabla U'\|_{h^{a-1}}^2dt \\
&\le ~~\int_0^{t_0} \big[(c_{V, T_0}+t)\|U'(t)\|^2_{h^{a-1}} + c_{V,T_0}\|U(t)\|^2_{h^{a-1}}\big]dt  \lesssim \|h\|_{h^a}^2
\end{align*}
 which provides the required bound on $\|U'(t_0)\|_{H^{a-1}}$ after dividing by $t_0>0$, and proves (\ref{bootie}) since $t_0$ was arbitrary. For the final estimate we notice that for a.e.~$t \in [t_0,T_0]$,
$$\Delta U(t) = U'(t) - V(t) U(t) $$ and hence, using the standard elliptic regularity estimate $\|u\|_{h^{a+1}} \lesssim \|(Id - \Delta) u\|_{h^{a-1}}$ (e.g., via (\ref{seqnorm}) and (\ref{ipart})), (\ref{bootie}) and again Proposition \ref{roundrobinlin} we obtain for almost all $t \ge t_0$,
\begin{equation}
\|U_h(t)\|_{h^{a+1}} \lesssim \|\Delta U_h(t)\|_{h^{a-1}} + \|U_h(t)\|_{h^{a-1}} \lesssim \|U_h'(t)\|_{h^{a-1}} +  \|U_h(t)\|_{h^{a-1}} \lesssim \|h\|_{h^a}.
\end{equation}
Since $U_h \in C([0,T], H^a)$ this inequality can in fact be shown to hold everywhere on $[t_0,T_0]$ (e.g., as in Lemma 11.2 in \cite{R01}).
\end{proof}

Using the above proposition iteratively allows by a bootstrap argument to show that the mapping $h \to U_h(t_0)$ maps any $H^a$ into any $H^b$ space, reflecting the smoothing nature of the semigroup action at strictly positive times $t>0$.
\begin{lemma}\label{bootcamp}
In the setting of Proposition \ref{roundrobinlin}, assume $V$ lies in $C^{1,\infty}$ from (\ref{notsosmooth}). Let $t_{\min}>0$, and consider any real numbers $b>a$. Then
\begin{equation}
\sup_{t_{\min} \le t \le T}\|U_h(t)\|_{H^b} \le c\|h\|_{H^a},
\end{equation}
for some constant $c=c(t_{\min}, T, V, a,b, d)>0$.
\end{lemma}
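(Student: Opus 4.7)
The plan is a finite-step bootstrap in which each iteration uses Proposition \ref{fwdsmooth} to gain one unit of Sobolev regularity at the cost of shrinking the time interval by a fixed fraction of $t_{\min}$. Fix an integer $n \ge b-a$ and pick strictly increasing times
$$0 < s_0 < s_1 < \cdots < s_{n-1} \le t_{\min},~~ s_j = (j+1)t_{\min}/(n+1), ~ j=0,\dots,n-1.$$
The key observation is that restricting a weak solution $U_h$ of (\ref{PDElin}) (with $m=0$) to any sub-interval $[s_j, T]$ yields again a weak solution of (\ref{PDElin}) on $[s_j,T]$ with initial condition $U_h(s_j)$ and the same potential $V$, which still lies in $C^{1,\infty}$ since neither definition depends on the starting time.

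First I apply Proposition \ref{fwdsmooth} (inequality (\ref{bootienull})) on the interval $[0,T]$ with $t_0 = s_0$ to obtain
$$\sup_{s_0 \le t \le T}\|U_h(t)\|_{H^{a+1}} \lesssim \|h\|_{H^a}.$$
In particular $U_h(s_0) \in H^{a+1}$ with norm controlled by $\|h\|_{H^a}$. Now I restart the equation at time $s_0$ with initial condition $U_h(s_0) \in H^{a+1}$ and apply Proposition \ref{fwdsmooth} again on $[s_0,T]$ with the new threshold $t_0 = s_1$; this yields
$$\sup_{s_1 \le t \le T}\|U_h(t)\|_{H^{a+2}} \lesssim \|U_h(s_0)\|_{H^{a+1}} \lesssim \|h\|_{H^a}.$$
Iterating this procedure $n$ times, with successive starting times $s_0, s_1, \dots, s_{n-2}$ and thresholds $s_1, s_2, \dots, s_{n-1}$, and composing the constants from each step (which depend only on $s_{j+1}-s_j$, $T$, $V$ and the ambient Sobolev index, all fixed after the initial choice of $n$), I arrive at
$$\sup_{s_{n-1} \le t \le T}\|U_h(t)\|_{H^{a+n}} \lesssim \|h\|_{H^a}.$$

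Since $s_{n-1} \le t_{\min}$ and $a+n \ge b$, the continuous embedding $H^{a+n} \hookrightarrow H^b$ gives the desired bound $\sup_{t_{\min} \le t \le T}\|U_h(t)\|_{H^b} \le c\|h\|_{H^a}$. There is no real obstacle beyond keeping track of the finitely many constants produced at each step; the only point that needs a line of care is that Proposition \ref{fwdsmooth} as stated requires the potential to lie in $C^{1,\infty}$ (not merely $C^1([0,T],C^1)$), but this is our standing hypothesis and is independent of the time origin, so it carries over to each restarted problem without change.
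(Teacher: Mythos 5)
Your proof is correct and follows essentially the same strategy as the paper's: both proofs iterate the one-unit smoothing gain of Proposition \ref{fwdsmooth} (inequality (\ref{bootienull})) across a finite dissection of $(0,t_{\min}]$, restarting the linear equation at each intermediate time with the already-smoothed state as new initial condition. Your version is slightly cleaner in that it begins the first application at time $0$ rather than at an auxiliary small $\epsilon>0$ as the paper does, but this is a trivial stylistic difference.
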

\begin{proof}
Fix a small $0<\epsilon<t_{\min}$ and dissect $[\epsilon, t_{\min}]$ into at least $b-a$-many points $$\epsilon_l = \epsilon + \frac{l}{At_{\min}}(t_{\min}-\epsilon),~~ l=0,\dots, At_{\min},$$ for appropriate $A=A(a,b,t_{\min})>0$. We use (\ref{bootienull}) with $t_0 = \epsilon_l$ and for equation (\ref{PDElin}) started at time $\epsilon_{l-1}$ to obtain the chain of inequalities
\begin{equation}\label{onegain}
\|U_h(\epsilon_l)\|_{H^b} \lesssim \|U_h(\epsilon_{l-1})\|_{H^{b-1}},~~ l \ge 1,
\end{equation}
which can be iterated to obtain for any $t\ge t_{min}$ that $$\|U_h(t)\|_{H^b} \lesssim \|U_h(t_{\min})\|_{H^b} \lesssim \|U_h(\epsilon)\|_{H^a} \lesssim \|h\|_{H^a},$$ where we have also used Proposition \ref{roundrobinlin}.
\end{proof}

\subsubsection{Constant approximation of the potential}

For a potential $V \in C^1([0,T], C^b), b \ge 1$, and $\varepsilon >0$, define $$V^{(\varepsilon)}(t, \cdot) = V(0,\cdot),~\text{for}~ t \in[0,\varepsilon/2],~~ V^{(\varepsilon)}(t, \cdot) = V(t, \cdot),~\text{for}~t \ge \varepsilon,$$ and linear in between $$ V^{(\varepsilon)}(t, \cdot) =\Big(1-\frac{t-(\varepsilon/2)}{\varepsilon/2}\Big)V(0, \cdot) + \frac{t-(\varepsilon/2)}{\varepsilon/2}  V(\varepsilon, \cdot),~~~t \in [\varepsilon/2, \varepsilon].$$ This function lies again in $C^1([0,T], C^b)$, with norm bounded by at most a constant multiple of $\|V\|_{C^1([0,T], C^b)}$. In particular if $V \in C^{1,\infty}$ then $V^{(\varepsilon)}$ also lies in $C^{1,\infty}$. 

\begin{proposition}\label{cstpert}
Assume either 

\smallskip

 i) that $a=-1$ and $B \ge \|V\|_{C^1([0,T], C^{|b|})}$ for some $B>0, 0 \le |b|  \le 2$,

\smallskip

ii) or that $a \in \R$ and $V \in C^{1,\infty}$ from (\ref{notsosmooth}). 

\smallskip

Suppose $U_h, U_h^{(\varepsilon)}$ are weak solutions to (\ref{PDElin}) as in Proposition \ref{roundrobinlin} with potentials $V$ and $V^{(\varepsilon)}$ respectively, both with initial condition $h \in H^a$ and source $m=0$. Then there exists a constant $c>0$ depending on $T,B,d$ in case i) and on $a, T, V, d$ in case ii) such that for all $\varepsilon>0, 0<T_0 \le T,$ we have 
$$\int_0^{T_0} \|U_h(t) - U^{(\varepsilon)}_h(t)\|_{H^{a+1}}^2 dt \le c T_0 \varepsilon^2  \|h\|^2_{H^{b}}~~~\forall b \ge a-1.$$ 
In case ii) we further have, for some constant $c=c(a,T, V, d)>0$,
$$\int_0^{T_0} \|U_h(t) - U^{(\varepsilon)}_h(t)\|_{H^{a+1}}^2 dt \le c \varepsilon^2  \|h\|^2_{H^{a-2}}$$
\end{proposition}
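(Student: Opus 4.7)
The plan is to reduce the statement to Proposition \ref{roundrobinlin} by treating $w := U_h - U_h^{(\varepsilon)}$ as the solution of an inhomogeneous linear Schr\"odinger equation with a small forcing term coming from the discrepancy between the two potentials. Subtracting the equations for $U_h$ and $U_h^{(\varepsilon)}$ gives
\begin{equation*}
\partial_t w - \Delta w - V w = (V - V^{(\varepsilon)})\, U_h^{(\varepsilon)}, \qquad w(0) = 0,
\end{equation*}
which is of the form (\ref{PDElin}) with zero initial condition and source $m(t) := (V(t) - V^{(\varepsilon)}(t))\, U_h^{(\varepsilon)}(t)$. Proposition \ref{roundrobinlin} then directly yields
\begin{equation*}
\int_0^{T_0} \|w(t)\|_{H^{a+1}}^2\, dt \;\le\; C \int_0^{T_0} \|m(t)\|_{H^a}^2\, dt,
\end{equation*}
so everything reduces to controlling this source integral.

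The next step is to estimate $\|V(t) - V^{(\varepsilon)}(t)\|_{B^{|a|}}$ (or $B^{a}$ if $a\ge 0$). By construction $V^{(\varepsilon)}(t) = V(t)$ for $t \ge \varepsilon$, so $m$ is supported on $[0,\varepsilon]$. On $[0,\varepsilon/2]$ we have $V^{(\varepsilon)}(t) = V(0)$, and the fundamental theorem of calculus combined with the $C^1$-in-time regularity of $V$ gives $\|V(t)-V(0)\|_{C^b} \le t\,\|V\|_{C^1([0,T],C^b)} \le \varepsilon\,\|V\|_{C^1([0,T],C^b)}$; on $[\varepsilon/2,\varepsilon]$ the linear interpolate and the triangle inequality give the same bound up to a factor $2$. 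Combined with the multiplier inequality (\ref{multbasic}) (or its negative-order counterpart (\ref{multbasicn})), this yields
\begin{equation*}
\|m(t)\|_{H^a} \;\lesssim\; \varepsilon \,\|U_h^{(\varepsilon)}(t)\|_{H^a}\, \mathbf{1}_{[0,\varepsilon]}(t),
\end{equation*}
with the implicit constant depending on $\|V\|_{C^1([0,T], C^{|a|\vee a})}$, which is finite in case i) by assumption and in case ii) because $V \in C^{1,\infty}$.

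For the first stated inequality, I apply Proposition \ref{roundrobinlin} to $U_h^{(\varepsilon)}$ (noting that $V^{(\varepsilon)}$ has the same type of $C^1([0,T],C^1)$ or $C^{1,\infty}$ regularity as $V$, with a comparable norm, by inspection of its definition) to get the uniform bound $\sup_{0<t<T_0}\|U_h^{(\varepsilon)}(t)\|_{H^a} \lesssim \|h\|_{H^a}$. Inserting this into the source estimate and integrating gives
\begin{equation*}
\int_0^{T_0} \|m(t)\|_{H^a}^2\, dt \;\lesssim\; \min(T_0,\varepsilon)\,\varepsilon^2 \|h\|_{H^a}^2 \;\le\; T_0 \varepsilon^2 \|h\|_{H^a}^2,
\end{equation*}
which combined with the first display proves the first claim in both cases i) and ii).

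For the second inequality (case ii) only), I apply Proposition \ref{roundrobinlin} to $U_h^{(\varepsilon)}$ again but now with regularity parameter shifted down by one: since $h \in H^a \subset H^{a-1}$, the estimate (\ref{inhom}) with $a$ replaced by $a-1$ gives the time-integrated bound
\begin{equation*}
\int_0^{T_0} \|U_h^{(\varepsilon)}(t)\|_{H^a}^2\, dt \;\lesssim\; \|h\|_{H^{a-1}}^2.
\end{equation*}
Substituting into the source estimate yields $\int_0^{T_0}\|m(t)\|_{H^a}^2\,dt \lesssim \varepsilon^2 \|h\|_{H^{a-1}}^2$, and the second claim follows. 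The main obstacle here is not really an obstacle: the heavy lifting (existence, regularity, and the energy estimate for (\ref{PDElin})) has already been carried out in Proposition \ref{roundrobinlin}; the only care required is to check that $V^{(\varepsilon)}$ inherits the hypotheses on $V$ and that the multiplier inequality is applied with the correct endpoint norm (the $B^{|a|}$ versus $B^a$ distinction between negative and non-negative $a$).
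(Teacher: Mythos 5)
Your proof is correct and takes essentially the same route as the paper's: both treat $w = U_h - U_h^{(\varepsilon)}$ as a solution of the inhomogeneous linear Schr\"odinger equation with zero initial condition and invoke the regularity estimate (\ref{inhom}) of Proposition \ref{roundrobinlin} to reduce the claim to a source estimate that is $O(\varepsilon)$ thanks to the construction of $V^{(\varepsilon)}$. The only cosmetic differences are that you keep $V$ as the potential and put $(V-V^{(\varepsilon)})U_h^{(\varepsilon)}$ in the source (the paper uses $V^{(\varepsilon)}$ and $(V-V^{(\varepsilon)})U_h$, a symmetric choice), and you bound $\|V(t)-V^{(\varepsilon)}(t)\|$ directly via the fundamental theorem of calculus and the $C^1$-in-time bound whereas the paper writes the same difference as an integral of $V'-(V^{(\varepsilon)})'$ and applies Jensen's inequality; both yield the same bound.
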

\begin{proof}
The function $\bar v_h = U_h - U_h^{(\varepsilon)}$ solves on $(0,T] \times \Omega$ the equation
$$\Big(\frac{\partial}{\partial t} - \Delta - V^{(\varepsilon)} \Big)\bar v = (V-V^{(\varepsilon)})  U_h  \equiv m$$ with $\bar v(0,\cdot) =h-h=0$. From the regularity estimate (\ref{inhom}) we obtain
$$ \int_0^{T_0}\|\bar v(s)\|_{h^{a+1}}^2 ds \lesssim \int_0^{T_0} \|m(t)\|_{h^{a-1}}^2dt.$$
Next, since $V=V^{(\varepsilon)}$ outside of $[0,\varepsilon]$, using Proposition 7.1 in \cite{R01}, Jensen's inequality, (\ref{multbasicn}) and again (\ref{inhom}) we further bound the r.h.s.~for any $b \ge a-1$ by
\begin{align*}
& \int_0^{\min(\varepsilon, T_0)} \|(V(t)-V(0) + V^{(\varepsilon)}(0)-V^{(\varepsilon)}(t))  U_h(t)\|_{h^{b}}^2dt \\
&=\int_0^{\min(\varepsilon, T_0)} t^2\big\|\frac{1}{t}\int_0^t(V'(s) - (V^{(\varepsilon)})'(s))ds  U_h(t)\big\|_{h^{b}}^2dt \\
&\lesssim \int_0^{\min(\varepsilon, T_0)} t\int_0^t\big\|V'(s) - (V^{(\varepsilon)})'(s)\|^2_{C^{|b|}}ds  \|U_h(t)\big\|_{h^{b}}^2dt \\
& \lesssim \varepsilon^2 ~ {\rm{ess}}\sup_{0 < s < \varepsilon}(\|V'(s)\|_{C^{|b|}} + \|(V^{(\varepsilon)})'(s)\|_{C^{|b|}})\int_0^{T_0} \|U_h(t)\|^2_{h^{b}}dt \\
&\lesssim \varepsilon^2 T_0 \sup_{0 < t<T_0} \|U_h(t)\|_{h^{b}}^2 \lesssim T_0 \varepsilon^2 \|h\|_{h^{b}}^2.
\end{align*}
This proves the first inequality of the proposition. For the second we follow the same arguments with $b=a-1$ but in the last step bound $$\int_0^{T_0} \|U_h(t)\|^2_{H^{a-1}}dt \lesssim \|h\|_{H^{a-2}}^2$$ directly from (\ref{inhom}).
\end{proof}

\subsection{Spectral results for Schr\"odinger operators}\label{graphscale}

The solutions $U_h^{(\varepsilon)}$ of (\ref{PDElin}) with potential $V^{(\varepsilon)}$ constant in time on $[0,\varepsilon/2]$ from Proposition \ref{cstpert} can be studied via the theory of elliptic Schr\"odinger operators which we briefly review here for convenience: Define 
\begin{equation} \label{sop}
\mathscr S_W= \Delta - W, ~\textit{with bounded potential}~~W: \Omega \to \R,~~\|W\|_\infty \le \bar W.
\end{equation}

\begin{lemma}\label{hooray}
The kernel of $\sS_W$ given by
\begin{equation}\label{kernelW}
\mathcal K =\mathcal K_W = \{\phi \in H^1(\Omega):\Delta \phi -W\phi =0\}
\end{equation} 
is finite-dimensional.
\end{lemma}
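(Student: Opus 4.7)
The plan is to apply Riesz's theorem characterising finite-dimensional normed spaces: show that $\mathcal{K}_W$ is a closed subspace of $H^1(\Omega)$ whose closed unit ball is compact in the $H^1$-topology, by exploiting elliptic regularity to upgrade $H^1$-bounded elements of the kernel to $H^2$-bounded ones, and then invoking the Rellich--Kondrachov compact embedding $H^2(\Omega) \hookrightarrow H^1(\Omega)$ on the torus.

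First I would observe that the linear map $T_W : H^1(\Omega) \to H^{-1}(\Omega)$, defined by $T_W \phi = \Delta \phi - W\phi$, is bounded; indeed $\Delta : H^1 \to H^{-1}$ is bounded by integration by parts and $\phi \mapsto W\phi$ is bounded on $L^2 \subset H^{-1}$ since $W \in L^\infty$. Consequently $\mathcal{K}_W = \ker T_W$ is a closed subspace of $H^1(\Omega)$, and hence a Banach space in its own right when equipped with the $\|\cdot\|_{H^1}$ norm.

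Next I would promote kernel elements to $H^2$: for any $\phi \in \mathcal{K}_W$ one has $\Delta \phi = W \phi \in L^2(\Omega)$ with $\|W\phi\|_{L^2} \le \bar W \|\phi\|_{L^2}$, and then standard periodic elliptic regularity (applied, say, via the graph-norm equivalence for $(\mathrm{id} - \Delta)$ mentioned below (\ref{eftrig})) gives $\phi \in H^2(\Omega)$ with
\begin{equation*}
\|\phi\|_{H^2} \le C\bigl(\|\phi\|_{L^2} + \|\Delta\phi\|_{L^2}\bigr) \le C(1 + \bar W)\|\phi\|_{H^1}.
\end{equation*}
Therefore the closed unit ball $B = \{\phi \in \mathcal{K}_W : \|\phi\|_{H^1} \le 1\}$ is bounded in $H^2(\Omega)$.

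Finally, the Rellich--Kondrachov theorem implies that $H^2(\Omega)$ embeds compactly into $H^1(\Omega)$, so $B$ is relatively compact in $H^1$; combined with closedness of $\mathcal{K}_W$ in $H^1$, this gives that $B$ is compact in $\mathcal{K}_W$ itself. Riesz's theorem then forces $\dim \mathcal{K}_W < \infty$. I do not anticipate any real obstacle: the only point requiring mild care is making sure the elliptic regularity step uses only $W \in L^\infty$ (which is supplied), rather than smoothness of $W$, so that no additional assumption beyond $\|W\|_\infty \le \bar W$ is invoked.
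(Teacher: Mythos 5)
Your proof is correct. You reach the conclusion via Riesz's theorem (a normed space is finite-dimensional iff its closed unit ball is compact), using elliptic regularity to show that the unit ball of $\mathcal K_W$ in $H^1$ is bounded in $H^2$, and then the Rellich--Kondrachov compact embedding $H^2(\Omega)\hookrightarrow H^1(\Omega)$. The paper instead rewrites the kernel condition as $[\mathrm{Id} - A^{-1}(\mathrm{Id}-W\cdot\mathrm{Id})]\phi=0$ with $A=(\mathrm{Id}-\Delta)$, observes that $K:=A^{-1}(\mathrm{Id}-W\cdot\mathrm{Id})$ is compact on $L^2$ (compact composed with bounded), and invokes the Fredholm property of $\mathrm{Id}-K$ to deduce a finite-dimensional kernel. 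The two arguments share the same essential compactness ingredient — the smoothing effect of $\Delta^{-1}$ (equivalently $A^{-1}$), which is exactly what your $H^2$ bound plus Rellich supplies — but they package the conclusion differently: the paper outsources the final step to the Fredholm alternative, whereas you unwind precisely the part of that theorem you need and derive it from scratch via Riesz. Your route is arguably more elementary and self-contained; the paper's is slightly more compact and sets up notation (the operator $A$, compactness of $A^{-1}$) that is philosophically consistent with the surrounding spectral-theoretic development in Section \ref{graphscale}. Both are valid; there is no gap in your argument, and as you note correctly, only $W\in L^\infty$ is used.
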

\begin{proof}
Suppose $\phi$ lies in the kernel $\mathcal K$ so that $\Delta \phi = W \phi$ and define $A = (Id -\Delta) = \sum_{j=0}^\infty (1+\lambda_j) e_j \langle e_j, \cdot \rangle_{L^2}$ with $e_j, \lambda_j$ as in (\ref{weyl}), so that the inverse $A^{-1}$  is a compact linear operator on $H^{-1}(\Omega) \supset L^2(\Omega)$. Then $A\phi = (Id-\Delta) \phi = \phi - W\phi$ or equivalently 
\begin{equation}\label{fredkern}
[Id - A^{-1}(Id-W \cdot Id)]\phi =0.
\end{equation}
 But for $W \in L^\infty$ the operator $K\equiv A^{-1}(Id-W \cdot Id)$ is a composition of a compact and a continuous linear operator on $L^2$, hence itself compact, and we deduce that $Id -K$ is Fredholm (Proposition A.7.1 on p.593 in \cite{TI}) so that the kernel of $\phi$'s for which (\ref{fredkern}) holds is necessarily finite-dimensional, proving the lemma.
\end{proof}

\smallskip

Now consider a new Schr\"odinger operator $\sS_{W_+}$ with shifted potential 
\begin{equation} \label{w+}
W_+(x):= W(x) + \bar W +1,~ x \in \Omega,
\end{equation}
so that $\inf_{x \in \Omega} W_+(x) \ge 1>0$. Then we have from the divergence theorem
\begin{equation}\label{selfsp}
\langle -\sS_{W_+} u, u \rangle_{L^2} = \|\nabla u\|_{H^1}^2 + \langle W_+ u, u \rangle_{L^2} \simeq \|u\|_{H^1}^2,~~ u \in H^1(\Omega),
\end{equation}
hence by (\ref{dual}), $\|\sS_{W_+}u\|_{H^{-1}}=0$ implies $\|u\|_{H^1}=0$ and $\sS_{W_+}$ is injective from $H^1$ to $H^{-1}$. It is also surjective, for if it is not, then there exists a (nonzero) linear functional $u_0 \in (H^{-1})^* = H^1$ that is orthogonal to the range of $\sS_{W_+}$, i.e., such that $\langle \sS_{W_+} u, u_0 \rangle_{L^2}=0$ for all $u \in H^1$. But testing $u=u_0$ implies $u_0=0$, a contradiction to (\ref{selfsp}). Thus the operator $\sS_{W_+}$ has an inverse $\sS_{W_+}^{-1}$ defining a compact linear operator on $L^2 \subset H^{-1}$ that is also self-adjoint since the divergence theorem implies that $\langle \sS_{W_+} u, v \rangle_{L^2} = \langle u, \sS_{W_+} v\rangle_{L^2}$ and then
\begin{equation}\label{selfad}
\langle \sS^{-1}_{W_+} \phi, \psi \rangle_{L^2} = \langle \sS^{-1}_{W_+} \phi, \sS_{W_+} \sS_{W_+}^{-1} \psi\rangle_{L^2} = \langle \phi, \sS_{W_+}^{-1} \psi \rangle_{L^2}.
\end{equation}
The spectral theorem now implies the existence of eigen-pairs $$(e_{j,W}, \lambda_{j,W_+}) \in H^1(\Omega) \cap \R,~~j=0,1,2,\dots,$$ of $-\sS_{W_+}$ that form an orthonormal basis of the Hilbert space $L^2(\Omega)$. Taking $\|u\|_{L^2}=1$ in (\ref{selfsp}), the variational characterisation of eigenvalues (e.g., Sec.~4.5 in \cite{D95}) and (\ref{weyl}) imply that
\begin{equation} \label{specpert}
\lambda_{j,W_+} \in [\lambda_j +1 , \lambda_{j} + 2\bar W +1],~~ j \ge 0,~~ 0 \le \lambda_j \simeq j^{2/d}.
\end{equation}
Thus we arrive at the spectral formulae, for $\psi \in H^1, \phi \in L^2$, $$-\mathscr S_{W_+} \psi = \sum_j \lambda_{j,W_+} e_{j,W} \langle \psi, e_{j,W}\rangle_{L^2},~~-\mathscr S^{-1}_{W_+} \phi = \sum_j \lambda^{-1}_{j,W_+} e_{j,W} \langle \phi, e_{j,W}\rangle_{L^2}.$$ 

Now let us return to the original operator $\sS_W$ \textit{without shift}. If $\phi$ lies in the kernel $\mathcal K_W$  from (\ref{kernelW}), then $\sS_W\phi =0$ and so $\sS_{W_+} \phi = (\bar W + 1) \phi$, hence if $e_{0,k}, k =1, \dots, dim(\mathcal K_W),$ is any $L^2(\Omega)$-orthonormal basis of $\mathcal K_W$, then these are eigenfunctions of $\sS_{W_+}$ for the eigenvalue $\lambda_{W_+} = \bar W +1$. Generally, any eigenfunction $e_{j,W}$ satisfies $$(\sS_W - \bar W -1) e_{j,W} = \lambda_{j,W_+} e_{j,W},~~ j \ge 0,$$ and hence
the operator $-\sS_W$ has the same eigenfunctions $e_{j,W}$ but for eigenvalues
\begin{equation} \label{tildeev}
 \lambda_{j,W} = \lambda_{j,W_+} - \bar W -1 \in [\lambda_j -\bar W, \lambda_j+ \bar W].
\end{equation}
If $\psi \in H^1, \phi \in L^2$ also lie in the orthogonal complement $L^2_{\mathcal K^\perp} \equiv L^2(\Omega) \ominus \mathcal K_W,$ we obtain $$\mathscr S_{W} \psi =- \sum_{j \ge 1} \lambda_{j,W} e_{j,W} \langle \psi, e_{j,W}\rangle_{L^2}, ~~\mathscr S^{-1}_{W} \phi = -\sum_{j \ge 1} \lambda^{-1}_{j,W} e_{j,W} \langle \phi, e_{j,W}\rangle_{L^2}.$$ In particular we can represent periodic weak solutions (as in (\ref{weakpdelin})) $w=w_h =w_{h,W}$ to the linear parabolic PDE with Schr\"odinger operator $\sS_W = \Delta - W$, 
\begin{align}\label{PDElinW}
\frac{\partial}{\partial t} w(t,\cdot) - \sS_W w(t, \cdot) &=0~~\text{on } (0,T_0] \times \Omega, \notag \\
w(0, \cdot) &= h~~\text{on } \Omega
\end{align}
for any $0 < T_0 \le T$ and $h \in H^1$  by the formula
\begin{equation}\label{ubahr}
w_{W,h}(t) = \sum_j e^{-t \lambda_{j,W}} e_{j,W} \langle e_{j,W}, h\rangle_{L^2} = \int_\Omega p_{t, W}(x,y)h(y)dy,~0 <t \le T_0,
\end{equation}
with symmetric Green kernel 
\begin{equation} \label{green}
p_{t,W}(x,y)=p_{t,W}(y,x)= \sum_j e^{-t \lambda_{j,W}} e_{j,W}(x) e_{j,W}(y).
\end{equation}
In the above, the sum extends also over the basis functions $e_{j, W} \equiv e_{0,k} $ of the finite-dimensional linear subspace $\mathcal K_W$ of $H^1$, with eigenvalues $\lambda_{j,W}=0$, where the semigroup acts just as the identity operation. 

\subsubsection{An auxiliary Sobolev scale}

In the proof of the key stability estimate Lemma \ref{paristexas} to follow, we exploit the structure of function spaces defines spectrally from $\sS_{W_+}$ as follows: 
\begin{equation}\label{tildeh}
\tilde H^a_W(\Omega)  \equiv \left\{h: \sum_{j \ge 0} \lambda^a_{j, W_+} \langle h, e_{j, W} \rangle_{L^2}^2 \equiv \|h\|_{\tilde H^a_W}^2 <\infty \right\},~a \in \R.
\end{equation}
By Parseval's identity we have the isometry $\tilde H^0_W=L^2$ for all $W$. Moreover for $u \in H^1$, and with constants implicit in $\simeq$ depending only on $\bar W$,
\begin{align}\label{w1est}
\|u\|^2_{H^1} &\simeq \|u\|^2_{L^2} + \|\nabla u\|_{L^2}^2 \simeq \|(W_+)^{1/2} u\|_{L^2}^2 - \langle \Delta u, u \rangle_{L^2} \notag \\
&= -\langle \Delta u - W_+ u, u \rangle_{L^2}   =\sum_j \lambda_{j, W_+} \langle u, e_{j,W} \rangle_{L^2}^2 \simeq \|u\|^2_{\tilde H^1_W},
\end{align}
so that $H^1=\tilde H^1_W$ with equivalent norms. By duality one shows further that $\tilde H_W^{-1} =H^{-1}$ with equivalent norms -- below we shall only need
\begin{align} \label{normeq}
\|u\|_{H^{-1}}&=\sup_{\|\phi\|_{H^1} \le 1}\Big|\int_\Omega \phi u \Big|  =\sup_{\|\phi\|_{H^1} \le 1}\Big|\sum_j \lambda^{1/2}_{j,W_+} \langle \phi, e_{j,W} \rangle_{L^2} \langle e_{j,W}, u \rangle_{L^2} \lambda_{j,W_+}^{-1/2} \Big|  \lesssim  \|u\|_{\tilde H^{-1}_W}
\end{align}
for all $u \in H^1$, which follows from (\ref{dual}), Parseval's identity, the Cauchy-Schwarz inequality and (\ref{w1est}). 

The final facts we need below are the following: We have $$u \in \tilde H^2_W \iff \Delta u - W_+ u \in L^2 \iff u \in H^2$$ 
and the norms are equivalent: On the one hand
$$\|u\|_{\tilde H_W^2} = \|(\Delta - W_+) u\|_{L^2} \lesssim \|\Delta u\|_{L^2} + \|u\|_{L^2} \simeq \|u\|_{H^2}$$ and conversely
$$\|u\|_{H^2} \simeq \|\Delta u\|_{L^2} + \|u\|_{L^2} \lesssim \|(\Delta - W_+)u\|_{L^2} + \|u\|_{L^2} \lesssim \|u\|_{\tilde H^2_W}.$$ In particular for $h \in H^2$ so that $\sS_{W_+} h \in L^2$, the series $\sum_{j} e_{j,W} \langle h, e_{j,W}\rangle_{L^2}$ converges in $\tilde H_W^2$ and for $d \le 3$ also uniformly on $\Omega$, since its partial sums $h_J$ satisfy, by the Sobolev imbedding, as $J \to \infty$
\begin{align} \label{uniflim}
\|h-h_J\|^2_\infty &\lesssim \|h-h_J\|^2_{H^2} \simeq \|h-h_J\|^2_{\tilde H^2_W} \notag \\
&= \sum_{j>J} \lambda^2_{J, W_+} \langle h, e_{j,W}\rangle_{L^2}^2 = \sum_{j>J} \langle \sS_{W_+}h, e_{j,W}\rangle_{L^2}^2 \to 0.
\end{align}

\subsubsection{Lipschitz stability of the integrated parabolic flow}

The following lemma exploits the preceding results and is at the heart of the key injectivity results for $\G, \mathbb I_\theta$ relevant in Condition \ref{gemol}.

\begin{lemma}\label{paristexas}
Let $U_h(t) \in L^2([0,T], L^2(\Omega))$ be a solution of (\ref{PDElin}) for source $m=0$, initial condition $h \in H^1$, and potential $V$ such that $M  \ge \|V\|_{C^1([0,T], C^1)}$. For every $T>0$ the exists a constant $c=c(M, T,d)$ such that
\begin{equation}
\int_0^T\|U_h(s)\|^2_{L^2(\Omega)}ds \ge c\|h\|_{H^{-1}}^2,~~ \forall h \in  H^1.
\end{equation}
\end{lemma}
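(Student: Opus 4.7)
The plan is to freeze the potential at $t=0$, reducing the problem on a small initial time window to a time-independent Schr\"odinger semigroup whose spectral decomposition is supplied by Section~\ref{graphscale}, and then to extract a lower bound by a direct eigenvalue calculation. The approximation error incurred by freezing the potential is exactly what Proposition~\ref{cstpert} controls.

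First I would introduce a small parameter $\varepsilon \in (0,T]$ (to be chosen at the end) and the time-regularised potential $V^{(\varepsilon)}$ from before Proposition~\ref{cstpert}, with corresponding solution $U^{(\varepsilon)}_h$. Case i) of that proposition with $a=-1$ applies since $\|V^{(\varepsilon)}\|_{C^1([0,T],C^1)} \lesssim M$, and gives
$$\int_0^T \|U_h(s) - U^{(\varepsilon)}_h(s)\|_{L^2}^2\,ds \le c T \varepsilon^2 \|h\|_{H^{-1}}^2.$$
On $[0,\varepsilon/2]$ the potential $V^{(\varepsilon)}(t,\cdot)$ equals the frozen $-W$ with $W := -V(0,\cdot)$ and $\bar W := \|W\|_\infty \le M$, so that $U_h^{(\varepsilon)}$ coincides with the semigroup action of $\sS_W$ on $h$, and the representation (\ref{ubahr}) together with Parseval yield
$$\int_0^{\varepsilon/2} \|U^{(\varepsilon)}_h(s)\|_{L^2}^2\,ds = \sum_j \alpha_j(\varepsilon)\,|\langle h, e_{j,W}\rangle_{L^2}|^2, \qquad \alpha_j(\varepsilon) := \int_0^{\varepsilon/2} e^{-2s\lambda_{j,W}}\,ds.$$

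The heart of the argument is a uniform spectral lower bound
$$\alpha_j(\varepsilon) \;\ge\; \frac{c_0(\varepsilon, M)}{\lambda_{j,W_+}}, \qquad \forall j \ge 0,$$
where $\lambda_{j,W_+} = \lambda_{j,W} + \bar W + 1 \ge 1$ by (\ref{specpert}) and (\ref{tildeev}). A three-way case split does it: when $\lambda_{j,W} \le 0$ (which absorbs the finite-dimensional kernel from Lemma~\ref{hooray}) one has $\lambda_{j,W_+} \le \bar W+1$ and $\alpha_j(\varepsilon) \ge \varepsilon/2$; when $0<\varepsilon\lambda_{j,W}\le 1$, the elementary bound $1-e^{-x}\ge x/2$ on $[0,1]$ gives $\alpha_j(\varepsilon) \ge \varepsilon/4$; when $\varepsilon\lambda_{j,W}>1$, one has $\alpha_j(\varepsilon) \ge (1-e^{-1})/(2\lambda_{j,W})$ and $\lambda_{j,W_+}/\lambda_{j,W} \ge 1$. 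In each regime the claim follows with $c_0 := \min(\varepsilon/4, (1-e^{-1})/2)$, using $\lambda_{j,W_+}\ge 1$ in the first two. Summing and applying the norm comparison $\|h\|_{\tilde H^{-1}_W}^2 \gtrsim \|h\|_{H^{-1}}^2$ from (\ref{normeq}) delivers
$$\int_0^{\varepsilon/2} \|U_h^{(\varepsilon)}(s)\|_{L^2}^2\,ds \;\ge\; c_1 \|h\|_{H^{-1}}^2, \qquad c_1 = c_1(\varepsilon, M, d) > 0.$$

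To finish I would combine the two estimates through $\|a\|^2 \ge \tfrac12 \|b\|^2 - \|a-b\|^2$ with $a=U_h(s)$, $b = U_h^{(\varepsilon)}(s)$, integrated on $[0,\varepsilon/2]$, yielding
$$\int_0^T \|U_h(s)\|_{L^2}^2\,ds \;\ge\; \int_0^{\varepsilon/2} \|U_h(s)\|_{L^2}^2\,ds \;\ge\; \tfrac{c_1}{2} \|h\|_{H^{-1}}^2 - cT\varepsilon^2 \|h\|_{H^{-1}}^2,$$
and then choose $\varepsilon$ small enough that $cT\varepsilon^2 \le c_1/4$ to absorb the error; this fixes $\varepsilon$ as a function of $T,M,d$ only, and produces the claimed inequality. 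The delicate step is the uniform spectral bound: the possibly negative eigenvalues $\lambda_{j,W}$ generate growing exponentials that are only kept in check by the lower bound $\lambda_{j,W}\ge -\bar W$ from (\ref{tildeev}), and the shift to $\lambda_{j,W_+}$ is precisely what makes the $1/\lambda_{j,W_+}$ scaling reproduce the $\tilde H^{-1}_W$ (hence by (\ref{normeq}) the $H^{-1}$) norm of $h$.
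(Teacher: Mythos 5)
Your proposal is correct and follows essentially the same route as the paper: freeze the potential to $V^{(\varepsilon)}$, use the spectral representation (\ref{ubahr}) of the resulting time-independent Schr\"odinger semigroup to extract a lower bound proportional to $\|h\|^2_{\tilde H^{-1}_W}$ (hence to $\|h\|^2_{H^{-1}}$ by (\ref{normeq})), and absorb the error from Proposition~\ref{cstpert} by taking $\varepsilon$ small. The only differences are cosmetic (a three-way rather than two-way eigenvalue split, and your correct quadratic triangle inequality $\|a\|^2 \ge \tfrac12\|b\|^2-\|a-b\|^2$ where the paper's displayed version omits the factor $\tfrac12$, which your version fixes cleanly).
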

\begin{proof} 
The solution $U_h^{(\varepsilon)}$ for $h \in H^1$ and bounded potential $V^{(\varepsilon)}$ can be represented via (\ref{ubahr}) on $(0,\varepsilon/2]$ since $V^{(\varepsilon)}= V(0, \cdot) \equiv -W$ is time-independent there. From Proposition \ref{cstpert} with $a= b=-1$ and with $T_0=\varepsilon/2$ we deduce, using also Parsevals' identity 
\begin{align*}
\int_0^T\|U_h(s)\|^2_{L^2}ds & \ge \int_0^{\varepsilon/2}\|U_h(s)\|^2_{L^2}ds \\
&\ge \int_0^{\varepsilon/2} \|U^{(\varepsilon)}_h(s)\|^2_{L^2}ds - \int_0^{\varepsilon/2} \|U^{(\varepsilon)}_h(s) -U_h(s)\|^2_{L^2}ds  \\
&\ge  \int_0^{\varepsilon/2} \sum_j e^{-2s \lambda_{j,W}} \langle e_{j, W}, h\rangle^2_{L^2}ds - \frac{c_M}{4} \varepsilon^3 \|h\|_{H^{-1}}^2,
\end{align*}
For eigenvalues $\lambda_{j,W} \le 1$ and recalling $\bar W + 1 + \lambda_{j,W}>1$  in view of (\ref{specpert}), (\ref{tildeev}) we see $$\int_0^{\varepsilon/2}  e^{-2s \lambda_{j, W}}ds \ge \int_0^{\varepsilon/2} e^{-2 s}ds = \frac{1}{2} (1 - e^{-\varepsilon})  \ge \frac{1}{2}\frac{ 1 - e^{-\varepsilon}}{\bar W + \lambda_{j,W}+1}.$$ 
For large eigenvalues  $\lambda_{j,W}>1$ we have the estimate
$$\int_0^{\varepsilon/2} e^{-2s \lambda_{j,W}}ds = \frac{1}{2\lambda_{j,W}}(1- e^{-\varepsilon \lambda_{j,W}}) \ge \frac{1}{2}\frac{1- e^{-\varepsilon }}{\bar W + \lambda_{j,W}+1}.$$ Combining these and integrating term-wise, we obtain the bound
\begin{equation}
\sum_j \int_0^{\varepsilon/2}e^{-2s \lambda_{j,W}}  \langle e_{j,W}, h\rangle^2_{L^2}ds \ge \frac{1}{2}(1 - e^{-\varepsilon})\|h\|_{\tilde H_W^{-1}}^2 \ge \frac{\bar b}{2}(1 - e^{-\varepsilon})\|h\|_{H^{-1}}^2,
\end{equation}
for constant $\bar b$ from (\ref{normeq}) depending only on $M$. Then choosing $\varepsilon$ small enough s.t. $$\frac{\bar b}{2}(1 - e^{-\varepsilon}) - \frac{c_M}{4} \varepsilon^3 >c''>0,$$ possible since $(1-e^{-\varepsilon})/\varepsilon \to 1$ as $\varepsilon \to 0$, we obtain $$\int_0^{T}\|U_h(s)\|^2_{L^2} \ge  c'' \|h\|_{H^{-1}}^2,$$ completing the proof of part a). \end{proof}

\subsection{The information operator and its inverse}

In this subsection, the parameter $\theta_0\in C^\infty$ is a ground truth initial condition for the reaction diffusion equation considered in Theorem \ref{ganzwien}. As $\theta_0$ will be fixed throughout and no other values of $\theta$ will be considered, we will write $\theta =\theta_0$ to ease notation. We also set the time horizon $T=1$ for notational simplicity so that the inner products of $L^2(\mathcal X)$ and $L^2([0,T], \Omega)$ coincide. Then by Theorem \ref{linrob} and Proposition \ref{roundrobinlin} $$\mathbb I_\theta h = D\G_\theta[h] = U_{h}(t): L^2(\Omega) \to L^2(\mathcal X),~~L^2(\mathcal X)=L^2([0,1], L^2(\Omega)),$$ is the continuous linear operator solving the PDE (\ref{weakpdelin}) with $m=0$, initial condition $h$, and potential $V=f'(u_{\theta}(t, \cdot))$ which, in view of Proposition \ref{roundrobin}, lies in $C^{1,\infty}$ from (\ref{notsosmooth}). In particular $\mathbb I_{\theta}$ has a continuous and linear adjoint operator $\mathbb I_\theta^*: L^2(\mathcal X) \to L^2(\Omega)$ such that for all $G \in L^2(\mathcal X), h \in L^2(\Omega)$, we have $\langle \mathbb I_\theta h, G \rangle_{L^2(\mathcal X)} = \langle h, \mathbb I_\theta^*G \rangle_{L^2(\Omega)}.$ The (Fisher-) information operator $\mathbb I_\theta^* \mathbb I_\theta$ is then a bounded linear operator acting on $L^2(\Omega)$. To study its mapping properties let us first prove the following result:
\begin{lemma}\label{injep0}
Let $\eta \ge 0$. The linear operator
\begin{equation}\label{deltafish}
\mathcal I = \Delta \mathbb I_\theta^* \mathbb I_\theta
\end{equation}
maps $H^\eta$ continuously into $H^\eta_0$, and hence $H^\eta_0$ into itself. Moreover $\mathcal I$ is injective on $H^1_0$ and hence also on its subspaces $H^\eta_0$ for any $\eta \ge 1$.
\end{lemma}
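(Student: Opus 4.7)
My plan is in three moves: represent $\mathbb I_\theta^*$ via a backward parabolic PDE, establish a two-order smoothing estimate for $\mathbb I_\theta^*\mathbb I_\theta$, and deduce injectivity by a pairing argument combined with Lemma \ref{paristexas}. For $G \in L^2(\mathcal X)$, let $P_G$ denote the unique weak solution of the backward time-dependent Schr\"odinger problem $-P_t - \Delta P - V P = G$ on $(0,T]\times\Omega$ with terminal data $P(T,\cdot) = 0$ and potential $V = f'(u_{\theta_0}) \in C^{1,\infty}$ (Proposition \ref{roundrobin}). Integration by parts in time against $U_h = \mathbb I_\theta h$, combined with the forward PDE (\ref{PDElin}), gives the identity $\langle \mathbb I_\theta h, G\rangle_{L^2(\mathcal X)} = \langle h, P_G(0)\rangle_{L^2(\Omega)}$, so that $\mathbb I_\theta^* G = P_G(0)$ and in particular $\mathbb I_\theta^*\mathbb I_\theta h = P_h(0)$, where $P_h$ solves the backward equation with source $U_h$. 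Because $\int_\Omega \Delta \phi = 0$ for every periodic $\phi \in H^2$, once I prove $\|P_h(0)\|_{H^{\eta+2}} \lesssim \|h\|_{H^\eta}$, the continuous mapping $\mathcal I = \Delta \circ \mathbb I_\theta^*\mathbb I_\theta: H^\eta \to H^\eta_0$ (and hence $H^\eta_0 \to H^\eta_0$) will follow immediately.

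To prove the smoothing estimate I plan to time-reverse: letting $Q(s) = P_h(T-s)$ turns the backward equation into a forward Schr\"odinger equation $Q_s - \Delta Q - V(T-s) Q = U_h(T-s)$ with zero initial data and $P_h(0) = Q(T)$. Proposition \ref{roundrobinlin} applied to the forward equation for $U_h$ at level $\eta$ gives $\int_0^T \|U_h(s)\|^2_{H^{\eta+1}}\,ds \lesssim \|h\|^2_{H^\eta}$. Testing the PDE for $Q$ against $Q$ in the inner product $\langle\cdot,\cdot\rangle_{h^{\eta+2}}$ and applying (\ref{ipart}) produces
\begin{equation*}
\tfrac{1}{2}\tfrac{d}{ds}\|Q\|^2_{h^{\eta+2}} + \|\nabla Q\|^2_{h^{\eta+2}} = \langle V(T-s) Q, Q\rangle_{h^{\eta+2}} + \langle U_h(T-s), Q\rangle_{h^{\eta+2}}.
\end{equation*}
The key step is a spectral Cauchy-Schwarz split of the source pairing, $|\langle U_h(T-s), Q\rangle_{h^{\eta+2}}| \le \|U_h(T-s)\|_{h^{\eta+1}}\,\|Q\|_{h^{\eta+3}}$, together with the elementary comparison $\|Q\|^2_{h^{\eta+3}} \lesssim \|Q\|^2_{L^2} + \|\nabla Q\|^2_{h^{\eta+2}}$ (since $\lambda_j(1+\lambda_j)^{\eta+2} \gtrsim (1+\lambda_j)^{\eta+3}$ for $j\ge 1$); Young's inequality then absorbs the $\|\nabla Q\|^2_{h^{\eta+2}}$ contribution into the dissipative left-hand side, while the multiplier bound (\ref{multbasic}) and smoothness of $V$ control the $VQ$-term by $\|Q\|^2_{h^{\eta+2}}$. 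Gronwall applied to the resulting inequality $\tfrac{d}{ds}\|Q\|^2_{h^{\eta+2}} \lesssim \|Q\|^2_{h^{\eta+2}} + \|U_h(T-s)\|^2_{h^{\eta+1}}$ with $Q(0)=0$ delivers $\|P_h(0)\|^2_{H^{\eta+2}} = \|Q(T)\|^2_{h^{\eta+2}} \lesssim \|h\|^2_{H^\eta}$, the required smoothing. A standard Galerkin approximation, as used throughout Section \ref{pdean}, justifies the formal energy manipulations.

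For injectivity on $H^1_0$ I assume $h \in H^1_0$ with $\mathcal I h = 0$. Then $\Delta(\mathbb I_\theta^* \mathbb I_\theta h) = 0$, which (e.g.\ by matching Fourier coefficients, as already invoked around (\ref{keyfishinv})) forces $\mathbb I_\theta^* \mathbb I_\theta h = c_h$ to be a constant on $\Omega$. Pairing with $h$ in $L^2$ and using $\langle h, 1\rangle_{L^2} = 0$ yields
\begin{equation*}
\|U_h\|^2_{L^2(\mathcal X)} = \langle \mathbb I_\theta h, \mathbb I_\theta h\rangle_{L^2(\mathcal X)} = \langle h, \mathbb I_\theta^* \mathbb I_\theta h\rangle_{L^2(\Omega)} = c_h\,\langle h, 1\rangle_{L^2} = 0.
\end{equation*}
The uniform $C^1([0,T],C^1)$-bound on $V = f'(u_{\theta_0})$ furnished by Proposition \ref{roundrobin} allows me to invoke Lemma \ref{paristexas}, which gives $\|h\|^2_{H^{-1}} \lesssim \|U_h\|^2_{L^2(\mathcal X)} = 0$, hence $h = 0$. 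Restriction to $H^\eta_0 \subset H^1_0$ yields injectivity for all $\eta \ge 1$.

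The main technical obstacle is the two-order gain in the smoothing estimate. Applying Proposition \ref{roundrobinlin} directly to $Q$ at the largest usable level $a = \eta+1$ only produces $\sup_s \|Q(s)\|_{H^{\eta+1}} \lesssim \|h\|_{H^\eta}$, which is one Sobolev order short of what $\Delta$ demands on the output side, and pushing to $a = \eta+2$ would require $U_h \in L^2([0,T], H^{\eta+2})$ and hence $h \in H^{\eta+1}$. The spectral Cauchy-Schwarz split above is precisely what recovers the missing order by trading a derivative from the source against the dissipative $L^2$-in-time control of $\nabla Q$; this is the continuous-coefficient analogue of the explicit identity $\mathbb I_\theta^*\mathbb I_\theta = \int_0^T e^{2s\Delta}\,ds$ acting as an $(-\Delta)^{-1}$-type operator on the zero-mean subspace in the constant-coefficient toy case.
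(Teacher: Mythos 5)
Your proof is correct, and your injectivity argument is essentially the same as the paper's (forcing $\mathbb I_\theta^*\mathbb I_\theta h$ to be constant, pairing with $h\in L^2_0$, then invoking Lemma~\ref{paristexas}). The continuity claim, however, is argued along a genuinely different route. The paper never constructs $\mathbb I_\theta^*$ explicitly: it bounds $\|\mathcal I\phi\|_{H^\eta}$ by duality against smooth $\psi$ with $\|\psi\|_{H^{-\eta}}\le 1$, rewrites the pairing as $\int_0^T\langle \mathbb I_\theta\phi,\mathbb I_\theta\Delta\psi\rangle_{L^2(\Omega)}\,dt$, splits it as $H^{\eta+1}$ against $H^{-\eta-1}$, and invokes Proposition~\ref{roundrobinlin} twice, at $a=\eta$ and at $a=-\eta-2$, so that each factor gains one Sobolev order. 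You instead identify $\mathbb I_\theta^*G = P_G(0)$ as a backward Schr\"odinger solution, time-reverse to a forward problem with zero initial data and source $U_h(T-\cdot)$, and prove a direct two-order gain $\|P_h(0)\|_{H^{\eta+2}}\lesssim\|h\|_{H^\eta}$ by a refined energy estimate in which the $H^{\eta+1}$--$H^{\eta+3}$ Cauchy--Schwarz split of the source pairing trades a derivative against the parabolic dissipation; this is not covered by a single application of Proposition~\ref{roundrobinlin} and required the additional spectral comparison $\|Q\|^2_{h^{\eta+3}}\lesssim\|Q\|^2_{L^2}+\|\nabla Q\|^2_{h^{\eta+2}}$, which you verify correctly. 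The paper's route is shorter and reuses stated estimates as black boxes; yours is more constructive, making the adjoint and its smoothing explicit, which is useful intuition but costs an extra (correct) energy argument that the paper's abstract duality avoids. Both correctly deduce that $\mathcal I$ lands in $H^\eta_0$ from $\int_\Omega\Delta\phi=0$.
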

\begin{proof}
Let us first prove the continuity statement: We have from the definition of the $h^\eta$ norms, (\ref{dual}), the Cauchy-Schwarz inequality and Proposition \ref{roundrobinlin} with $m=0$, $a=\eta$ and $a=-\eta-2$, for any $\phi \in H^\eta$,
\begin{align*}
\|\mathcal I \phi\|^2_{h^\eta} &\lesssim \sup_{\psi \in C^\infty: \|\psi\|_{h^{-\eta}} \le 1}|\langle \Delta \mathbb I_\theta^* \mathbb I_\theta \phi, \psi \rangle_{L^2}|^2 = \sup_{\psi \in C^\infty: \|\psi\|_{h^{-\eta}} \le 1}|\langle \mathbb I_\theta^* \mathbb I_\theta \phi, \Delta \psi \rangle_{L^2}|^2 \\
& = \sup_{\psi \in C^\infty: \|\psi\|_{h^{-\eta}} \le 1}\Big|\int_0^1 \langle \mathbb I_\theta \phi, \mathbb I_\theta \Delta \psi \rangle_{L^2(\Omega)}\Big|^2  \\
&  \lesssim  \sup_{\psi \in C^\infty: \|\psi\|_{h^{-\eta}} \le 1} \int_0^1 \|\mathbb I_\theta \phi\|^2_{H^{\eta+1}}dt \int_0^1 \|\mathbb I_\theta \Delta \psi\|_{H^{-\eta-1}}^2 dt \\
& \lesssim \sup_{\psi \in C^\infty: \|\psi\|_{h^{-\eta}} \le 1} \|\phi\|^2_{H^{\eta}} \|\Delta \psi\|_{H^{-\eta-2}}^2  \lesssim \|\phi\|^2_{H^\eta}.
\end{align*}
It is also clear from the divergence theorem (e.g., (\ref{ipart0})) that $\mathcal I$ maps into $H^\eta_0$.

Now to prove that the linear operator $\mathcal I$ is injective, let $\psi \in H^1_0$. If $\mathcal I \psi =  \Delta \mathbb I_\theta ^* \mathbb I_\theta \psi =0$ then $\mathbb I_\theta ^* \mathbb I_\theta \psi$ is constant a.e.~on $\Omega$ by the maximum principle for $\Delta$ (or by (\ref{seqnorm}), (\ref{weyl})). But then $$\int_0^1\|U_\psi(s)\|^2_{L^2(\Omega)}ds =  \|\mathbb I_\theta\psi \|^2_{L^2(\mathcal X)} = \langle \psi, \mathbb I_\theta^*\mathbb I_\theta  \psi \rangle_{L^2(\Omega)} = const \times \int_\Omega \psi =0$$ and this implies $\|\psi\|_{H^{-1}}=0$ in view of Lemma \ref{paristexas}, and then also $\psi =0$ almost everywhere and hence $\psi = 0$ in $H^1$.
\end{proof}

The following key result of this article verifies Condition \ref{gemol}F) for the reaction-diffusion system (\ref{evol}).

\begin{theorem}\label{raf}
Let $\eta> 2+d/2$. Then the operator $\mathcal I$ from (\ref{deltafish}) defines a continuous linear homeomorphism of $H^\eta_0(\Omega)$ onto itself. 
\end{theorem}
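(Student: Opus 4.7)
From Lemma~\ref{injep0} we already have continuity of $\mathcal{I}:H^\eta_0\to H^\eta_0$ and injectivity on that subspace (since $\eta>2+d/2\geq 1$), so only surjectivity remains; continuity of $\mathcal{I}^{-1}$ will then follow from the open mapping theorem. The plan is to realise $\mathcal{I}$ as a compact perturbation of an explicit reference isomorphism on $H^\eta_0$ and to conclude by the Fredholm alternative, with the injectivity from Lemma~\ref{injep0} supplying the triviality of the kernel.

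For the reference operator I use the ``free-flow'' analogue obtained by switching off the potential $V$: set $\mathbb{J}h=(e^{t\Delta}h)_{0\le t\le T}$ and $\mathcal{I}_\ast=\Delta\mathbb{J}^\ast\mathbb{J}$. Since $\mathbb{J}^\ast\mathbb{J}=\int_0^T e^{2t\Delta}\,dt$, a short spectral computation in the basis $(e_j)$ of $-\Delta$ gives $\mathcal{I}_\ast=\tfrac12(e^{2T\Delta}-I)$, acting on $e_j$ as multiplication by $\tfrac12(e^{-2T\lambda_j}-1)$. By the Poincar\'e inequality $\lambda_1>0$, these eigenvalues are uniformly bounded and uniformly bounded away from zero on $H^\eta_0$, so $\mathcal{I}_\ast:H^\eta_0\to H^\eta_0$ is a self-adjoint Banach space isomorphism.

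The heart of the argument is to show that $\mathcal{I}-\mathcal{I}_\ast$ maps $H^\eta_0$ into $H^{\eta+\delta}_0$ for some $\delta>0$, which by Rellich's theorem upgrades to compactness as a map $H^\eta_0\to H^\eta_0$. Mimicking the duality template of Lemma~\ref{injep0}, I rewrite
\[
\langle(\mathcal{I}-\mathcal{I}_\ast)\phi,\psi\rangle_{L^2}=\langle\mathbb{I}_\theta\phi,\mathbb{I}_\theta\Delta\psi\rangle_{L^2(\mathcal{X})}-\langle\mathbb{J}\phi,\mathbb{J}\Delta\psi\rangle_{L^2(\mathcal{X})}
\]
and telescope, producing two terms each of which contains the Duhamel correction $\mathbb{I}_\theta-\mathbb{J}=D\,V\mathbb{I}_\theta$ as an explicit factor (with $D$ denoting convolution against $e^{(t-s)\Delta}$ and $V(t,\cdot)=f'(u_{\theta_0}(t,\cdot))\in C^{1,\infty}$ by Proposition~\ref{roundrobin}). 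Each of the two inner products is then bounded by Cauchy--Schwarz in $L^2(\mathcal{X})$ together with spatial Sobolev duality, using Proposition~\ref{roundrobinlin} both to control the $L^2([0,T],H^{a+1})$ regularity of $\mathbb{I}_\theta$ and $\mathbb{J}$ and to harvest the extra one-derivative smoothing from $D$, and the multiplier inequalities (\ref{multbasic})--(\ref{multbasicn}) to pass the smooth factor $V$ through positive and negative Sobolev scales. The Sobolev exponents balance with up to one derivative of slack, so any $\delta\in(0,1)$, for instance $\delta=1/2$, suffices.

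Once this compactness is in hand I factor $\mathcal{I}=\mathcal{I}_\ast(I+K)$ with $K=\mathcal{I}_\ast^{-1}(\mathcal{I}-\mathcal{I}_\ast)$ compact on $H^\eta_0$, and the Fredholm alternative reduces invertibility of $I+K$ to injectivity: any $\phi$ with $(I+K)\phi=0$ satisfies $\mathcal{I}\phi=0$ and hence $\phi=0$ by Lemma~\ref{injep0}. Therefore $\mathcal{I}$ is a bijection of $H^\eta_0$ and the open mapping theorem provides the continuous inverse. The technical pinch point is in the heart of the argument above: one must ensure that the bare-heat Duhamel operator $D$ genuinely gains a full space derivative on top of the energy regularity of $\mathbb{I}_\theta$, and then keep careful track of the Sobolev exponents on both sides of the duality pairing so as to extract a strictly positive smoothing gain $\delta>0$.
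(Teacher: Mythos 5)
Your proof is correct and takes a genuinely different route from the paper's. Both arguments are Fredholm reductions to the injectivity of Lemma~\ref{injep0}, but the reference isomorphism differs. The paper approximates the potential $V=f'(u_{\theta_0})$ by the locally-constant-in-time $V^{(\varepsilon)}$ of Proposition~\ref{cstpert}, passing to $\mathcal I_\varepsilon$ via Lemma~\ref{epsapp}; the contribution from $[\varepsilon/4,T]$ is compact by forward smoothing, while on $[0,\varepsilon/4]$ the flow is an honest Schr\"odinger semigroup $e^{t\mathscr S_W}$ and the fundamental theorem of calculus applied to $\Delta U_h^{(\varepsilon)}(2t) = (\partial_t + W)U_h^{(\varepsilon)}(2t)$ peels off a nonzero multiple of the identity plus compact remainders. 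You instead take the free heat flow $\mathbb J = e^{t\Delta}$ as reference, for which $\mathcal I_\ast = \Delta\mathbb J^*\mathbb J = \tfrac12(e^{2T\Delta}-Id)$ is an explicit Fourier multiplier whose eigenvalues are bounded and bounded away from zero on $H^\eta_0$ by Poincar\'e, hence an isomorphism. The pinch point you flag does close cleanly: for $\phi \in H^a$, Proposition~\ref{roundrobinlin} gives $\mathbb I_{\theta_0}\phi \in L^2([0,T],H^{a+1})$, hence $V\mathbb I_{\theta_0}\phi \in L^2([0,T],H^{a+1})$ since $V\in C^{1,\infty}$, and applying the same proposition to the inhomogeneous heat equation with zero initial data shows $(\mathbb I_{\theta_0}-\mathbb J)\phi \in L^2([0,T],H^{a+2})$, a full derivative better than $\mathbb I_{\theta_0}\phi$; the telescoped duality pairing then gives $\mathcal I - \mathcal I_\ast: H^\eta_0 \to H^{\eta+1}_0$ bounded (so in fact $\delta=1$, not just $\delta=1/2$), and Rellich yields compactness. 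Your route is shorter and more elementary; what the paper's route buys is the structural content of Remark~\ref{clever}, where the small-$\varepsilon$ constant-potential approximation ties the information operator to the spectrum and finite-dimensional kernel of the time-zero Schr\"odinger operator $\Delta + f'(\theta_0)$, which is the author's explanation for why the strong-topology Bernstein--von Mises theorem in $\mathscr C$ is possible at all.
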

\begin{proof}
We will prove that $-\mathcal I$ equals half the identity $Id/2$ plus a compact operator $K$ mapping $H^\eta_0$ into $H^{\eta+1}_0$ for $\eta>2+d/2$. Since $\mathcal I$ is injective by Lemma \ref{injep0}, the Fredholm alternative, p.583 in \cite{TI}, implies that $\mathcal I$ is then also surjective onto $H^\eta_0$ and hence has an inverse $H^\eta_0 \to H^\eta_0$ which is continuous by the open mapping theorem.

\smallskip

We start with a comparison to the new operator 
\begin{equation}\label{deltafishy}
\mathcal I_\varepsilon = \Delta \mathbb I_{\theta, \varepsilon} ^* \mathbb I_{\theta, \varepsilon} 
\end{equation}
where $\mathbb I_{\theta, \varepsilon}$ is obtained as above but replacing $V=f'(u_{\theta}) \in C^{\infty, 1}$ by the corresponding locally constant potential $V^{(\varepsilon)} \in C^{1,\infty}$ from Proposition \ref{cstpert}. Notice that $V^{(\varepsilon)}=f'(\theta)\equiv -W$ on $(0,\varepsilon/2)$. The operator $\mathcal I_\varepsilon$ also maps $H^\eta$ into $H^\eta_0$ (proved just as Lemma \ref{injep0}). 

\begin{lemma}\label{epsapp}
For the operator norm from $H^\eta \to H^{\eta+2}$ and all $\varepsilon>0$ we have $$\|\mathcal I_\varepsilon - \mathcal I\|_{H^\eta \to H^{\eta+2}}\lesssim \varepsilon,$$ in particular $\mathcal I_\varepsilon - \mathcal I$ is a compact operator on $H^\eta$.
\end{lemma}
\begin{proof}
By linearity and taking suprema over smooth test functions $\psi$,
\begin{align*}
\|\Delta \mathbb I_{\theta, \varepsilon} ^* \mathbb I_{\theta, \varepsilon} - \Delta\mathbb I_\theta^* \mathbb I_\theta\|_{h^\eta \to h^{\eta+2}}& = \sup_{\|\psi\|_{h^{-\eta-2}} \le 1} \sup_{\|\phi\|_{h^\eta \le 1}} |\langle (\mathbb I_{\theta, \varepsilon} ^* \mathbb I_{\theta, \varepsilon} - \mathbb I_\theta^* \mathbb I_\theta) \phi, \Delta \psi \rangle_{L^2}| \\
&\le \sup_{\|\psi\|_{h^{-\eta-2}} \le 1} \sup_{\|\phi\|_{h^\eta \le 1}} |\langle (\mathbb I_{\theta, \varepsilon} ^*\mathbb I_{\theta, \varepsilon} -  \mathbb I_{\theta, \varepsilon} ^*\mathbb I_\theta) \phi, \Delta \psi \rangle_{L^2}| \\ &~~+  \sup_{\|\psi\|_{h^{-\eta-2}} \le 1}\sup_{\|\phi\|_{h^\eta \le 1}} |\langle (\mathbb I_{\theta, \varepsilon}^*\mathbb I_\theta - \mathbb I_\theta^*\mathbb I_\theta)  \phi, \Delta \psi \rangle_{L^2}| \\
&= \sup_{\|\psi\|_{h^{-\eta-2}} \le 1} \sup_{\|\phi\|_{h^\eta \le 1}} |\langle (\mathbb I_{\theta, \varepsilon} -  \mathbb I_\theta) \phi, \mathbb I_{\theta, \varepsilon} \Delta \psi \rangle_{L^2(\mathcal X)}| \\ &~~+  \sup_{\|\psi\|_{h^{-\eta-2}} \le 1}\sup_{\|\phi\|_{h^\eta \le 1}} |\langle  \mathbb I_\theta \phi, (\mathbb I_\theta - \mathbb I_{\theta, \varepsilon}) \Delta \psi \rangle_{L^2(\mathcal X)}| 
\end{align*} 
The term inside the supremum of the first summand in the last equation is upper bounded, using the definition of the $h^\eta$ norms, the Cauchy-Schwarz inequality, the second part of Proposition \ref{cstpert} as well as (\ref{inhom}) with $m=0$, by 
\begin{align*}
&\Big(\int_0^1 \|U_\phi(t) - U_\phi^{(\varepsilon)}(t)\|^2_{h^{\eta+3}}dt\Big)^{1/2} \Big(\int_0^1 \|U^{(\varepsilon)}_{\Delta \psi}(t)\|^2_{h^{-\eta-3}}dt\Big)^{1/2} \lesssim  \varepsilon \|\phi\|_{h^\eta} \|\Delta \psi\|_{h^{-\eta-4}}
\end{align*}
By similar arguments the second term is bounded by
$$\Big(\int_0^1 \|U_{\Delta\psi}(t) - U_{\Delta \psi}^{(\varepsilon)}(t)\|^2_{h^{-\eta-1}}dt \Big)^{1/2} \Big(\int_0^1\|U_{\phi}(t)\|^2_{h^{\eta+1}}dt\Big)^{1/2} \lesssim \varepsilon  \|\phi\|_{h^\eta} \|\Delta \psi\|_{h^{-\eta-4}},$$ so the result follows since $\|\Delta \psi\|_{h^{-\eta-4}} \lesssim \|\psi\|_{h^{-\eta-2}} \le 1$.
\end{proof}

Writing $\mathcal I = \mathcal I_{\varepsilon} + \mathcal I - \mathcal I_{\varepsilon}$ we conclude that it suffices to prove that $\mathcal I_\varepsilon$ equals half the identity operator plus a compact perturbation. To achieve this we first derive a more explicit representation of the information operator $\mathbb I_{\theta, \varepsilon}^*\mathbb I_{\theta, \varepsilon}$. For $h \in H^\eta$ we can use (\ref{ubahr}) and write the solutions of (\ref{weakpdelin}) for $m=0$ and potential $W=-f'(\theta)$ as 
\begin{equation}\label{greenkern}
\mathbb I_{\theta, \varepsilon}(h) (t, \cdot) = U^{(\varepsilon)}_h(t) = \int_\Omega p_{t,W}(\cdot,y) h(y)dy,~~0<t<\varepsilon/2.
\end{equation}
As the Fredholm alternative only requires the perturbation to be compact and not small, we could in principle choose $\varepsilon = 4$ in which case the preceding representation holds on the time horizon $[0,2]$, and the proof that follows can be simplified by discarding the term involving integrals over $[\varepsilon/4, 1]$. But anticipating Remark \ref{clever}, we proceed with arbitrary but fixed $\varepsilon>0$.

For any $t>0$, Proposition \ref{roundrobinlin} implies that the linear operators $h \mapsto U_h^{(\varepsilon)}(t, \cdot)$ for $t$ fixed are continuous from $L^2(\Omega) \to L^2(\Omega)$ with operator norms uniformly bounded in $t$. Therefore they have continuous linear adjoint operators $$U_{t,\varepsilon}^*: L^2(\Omega) \to L^2(\Omega)$$ with operator norms bounded also uniformly in $0\le t \le 1$. Thus for any $h_1 \in L^2, h_2 \in H^\eta$ we have, using also Parseval's identity for the  basis $\{e_{j,W}\}$, and Fubini's theorem,
\begin{align}\label{infoopgreen}
&\langle h_1, \mathbb I_{\theta, \varepsilon}^*\mathbb I_{\theta, \varepsilon} h_2\rangle_{L^2(\Omega)}\\
&=\langle \mathbb I_{\theta, \varepsilon} h_1, \mathbb I_{\theta, \varepsilon} h_2 \rangle_{L^2([0,1],L^2(\Omega))} =\int_0^1 \langle U^{(\varepsilon)}_{h_1}(t), U^{(\varepsilon)}_{h_2}(t) \rangle_{L^2(\Omega)}dt \notag \\
&=\int_0^{\varepsilon/4} \sum_{j} e^{-2t\lambda_{j,W}} \langle e_{j,W}, h_1 \rangle_{L^2} \langle e_{j,W}, h_2 \rangle_{L^2} dt +  \Big\langle h_1,   \int_{\varepsilon/4}^1 U_{t, \varepsilon}^* [U_{h_2}^{(\varepsilon)}(t)] dt\Big\rangle_{L^2}. \notag
\end{align}
For the first term we use again (\ref{ubahr}) and $2t \le \varepsilon/2$ to recognise
$$U^{(\varepsilon)}_{h_2}(2t,\cdot) = \sum_{j}e^{-2t\lambda_{j,W}}\langle e_{j,W}, h_2 \rangle_{L^2} e_{j,W}$$ and this series converges uniformly on $[0,\varepsilon/4] \times \Omega$ in view of (\ref{uniflim}), $h_2 \in H^\eta \subset H^2$ and (\ref{tildeev}). Therefore by the dominated convergence and Fubini's theorem
\begin{align*}
& \int_0^{\varepsilon/4} \sum_j e^{-2t\lambda_{j,W}} \langle e_{j,W}, h_2 \rangle_{L^2}  \int_\Omega e_{j,W}(y) h_1(y) dy = \int_\Omega h_1(y) \int_0^{\varepsilon/4} U^{(\varepsilon)}_{h_2}(2t,y) dtdy.
\end{align*}
and hence we can write (\ref{infoopgreen}), for all $h_1 \in L^2, h_2 \in H^\eta$, as
 $$\langle h_1, \mathbb I_{\theta, \varepsilon}^*\mathbb I_{\theta, \varepsilon} h_2\rangle_{L^2(\Omega)} = \Big\langle h_1,  \int_0^{\varepsilon/4} U^{(\varepsilon)}_{h_2}(2t,\cdot) dt + \int_{\varepsilon/4}^1  U_{t, \varepsilon}^* [U_{h_2}^{(\varepsilon)}(t)]  \Big\rangle_{L^2}.$$
In summary, the action of the operator $\mathcal I_\varepsilon= \Delta\mathbb I_{\theta, \varepsilon}^* \mathbb I_{\theta, \varepsilon}$ on $H^\eta$ can be represented as
\begin{align}\label{keyrep}
\mathcal I_{\varepsilon} h & =\Delta \int_0^{\varepsilon/4} U^{(\varepsilon)}_h(2t,\cdot) dt + \Delta \int_{\varepsilon/4}^1  U_{t, \varepsilon}^* [U_{h}^{(\varepsilon)}(t)] dt = A(h)+B(h).
\end{align}
For the second term we use (\ref{dual}), Fubini's theorem, Lemma \ref{bootcamp} and Proposition \ref{roundrobinlin} in
\begin{align*}
\|B(h)\|_{H^{\eta+1}} &= \sup_{\psi \in C^\infty: \|\psi\|_{H^{-\eta-1}}\le 1} \Big|\big\langle \psi, \Delta\int_{\varepsilon/4}^1  U_{t, \varepsilon}^* [U_h^{(\varepsilon)}(t)] dt \big\rangle_{L^2}\Big| \\
& =  \sup_{\psi \in C^\infty: \|\psi\|_{H^{-\eta-1}}\le 1}\Big |\int_{\varepsilon/4}^1 \langle \Delta \psi, U_{t, \varepsilon}^* [U_h^{(\varepsilon)}(t)]  \rangle_{L^2} dt\Big | \\
& =  \sup_{\psi \in C^\infty: \|\psi\|_{H^{-\eta-1}}\le 1}\Big|\int_{\varepsilon/4}^1 \langle U_{\Delta \psi}^{(\varepsilon)}(t), U_h^{(\varepsilon)}(t)  \rangle_{L^2} dt \Big|\\
&\lesssim \sup_{t \in [\varepsilon/4,1], \psi \in C^\infty: \|\psi\|_{H^{-\eta-3}}\le c} \|U_{\psi}(t)\|_{H^{-\eta}} \|U_h(t)\|_{H^{\eta}} \lesssim \|h\|_{H^\eta},
\end{align*}
for some $c>0$, so that we conclude that $B$ is a compact operator on $H^\eta$.

About term $A$: For $h \in H^\eta, \eta>2+d/2,$ the functions $\Delta U^{(\varepsilon)}_h$ are uniformly bounded on $[0,\varepsilon/4] \times \Omega$ by Proposition \ref{roundrobinlin} and the Sobolev imbedding, and so we can take $\Delta$ inside of the integral, and likewise $U^{(\varepsilon)}_h(\cdot,x)$ is absolutely continuous on $[0,\varepsilon/4]$ for every $x \in \Omega$. Using (\ref{PDElinW}) with our $W=f'(\theta) \in C^\infty$ and the fundamental theorem of calculus for Lebesgue integrals (p.106 in \cite{Fo99}), we thereupon obtain
\begin{align*}
A(h) &=  \frac{1}{2}\int_0^{\varepsilon/2} \Delta U^{(\varepsilon)}_h(t,\cdot) dt = \frac{1}{2}\Big[\int_0^{\varepsilon/2} \frac{\partial}{\partial t}  U^{(\varepsilon)}_h(t,\cdot) dt +\int_0^{{\varepsilon/2}}  W U^{(\varepsilon)}_h(t,\cdot) dt \Big]  \\
&=-\frac{1}{2}Id(h) +  \frac{1}{2}U^{(\varepsilon)}_h(\varepsilon/2) + \frac{1}{2}\int_0^{\varepsilon/4}  W U_h(2t,\cdot) dt
\end{align*}
where $Id$ is the identity operator. By Proposition \ref{fwdsmooth} the second term maps $h \in H^\eta$ linearly and continuously into $H^{\eta+1}$ for every fixed $\varepsilon>0$. The same is true for the third time since its $H^{\eta+1}$-norms can be bounded, using the Cauchy-Schwarz and Minkowski's integral inequality, (\ref{multbasic}) as well as Proposition \ref{roundrobinlin} by
$$c \|W\|_{C^{\eta+1}} \Big(\int_0^{1}\|U_h(t)\|_{H^{\eta+1}}^2dt\Big)^{1/2} \lesssim \|h\|_{H^\eta}$$ for some $c=c(\eta,T)<\infty$. Hence these terms are also compact linear operators on $H^\eta$. 

In summary, we have shown that $-\mathcal I = \frac{Id}{2} +K$ on $H^\eta$ for a compact operator $K: H^\eta \to H^\eta$. To show that $K$ in fact maps $H^\eta_0$ compactly into $H^\eta_0$, let $h_n$ be a bounded sequence in $H^\eta_0$. Then $\int_\Omega \mathcal I h_n =0$ for all $n$ by Lemma \ref{injep0}, hence $$\int_\Omega K(h_n) = -\int_\Omega \mathcal I h_n - \frac{1}{2}\int_\Omega Id(h_n) =0 -0 =0,$$ so $K(h_n) \in H^\eta_0$, and by compactness $K(h_n)$ has a $\|\cdot\|_{H^\eta}$-convergent subsequence. As $H^\eta_0$ is a closed subspace of $H^\eta$ we see that $K$ maps compactly into $H^\eta_0$.
\end{proof}

\begin{remark}\label{clever}\normalfont
Unlike in the proof of the injectivity result Lemma \ref{injep0} (via Lemma \ref{paristexas}), the proof of surjectivity of the operator $\mathcal I$ does not require Proposition \ref{cstpert} for $\varepsilon$ sufficiently \textit{small}, since \textit{any} bounded perturbation of the potential $V$ results in a compact perturbation of $\mathcal I$ via Lemma \ref{epsapp}. But for $\varepsilon \to 0$ our proof provides some further qualitative understanding of the structure of the information operator, which decomposes as
\begin{equation}\label{infoapprox}
\mathbb I_\theta^* \mathbb I_\theta = \mathbb I_{\theta, \varepsilon}^* \mathbb I_{\theta, \varepsilon}  + \varepsilon K_2 = -\frac{1}{2}\Delta^{-1} +K_{1,\varepsilon} +\varepsilon  K_2  ~~\text{any } \varepsilon>0,
\end{equation}
 where $K_2$ is smoothing of order $3$ and $K_{1,\varepsilon}$ is infinitely smoothing for any fixed $\varepsilon$. The operator $\mathbb I_{\theta, \varepsilon}^* \mathbb I_{\theta, \varepsilon}$ corresponds to the case where the infinitesimal behaviour of the reaction-diffusion system at the beginning of time $t\in(0,\varepsilon/2)$ is described by equation (\ref{ubahr}) with Schr\"odinger operator $\mathscr S=\Delta - f'(\theta)$. On the eigen-spaces of $\mathscr S$ with negative eigenvalues $-\lambda_{j,W}<0$, the dynamics instantly smooth the initial condition, but on the null space of $\mathscr S$ the system `stalls' and $\mathbb I_{\theta, \varepsilon}^* \mathbb I_{\theta, \varepsilon}$ equals the identity operator, reproducing a `direct' regression model where non-existence results \cite{F99} for Bernstein-von Mises theorems in infinite dimensions would apply. A deeper reason for the possibility of Theorem \ref{ganzwien} holding in the strong topology of $\mathscr C$ is the fact that the kernel of $\mathscr S$ is at most finite-dimensional (Lemma \ref{hooray}), and that $\mathbb I_\theta^* \mathbb I_\theta$ is, in view of (\ref{infoapprox}), approximated with arbitrary precision as $\varepsilon \to 0$ by the information operator of a system with this property. This remark applies equally to the (in view of (\ref{tildeev}) and (\ref{weyl})) finite-dimensional eigenspaces corresponding to positive eigenvalues $-\lambda_{j,W}>0$.
\end{remark}

\textbf{Acknowledgement.} The author thanks Dimitri Konen for pointing out various minor but important corrections to the proofs in the first version, as well as a Aur\'elien Castre and an anonymous referee for helpful remarks and suggestions. RN further gratefully acknowledges support through an ERC Advanced Grant (UKRI G116786) as well as by EPSRC programme grant EP/V026259.

\bibliography{react}{}
\bibliographystyle{plain}

\bigskip

\textsc{Department of Pure Mathematics \& Mathematical Statistics}

\textsc{University of Cambridge}, Cambridge, UK

Email: nickl@maths.cam.ac.uk

\end{document}